\documentclass[10pt,reqno,a4paper]{amsart}
\usepackage{bm}
\usepackage{mathrsfs}
\usepackage{amsmath}
\usepackage{amsfonts}
\usepackage{mathtools}
\usepackage{amssymb}
\usepackage{xcolor}
\usepackage{graphicx}
\usepackage{hyperref}
\usepackage{cleveref}
\usepackage{float}
\usepackage{enumitem}
\setlist[enumerate,1]{label=\textup{(\arabic*)}}

\hypersetup{
	colorlinks=true,
	linkcolor=blue!50!black,      
	citecolor=green!70!black,
    urlcolor=cyan!50!blue,
}

\newtheorem{theorem}{Theorem}[section]
\newtheorem{corollary}[theorem]{Corollary}
\newtheorem{lemma}[theorem]{Lemma}
\newtheorem{proposition}[theorem]{Proposition}

\newtheorem{question}[theorem]{Question}

\newtheorem{remark}[theorem]{Remark}

\newtheorem{example}[theorem]{Example}
\numberwithin{equation}{section}

\def\blt{\bullet}

\def\O{\mathscr{O}}
\def\J{\mathscr{J}}
\def\I{\mathscr{I}}

\def\a{\mathfrak{a}}
\def\b{\mathfrak{b}}
\def\m{\mathfrak{m}}
\def\q{\mathfrak{q}}

\def\ol{\overline}
\def\wt{\widetilde}
\def\wh{\widehat}

\def\fk{\mathfrak}

\def\bbQ{\mathbb{Q}}
\def\bbZ{\mathbb{Z}}
\def\bbR{\mathbb{R}}
\def\bbN{\mathbb{N}}
\def\bbC{\mathbb{C}}

\DeclareMathOperator{\ord}{ord}
\DeclareMathOperator{\Spec}{Spec}
\DeclareMathOperator{\lct}{lct}
\DeclareMathOperator{\val}{val}
\DeclareMathOperator{\Val}{Val}
\DeclareMathOperator{\QM}{QM}

\newcommand*{\vb}{\ol{\nu}}
\newcommand*{\vbI}{\ol{\nu}_{I_{\blt}}}
\newcommand*{\vI}{\nu_{I_{\blt}}}
\newcommand*{\vbJ}{\ol{\nu}_{J_{\blt}}}
\newcommand*{\vJ}{\nu_{J_{\blt}}}

\begin{document}

\title[Existence of valuative interpolation]{An algebraic approach to the existence of valuative interpolation}

\author{Shijie Bao}
\address{Shijie Bao: Academy of Mathematics
	and Systems Science, Chinese Academy of Sciences, Beijing 100190, China}
\email{bsjie@amss.ac.cn}

\author{Qi'an Guan}
\address{Qi'an Guan: School of Mathematical Sciences,
	Peking University, Beijing, 100871, China}
\email{guanqian@math.pku.edu.cn}

\author{Zhitong Mi}
\address{Zhitong Mi: School of Mathematics and Statistics, Beijing Jiaotong University, Beijing, 100044, China}
\email{zhitongmi@amss.ac.cn}

\author{Zheng Yuan}
\address{Zheng Yuan: State Key Laboratory of Mathematical Sciences, Academy of Mathematics
	and Systems Science, Chinese Academy of Sciences, Beijing 100190, China}
\email{yuanzheng@amss.ac.cn}

\subjclass{13A18, 14F18, 32S05}

\keywords{Valuation, asymptotic Samuel function, filtration, multiplier ideal}

\date{\today}

\begin{abstract}
    An algebraic approach is presented for the valuative interpolation problem, which recovers and generalizes prior characterizations known in the complex analytic setting by the authors. We use the asymptotic Samuel function to give the characterization of the existence of valuative interpolation. We also give a characterization of the existence in the infinite valuative interpolation problem.
\end{abstract}


\maketitle

\setcounter{tocdepth}{1}
\tableofcontents
\section{Introduction}

Let $R$ be an excellent regular domain of equicharacteristic $0$ and of dimension $n$.

Let $\a_1,\ldots,\a_r$ be $r$ nonzero ideals of $R$, and let $b_1,\ldots,b_r\in\mathbb{R}_{\ge 0}$. We consider the following valuative interpolation problem which was considered by the authors in the complex analytic setting (e.g. \cite[Theorem 1.2]{BGMY25} deals with the case where $R=\O_{\bbC^n,o}$ the ring of holomorphic germs):

\begin{question}
Can one give a necessary and sufficient condition for the existence of a valuation $v$ on $R$ such that $v(\a_j)=b_j$ for $j=1,\ldots, r$?
\end{question}
    
We introduce some notations for explaining our results in the present paper. Suppose $\a$ is an ideal on $X$, and $\lambda\in\bbR_{\ge 0}\cup\{\infty\}$. Denote by $(\lambda\cdot\a)_{\blt}=\big((\lambda\cdot\a)_m\big)_{m\in\bbN}$ the \emph{filtration} (cf. \Cref{subsec-gr.fil}) of ideals such that each term:
\begin{equation*}
    (\lambda\cdot\a)_m=
    \begin{cases}
        R & \lambda=0 \\
        \a^{\lceil \lambda m\rceil} & 0<\lambda<\infty \\
        0 & \lambda=\infty \\
    \end{cases}
    \quad \quad m\in\bbN.
\end{equation*}

If $(\b_1)_{\blt},\ldots,(\b_s)_{\blt}$ are filtrations of ideals in $R$, then the summation $\fk{c}_{\blt}\coloneqq \sum(\b_i)_{\blt}$ of these filtrations is a filtration defined as:
\begin{equation*}
        \fk{c}_{m}=\sum_{\substack{m_1+\cdots+m_s=m \\ m_i\in\bbN}}(\b_1)_{m_1}\cdots(\b_s)_{m_s}, \quad m\in\bbN.
\end{equation*}

Now we return to the valuative interpolation problem. For the ideals $\a_1,\ldots,\a_r$ and non-negative real numbers $b_1,\ldots,b_r$, if all of $b_j$ are zero, then we can directly choose the trivial valuation on $R$, so we only consider the case where at least one of them is nonzero. Hence, may assume $\sum_{b_j>0}\a_j\neq R$, otherwise none of valuations $v$ on $R$ can satisfy $v(\a_j)=b_j$ for all $j$ (cf. \Cref{subsec-val}).

Consider the filtration
\[I_{\blt}\coloneqq \Big(\frac{1}{b_1}\cdot \a_{1}\Big)_{\blt}+\cdots+\Big(\frac{1}{b_r}\cdot \a_{r}\Big)_{\blt}.\]
Following Cutkosky and Peaharaj \cite[Definition 3.5]{CP24}, the \emph{asymptotic Samuel function} associated with this filtration is defined as follows. For any ideal $\a$ of $R$, let
\[\vbI(\a)\coloneqq \lim_{k\to\infty}\frac{\vI\big(\a^k\big)}{k}\in \bbR_{\ge 0}\cup\{\infty\},\]
where $\vI\big(\a\big)\coloneqq \sup\,\{m\in\bbN \mid \a\subseteq I_m\}$ denotes the \emph{order function} of $I_{\blt}$. This construction generalizes the classical asymptotic Samuel function \cite{Sa52} and serves as an algebraic analogue of the relative type introduced in complex analytic settings \cite{Ras06} (cf. \Cref{sec-c.a.c}).

\begin{theorem}\label{thm-finite.interp}
    The following three statements are equivalent.
    \begin{enumerate}
    \item There exists a valuation $v$ on $R$ such that $v(\a_j)=b_j$ for $j=1,\dots,r$.
    \item There exists a quasi-monomial valuation $v$ on $R$ such that $v(\a_j)=b_j$ for $j=1,\dots,r$.
    \item The equality $\vbI(\a_1\cdots\a_r)=\sum_{j=1}^r b_j$ holds.
    \end{enumerate}
\end{theorem}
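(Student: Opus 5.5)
(2)$\Rightarrow$(1) is trivial, so the plan is to prove (1)$\Rightarrow$(3) and (3)$\Rightarrow$(2).

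\textbf{(1)$\Rightarrow$(3).} The first step is the inequality $\vbI(\a_1\cdots\a_r)\le\sum_j b_j$ for any valuation $v$ with $v(\a_j)=b_j$. We show $v(I_m)\ge m$ for all $m$. If $\lambda_j=1/b_j$ (with $b_j=0$ giving $\lambda_j=\infty$ and the zero filtration term), then $v(\a_j^{\lceil m_j/b_j\rceil})\ge b_j\lceil m_j/b_j\rceil\ge m_j$. Each summand of $I_m$ is a product over $j$ with $\sum m_j=m$, so $v(I_m)\ge m$. Now suppose $(\a_1\cdots\a_r)^k\subseteq I_m$. Then
\[k\sum_j b_j=v\big((\a_1\cdots\a_r)^k\big)\ge v(I_m)\ge m.\]
Taking the supremum over $m$, dividing by $k$ and letting $k\to\infty$ gives the upper bound. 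We need $v$ normalized so that $v(I_m)\ge m$ is meaningful. The case where some $I_m$ equals $R$ cannot occur, since $\sum_{b_j>0}\a_j\ne R$.

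For the reverse inequality, which holds unconditionally, we argue directly. Take $m_j=\lfloor b_j k\rfloor$. Then $\lceil m_j/b_j\rceil\le k$, so $\a_j^k\subseteq\a_j^{\lceil m_j/b_j\rceil}$, and hence $(\a_1\cdots\a_r)^k\subseteq I_{\sum_j\lfloor b_jk\rfloor}$. This gives $\vbI(\a_1\cdots\a_r)\ge\sum_j b_j$. Therefore (1) forces equality, which is (3).

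\textbf{(3)$\Rightarrow$(2).} This is the main step, and it needs a valuative description of $\vbI$. I would invoke (or prove) a Rees-type formula in the spirit of Cutkosky--Peaharaj:
\[\vbI(\a)=\min_{v}\frac{v(\a)}{v(I_\blt)},\qquad v(I_\blt)\coloneqq\lim_m \frac{v(I_m)}{m}.\]
Here the minimum runs over finitely many divisorial valuations, for instance the Rees valuations of a suitable Noetherian approximation, or the valuations computed on a log resolution of $\a_1\cdots\a_r$. Normalize each such $v$ so that $v(I_\blt)=1$. A direct computation gives $v(I_\blt)=\min_{b_j>0}v(\a_j)/b_j$, so the normalization means $v(\a_j)\ge b_j$ for every $j$ with $b_j>0$, with equality for at least one $j$. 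Condition (3) then says $\sum_j v(\a_j)=\sum_j b_j$ for the minimizing $v$. Combined with $v(\a_j)\ge b_j$, this forces $v(\a_j)=b_j$ for every $j$ with $b_j>0$, and also $v(\a_j)=0$ whenever $b_j=0$. Such a $v$ is divisorial, hence quasi-monomial, which gives (2).

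The obstacle is establishing that the minimum is attained by a single valuation, rather than only approached as an infimum. I would take a log resolution $\pi\colon Y\to\Spec R$ principalizing all the $\a_j$; this exists by Hironaka, since $R$ is excellent of equicharacteristic $0$. On $Y$, $\a_j\O_Y=\O_Y(-D_j)$ with $D_j$ an SNC divisor. Both $\vbI$ and the function $v\mapsto v(\a_j)$ then become piecewise-linear conditions on quasi-monomial valuations in the dual complex. The problem of minimizing $\sum_j v(\a_j)$ subject to $\min_j v(\a_j)/b_j=1$ becomes a linear program on each simplex (cone) of the dual complex. Its minimum is attained at some quasi-monomial point, and if the $b_j$ are irrational it may not be divisorial. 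That is exactly why the statement says quasi-monomial. The remaining work is to show that $\vbI$ equals this LP minimum. I would prove this by computing $I_m\O_Y$ up to integral closure: its integral closure is the pushforward of the ideal generated by $\min_j$-type combinations of the $D_j$. Then I would use Izumi/Rees-type comparison to pass from integral closures to the limit.
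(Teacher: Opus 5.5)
Your proof of (1)$\Rightarrow$(3) is correct. It is the paper's \Cref{prop-v(aj)=bj.implies.mu=bj-1}, except that you get the lower bound directly via $m_j=\lfloor b_jk\rfloor$. Your skeleton for (3)$\Rightarrow$(2) also matches \Cref{lem-finite.val.interp.exist.QM} and the paper's last step: normalize $v(I_\blt)=\min_{b_j>0}v(\a_j)/b_j=1$, minimize $\sum_j v(\a_j)$ over $\QM(Y,D)$, and squeeze.

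The gap is the step everything rests on, the identity $\vb(\a;I_\blt)=\inf_v v(\a)/v(I_\blt)$. Only "$\le$" is formal (\Cref{lem-v(J.).le.mu.v(a)}). The reverse inequality fails for general filtrations: $J_k=\m^k(x^k,y)$ in $\bbC[x,y]$ has $\vb(\m;J_\blt)=1/2$, while $v(J_\blt)=v(\m)$ for every $v$, so $\inf_v v(\m)/v(J_\blt)=1$ (\Cref{rem-vb.neq.sigma}). So you cannot invoke it as a Cutkosky--Peaharaj/Rees-type fact. It has to come from the specific shape of $I_\blt$, and your sketch ("$\min_j$-type combinations", "Izumi/Rees-type comparison") never says which property of $I_\blt$ does the work.

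The version you state first is also false: a minimum over finitely many divisorial valuations, whose minimizer is then divisorial. Take $\a_1=(x)$, $\a_2=(y)$, $b_1=1$, $b_2=\sqrt2$. Condition (3) holds, but the only valuation with $v(x)=1$, $v(y)=\sqrt2$ is the irrational monomial one. Your next paragraph notices this, but the two claims contradict each other.

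There are two ways to close the gap. The paper proves $\vb(\cdot\,;I_\blt)=\rho(\cdot\,;I_\blt)$ (\Cref{lem-finite.inter.case.rho.-1.equal.sigma}). It compares $I_\blt$ with powers of $\b=\sum\a_j^{d_j}$, where $d_j/N\approx 1/b_j$, and applies Skoda's theorem to asymptotic multiplier ideals. It then uses the Jonsson--Musta\c{t}\u{a} description $\rho=\inf_v v(\cdot)/v(I_\blt)$ (\Cref{prop-sigma.equal.inf}).

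Your integral-closure route also works, more cheaply, once you add the missing uniform estimate. For every valuation $v$, choose $j_0$ with $b_{j_0}>0$ and $v(\a_{j_0})/b_{j_0}=v(I_\blt)$, and put all of $m$ on that index. This gives
\[v(I_m)\le \lceil m/b_{j_0}\rceil\, v(\a_{j_0})\le (m+B)\, v(I_\blt),\qquad B=\max_j b_j .\]
Set $\sigma=\inf_v v(\a)/v(I_\blt)$. Whenever $m+B\le l\sigma$, you get $v(\a^l)\ge v(I_m)$ for all $v$, including those with $v(I_\blt)=0$. The valuative criterion then gives $\a^l\subseteq \ol{I_m}$, so $\vb(\a;\ol{I}_\blt)\ge\sigma$. \Cref{prop-int.cl} transfers this to $I_\blt$; this is the algebraic twin of \Cref{lem-analytic.case.rho-1.equal.sigma}.

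The key property is that the defect $v(I_m)-m\,v(I_\blt)$ is $O(v(I_\blt))$ uniformly in $v$. This is exactly what fails for $J_k=\m^k(x^k,y)$, where the defect is $\min(k\,v(x),v(y))$. Without it, your "passing from integral closures to the limit" is not justified.
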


In fact, the equivalence of the first two statements, along with the equivalence of the existence of \emph{Zhou valuations} (cf. \cite{BGMY24,BGZ25}) satisfying the same interpolation conditions, has been proved in \cite[Appendix A.1]{BGMY25}.

In addition, for the infinite valuative interpolation problem, we apply the method of valuation approximation to prove the following result in the present paper.

\begin{theorem}\label{thm-infinite.interp}
    Additionally assume $R$ is local with the unique maximal ideal $\m$. Let $(\a_j)_{j\in\bbZ_+}$ be a countable sequence of nonzero ideals in $R$ such that $\sqrt{\sum_{j=1}^{\infty} \a_j}=\m$, and let $(b_j)_{j\in\bbZ_+}$ be a sequence of positive real numbers. Then the following statements are equivalent:
    \begin{enumerate}
        \item There exists a valuation $v$ on $R$ centered at $\m$ with $A(v)<\infty$ such that $v(\a_j)=b_j$ for all $j\ge 1$;
        \item For every $r\ge 1$,
    \[\vb_{I_{\blt}^{(r)}}\big(\a_1\cdots\a_r\big)=\sum_{j=1}^r b_j,\]
    and it holds that
    \[\sup_{r\ge 1}\sup_{k\in\bbN}\Big(\lct^{\a_1^k\cdots\a_r^k}\big( I^{(r)}_{\blt}\big)-k\sum_{j=1}^r b_j\Big)<\infty.\]
    \end{enumerate}
    Here $I^{(r)}_{\blt}=\sum_{j=1}^{r}\big(\frac{1}{b_j}\cdot \a_j\big)_{\blt}$.
\end{theorem}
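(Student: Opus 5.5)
The plan is to prove Theorem~\ref{thm-infinite.interp} by reducing, for each finite $r$, to the finite problem of Theorem~\ref{thm-finite.interp}, and then passing to the limit $r\to\infty$ with a compactness argument on the space of valuations of bounded log discrepancy. In this argument $\lct^{\b}(I_\blt)$ denotes the mixed (jumping-number type) log canonical threshold
\[\lct^{\b}(I_\blt)=\inf_{v}\frac{A(v)+v(\b)}{v(I_\blt)},\]
where $v$ ranges over valuations centered at $\m$ and $v(I_\blt)=\lim_m v(I_m)/m$.

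\textbf{Step 1: $(1)\Rightarrow(2)$.} Suppose $v$ is centered at $\m$, has $A(v)<\infty$, and satisfies $v(\a_j)=b_j$ for all $j$.
\begin{itemize}
\item Restricting to $j\le r$, Theorem~\ref{thm-finite.interp} gives the first condition for every $r$.
\item For the second condition, note that $v\big((\tfrac1{b_j}\cdot\a_j)_m\big)\ge b_j\lceil m/b_j\rceil\ge m$. Hence $v(I^{(r)}_m)\ge m$, so $v(I^{(r)}_\blt)\ge 1$.
\item Plugging $v$ into the infimum gives
\[\lct^{\a_1^k\cdots\a_r^k}(I^{(r)}_\blt)\le A(v)+k\sum_{j\le r}b_j.\]
So the double supremum is bounded by $A(v)$.
\end{itemize}

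\textbf{Step 2: $(2)\Rightarrow(1)$, finite stage.} Fix $r$.
\begin{itemize}
\item Apply Theorem~\ref{thm-finite.interp}, or rather its proof, which presumably produces a quasi-monomial valuation $w$ computing $\vb_{I^{(r)}_\blt}(\a_1\cdots\a_r)$. Normalize it so that $w(I^{(r)}_\blt)=1$ and $w(\a_1\cdots\a_r)=\sum_{j\le r} b_j$.
\item Since $w(\a_j)\le b_j$ for each $j$ (from $w(I^{(r)})=1$), equality of the sum forces $w(\a_j)=b_j$ for all $j\le r$.
\item We need these $w=v_r$ with $A(v_r)$ bounded uniformly in $r$. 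I would use the lct hypothesis: let $C$ be the supremum, and choose $k$ large. A valuation $u_{r,k}$ nearly computing $\lct^{\a^k}(I^{(r)}_\blt)$, normalized by $u(I^{(r)})=1$, satisfies
\[A(u)+k\,u(\a_1\cdots\a_r)\le C+k\sum_{j\le r} b_j+\varepsilon.\]
Since $u(\a_1\cdots\a_r)\ge\sum_{j\le r} b_j$ is what we want to force, the trick is that $u(\a_1\cdots\a_r)\le\sum_{j\le r} b_j$ always holds, by the definition of $\vb$ and condition (2).
\item Rearranging gives $A(u)\le C+\varepsilon+k\big(\sum_{j\le r} b_j-u(\a_1\cdots\a_r)\big)$. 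As $k\to\infty$, the deficit $\sum_{j\le r} b_j-u(\a_1\cdots\a_r)$ must go to zero, while $A(u)$ stays bounded for each fixed $k$.
\item Pass to a limit in $k$ using compactness of $\{A\le C'\}$ normalized at $\m$ (Jonsson--Mustaţă style), together with lower semicontinuity of $A$.
\end{itemize}
This should yield a valuation $v_r$ with $A(v_r)\le C+1$, $v_r(I^{(r)})=1$, and $v_r(\a_j)=b_j$ for $j\le r$. Here $v_r(\a_j)\ge b_j$ on a limit needs semicontinuity, and $\le b_j$ comes from $v_r(I^{(r)})\le 1$ in the limit.

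\textbf{Step 3: limit in $r$.} The hypothesis $\sqrt{\sum\a_j}=\m$ gives some $r_0$ with $\m^N\subseteq\sum_{j\le r_0}\a_j$. This bounds $v_r(\m)$ above and below uniformly for $r\ge r_0$; the lower bound uses Izumi-type control from $A(v_r)\le C+1$ together with $v_r(\a_1)=b_1>0$. Hence the $v_r$ lie in a compact set of valuations centered at $\m$ with $A\le C+1$. Take a limit point $v$. The functions $v\mapsto v(\a_j)$ are continuous on this space, being finitely generated ideal evaluations. Therefore $v(\a_j)=b_j$ for all $j$, and $A(v)\le C+1$ by lower semicontinuity.

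\textbf{Main obstacle.} The main obstacle is Step~2: producing, for each $r$, interpolating valuations with log discrepancy bounded independently of $r$. This requires carefully justifying that limits of near-minimizers preserve the exact values $b_j$, and that the lct quantity is realized by valuations normalized on $I^{(r)}_\blt$. I would first try to carry out the $k\to\infty$ limit inside the compact set $\{A\le C+1,\ v(\m)\in[c_1,c_2]\}$.
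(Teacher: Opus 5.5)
Your architecture is the paper's: nearly compute $\lct(\a_1^k\cdots\a_r^k;I^{(r)}_\blt)$ by valuations normalized by $u(I^{(r)}_\blt)=1$, bound their log discrepancies by the supremum $C$, and pass to a limit first in $k$, then in $r$, by compactness. However, the step you flag as the main obstacle is carried out with the decisive inequality reversed, and as written it does not give the uniform bound on $A$.

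It is false that $u(\a_1\cdots\a_r)\le\sum_{j\le r}b_j$ always holds: nothing prevents $u(\a_j)/b_j$ from being large for the indices that do not realize the minimum in $u(I^{(r)}_\blt)$. What is automatic is the opposite inequality. By \Cref{lem-v(J.).le.mu.v(a)} and (2), every valuation with $u(I^{(r)}_\blt)=1$ satisfies $u(\fk{c}_r)\ge\vb(\fk{c}_r;I^{(r)}_\blt)=\sum_{j\le r}b_j$, where $\fk{c}_r=\a_1\cdots\a_r$.

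With your sign the ``deficit'' is nonnegative. Then $A(u)\le C+\varepsilon+k\cdot(\text{deficit})$ is not uniform in $k$, nothing forces the deficit to $0$, and $A(v_r)\le C+1$ does not follow. The same reversal occurs twice more. First, $w(I^{(r)}_\blt)=1$ gives $w(\a_j)\ge b_j$ by \Cref{lem-v(I.)}, not $\le$. Second, in the limit, $v_r(I^{(r)}_\blt)\le 1$ only says $\min_j v_r(\a_j)/b_j\le 1$, which cannot give $v_r(\a_j)\le b_j$ for every $j$.

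With the correct direction the obstacle disappears. From $A(u_k)+k\,u_k(\fk{c}_r)\le C+k\sum_{j\le r}b_j+\varepsilon$ and $u_k(\fk{c}_r)\ge\sum_{j\le r}b_j$ you get $A(u_k)\le C+\varepsilon$ and $0\le u_k(\fk{c}_r)-\sum_{j\le r}b_j\le (C+\varepsilon)/k$, uniformly in $r$. For $r\ge r_0$, the inclusion $\m^p\subseteq I^{(r)}_1$ together with $u_k(I^{(r)}_\blt)=1$ gives $u_k(\m)\ge 1/p$; this, not an Izumi estimate, is the lower bound you need. A limit point $v_r$ then satisfies:
\begin{itemize}
\item $A(v_r)\le C+\varepsilon$;
\item $v_r(\fk{c}_r)=\sum_{j\le r}b_j$;
\item $v_r(I^{(r)}_\blt)=1$, by \cite[Corollary 6.4]{JM12}.
\end{itemize}
Hence $v_r(\a_j)\ge b_j$ for all $j\le r$, and summing forces $v_r(\a_j)=b_j$. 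Your Step 3 then works as stated.

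Corrected in this way, your Step 2 is \Cref{prop-val.interp.gr.seq} applied to $I^{(r)}_\blt$, with two harmless differences. You use near-minimizers instead of the computing valuations of \Cref{thm-compute}. And because you bound $u_k(\fk{c}_r)$ below through $\vb$ directly, you avoid the paper's detour through $\rho$: you do not need the Skoda-based \Cref{lem-finite.inter.case.rho.-1.equal.sigma} to know $\rho(\fk{c}_r;I^{(r)}_\blt)=\sum_{j\le r}b_j$. The quasi-monomial $w$ from \Cref{thm-finite.interp} in your first bullet plays no role and can be dropped.
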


\vspace{.1in} {\bf Organization}. This paper is organized as follows. In \Cref{sec-r.o}, we recall the notion of asymptotic Samuel function and quickly prove the implication (1) $\Rightarrow$ (3) in \Cref{thm-finite.interp}. In \Cref{sec-f.v.i}, we prove \Cref{thm-finite.interp} by studying the asymptotic behavior of jumping numbers and an extremal problem on a cone $\QM(Y,D)$ of quasi-monomial valuations. In \Cref{sec-i.v.i}, we prove \Cref{thm-infinite.interp} using valuation approximation, a strategy that more closely parallels the approach in \cite{BGMY25}. Finally, we demonstrate how \Cref{thm-finite.interp} recovers a result in \cite{BGMY25} (which deals with the complex analytic case) and yields an analytic characterization, derived from \Cref{thm-infinite.interp}, for the existence of infinite valuative interpolation.

\vspace{.1in} {\em Acknowledgements}. The first-named author completed this work during a visit to the School of Mathematical Sciences at Peking University and is grateful for its hospitality and support. The second-named author was supported by National Key R\&D Program of China 2021YFA1003100 and NSFC-12425101. The third-named author was supported by NSFC-12401099 and the Talent Fund of Beijing Jiaotong University 2024-004. The fourth-named author was supported by NSFC-12501106.

\section{Valuation and asymptotic Samuel function}\label{sec-r.o}

In this section, $R$ is a noetherian integral domain.

\subsection{Valuation}\label{subsec-val}

Recall that a function $v\colon R^*=R\setminus\{0\} \to \bbR_{\ge 0}$ is a (real) \emph{valuation} if
\[v(xy)=v(x)+v(y) \quad \& \quad v(x+y)\ge \min\{v(x),v(y)\},\]
for all $x,y\in R^*$. Set $v(0)=\infty$. A valuation $v$ on $R$ can be extended uniquely to a valuation $v\colon K\to \bbR\cup\{\infty\}$ on the field of fractions $K$ of $R$ by setting $v(x/y)=v(x)-v(y)$. Say $v\le w$ for two valuations $v,w$ on $R$ if $v(x)\le w(x)$ for all $x\in R^*$. For any nonzero ideal $\a$ of $R$, define $v(\a)=\inf_{x\in \a}v(x)$. Then it is easy to check: for nonzero ideals $\a,\b$ in $R$,
\[v(\a\b)=v(\a)+v(\b) \quad \& \quad v(\a+\b)= \min\{v(\a),v(\b)\}.\]
In particular, we can see if the ideals $\a_1,\ldots,\a_r$ in $R$ satisfies $\sum_{j=1}^r\a_j=R$, then
\[0=v(R)=v\big(\sum \a_j\big)=\min v(\a_j),\]
which implies $v(\a_j)=0$ for some $j\in\{1,\ldots,r\}$.

\subsection{Graded sequence and filtration of ideals}\label{subsec-gr.fil}

A \emph{graded sequence} of ideals $\a_{\blt}=(\a_m)_{m\in\bbZ_{>0}}$ is a sequence of ideals in $R$ that satisfies $\a_p\cdot \a_p\subseteq \a_{p+q}$ for all $p,q\ge 1$. Put $\a_0=R$. We always assume $\a_m\neq (0)$ for some $m\ge 1$. For example, $v$ is a nontrivial valuation on $R$, then $\a_{\blt}^v$ is a graded sequence of ideals in $R$ defined by $\a_{m}^v=\{x\in R \mid v(x)\ge m\}$. 

Now let $v$ be a valuation on $R$, and $\a_{\blt}$ a graded sequence of ideals in $R$. Then $v(\a_{p+q})\le v(\a_p)+v(\a_q)$ for all $p,q\ge 1$, which implies the limit
\[v(\a_{\blt})\coloneqq \lim_{m\to \infty, \, \a_m\neq (0)}\frac{v(\a_m)}{m}=\inf_{m\ge 1}\frac{v(\a_m)}{m}\]
exists by Fekete's lemma. In particular, $v(\a_{\blt}^v)=1$ for any nontrivial valuation $v$ on $R$. A graded sequence $\a_{\blt}=\{\a_m\}_{m\in\bbN}$ of ideals in $R$ is called a \emph{filtration} if additionally it satisfies $\a_m\supseteq \a_{m+1}$ for every $m\in\bbN$.

\subsection{Asymptotic Samuel function}

The classical asymptotic Samuel function (\cite{Sa52}) has been generalized to the filtration case in \cite{CP24}. Let us recall the constructions and some basic properties. Let $J_{\blt}$ be a filtration of ideals in $R$. For every ideal $\a$ in $R$, the \emph{order} of $\a$ with respect to $J_{\blt}$ is defined by
\[\vJ\big(\a\big)\coloneqq \sup\,\{m\in\bbN \mid \a\subseteq J_m\}\in\bbN\cup\{\infty\}.\]
Then we can see $\vJ(\a^{p+q})\ge \vJ(\a^p)+\vJ(\a^q)$ for every $p,q\in\bbN$. Therefore, it follows that
\[\vb(\a;J_{\blt})\coloneqq \lim_{m\to\infty}\frac{\vJ(\a^m)}{m}=\sup_{m\ge 1}\frac{\vJ(\a^m)}{m}\in \bbR_{\ge 0}\cup \{\infty\}\]
is well-defined by Fekete's lemma, called the \emph{asymptotic Samuel function} of $J_{\blt}$ (cf. \cite[Theorem 3.4]{CP24}). We also denote $\vb(\a;J_{\blt})\coloneqq \vbJ(\a)$ in the present paper.

\begin{example}\label{ex-asf}
    If $J_{\blt}=\{J^m\}_{m\ge 0}$ for a nonzero ideal $J$ in $R$, then $\vb(\a;J_{\blt})=\ol{\nu}_{J}(\a)$, which is the classical asymptotic Samuel function (cf. \cite{Sa52,HS06,LJT09}).
\end{example}

\begin{example}
    If we take $J_{\blt}=\a_{\blt}^v$ the filtration of valuation ideals associated to some valuation $v$ on $R$, then clearly $\vb(\a;J_{\blt})=v(\a)$.
\end{example}

The following lemma shows some basis properties of the asymptotic Samuel function, which are actually known in \cite{CP24}. The proof below follows from the strategy of \cite[Theorem 2]{Sa52}.

\begin{lemma}[{see \cite[Proposition 3.11]{CP24}}]\label{lem-mu.subadditivity}
    Let $J_{\blt}$ be a filtration of ideals in $R$. Then
    \begin{enumerate}
    \item $\vb(\a;J_{\blt})\ge \vb(\b;J_{\blt})$, for ideals $\a\subseteq\b$ in $R$;
    \item $\vb(\a^k;J_{\blt})=k\cdot\vb(\a;J_{\blt})$, where $\a$ is an ideal in $R$ and $k\in\bbZ_+$;
    \item $\vb(\a+\b;J_{\blt})= \min\{\vb(\a;J_{\blt}), \vb(\b;J_{\blt})\}$, where $\a,\b$ are ideals in $R$;
    \item $\vb(\a\b;J_{\blt})\ge \vb(\a;J_{\blt}) +\vb(\b;J_{\blt})$, where $\a,\b$ are ideals in $R$.
    \end{enumerate}
\end{lemma}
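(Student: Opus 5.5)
The plan is to work throughout with the order function $\vJ$ and the identity $\vbJ(\a)=\sup_{m\ge1}\vJ(\a^m)/m$. Each item will first be proved at finite level and then passed to the limit. Only two properties of a filtration are needed. The first is monotonicity, $J_m\supseteq J_{m+1}$, so that $\a\subseteq J_p$ implies $\a\subseteq J_q$ for all $q\le p$. The second is multiplicativity, $J_pJ_q\subseteq J_{p+q}$, which gives $\vJ(\a\b)\ge\vJ(\a)+\vJ(\b)$.

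Items (1), (2) and (4) are routine. For (1): if $\a\subseteq\b$, then $\a^m\subseteq\b^m$, so $\vJ(\a^m)\ge\vJ(\b^m)$; dividing by $m$ and letting $m\to\infty$ gives the claim. For (2): $\vJ\big((\a^k)^m\big)/m = k\cdot\vJ(\a^{km})/(km)$, and $\vJ(\a^{km})/(km)$ is a subsequence of a convergent sequence, so letting $m\to\infty$ gives $k\,\vbJ(\a)$. For (4): $(\a\b)^m=\a^m\b^m$, so multiplicativity gives $\vJ((\a\b)^m)\ge\vJ(\a^m)+\vJ(\b^m)$; divide by $m$ and take limits. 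When some value is $\infty$, these arguments need only the conventions $\infty+x=\infty$ and $k\cdot\infty=\infty$.

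The main content is (3). The inequality $\vbJ(\a+\b)\le\min\{\vbJ(\a),\vbJ(\b)\}$ follows from (1), since $\a,\b\subseteq\a+\b$. For the reverse inequality, write $(\a+\b)^m=\sum_{i=0}^m\a^i\b^{m-i}$. Fix $\lambda<\min\{\vbJ(\a),\vbJ(\b)\}$ with $\lambda\ge0$ (if the min is $0$ there is nothing to prove). Since $\vJ(\a^i)/i\to\vbJ(\a)>\lambda$, there is a constant $C\ge0$ with $\vJ(\a^i)\ge\lambda i-C$ for all $i\ge0$, and likewise $\vJ(\b^j)\ge\lambda j-C$ for all $j\ge0$. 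This is Samuel's trick: only finitely many $i$ violate $\vJ(\a^i)\ge\lambda i$, and $C$ absorbs them. Then
\[
\vJ(\a^i\b^{m-i})\ge\vJ(\a^i)+\vJ(\b^{m-i})\ge\lambda m-2C .
\]
By monotonicity each summand $\a^i\b^{m-i}$ lies in $J_{\lfloor\lambda m-2C\rfloor}$ (read $J_0=R$ if this index is negative), so the finite sum does too. Hence $\vJ((\a+\b)^m)\ge\lambda m-2C-1$. Dividing by $m$ and letting $m\to\infty$ gives $\vbJ(\a+\b)\ge\lambda$, and letting $\lambda$ increase gives the reverse inequality. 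If the minimum is $\infty$, the same argument applies with arbitrary $\lambda$.

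The one subtle step is obtaining the uniform constant $C$ valid for all $i$. It relies on the limit, not merely the supremum, being attained. Fekete's lemma guarantees this, and it is already recorded in the definition of $\vbJ$.
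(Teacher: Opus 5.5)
Your proof is correct and follows essentially the paper's argument. Items (1) and (2) are immediate, (4) comes from $J_pJ_q\subseteq J_{p+q}$ applied to $(\a\b)^m=\a^m\b^m$, and (3) is Samuel's trick of expanding $(\a+\b)^m=\sum_i\a^i\b^{m-i}$ at a slope $\lambda<\min\{\vbJ(\a),\vbJ(\b)\}$. The differences are cosmetic: you absorb the error into a uniform constant $C$ and handle integer indices and monotonicity explicitly, whereas the paper carries $o(l)$ error terms, and you prove (4) directly at each finite level rather than via auxiliary slopes $\sigma_1,\sigma_2$.
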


\begin{proof}
    The first two statements are obvious.
    
    (3): Clearly $\vb(\a+\b;J_{\blt})\le \min\{\vb(\a;J_{\blt}), \vb(\b;J_{\blt})\}$. Then we only need to prove the converse inequality. Without loss of generality, we assume both $\vb(\a;J_{\blt})$ and $\vb(\b;J_{\blt})$ are nonzero. Fix any $0<\sigma<\min\{\vb(\a;J_{\blt}), \vb(\b;J_{\blt})\}$. Then by definition we have the inclusions:
    \[\a^l\subseteq J_{\sigma l-F(l)}, \quad \b^l\subseteq J_{\sigma l-G(l)},\]
    where $F(l)=o(l)$ and $G(l)=o(l)$ as $l\to\infty$. Then it follows that
    \[(\a+\b)^l=\sum_{j=0}^l \a^j\b^{l-j}\subseteq \sum_{j=0}^l J_{\sigma j-F(j)}J_{\sigma (l-j)-G(l-j)}\subseteq J_{\sigma l-H(l)},\]
    where $H(l)=\sup_{0\le j\le l} \big(F(j)+G(l-j)\big)=o(l)$ as $l\to\infty$. Hence, we obtain $\vb(\a+\b; J_{\blt})\ge \sigma$, which gives the desired converse inequality by the choice of $\sigma$.

    (4): It suffices to prove the case where both $\vb(\a;J_{\blt})$ and $\vb(\b;J_{\blt})$ are nonzero. Let $0<\sigma_1<\vb(\a;J_{\blt})$ and $0<\sigma_2<\vb(\b;J_{\blt})$. Then we have $\a^l\subseteq J_{\sigma_1 l-o(l)}$ and  $\b^l\subseteq J_{\sigma_2 l-o(l)}$ as $l\to\infty$, which implies
    \[(\a\b)^{l}\subseteq J_{\sigma_1 l-o(l)}\cdot J_{\sigma_2 l-o(l)}\subseteq J_{(\sigma_1+\sigma_2)l-o(l)}.\]
    Hence, we obtain $\vb(\a\b; J_{\blt})\ge \sigma_1+\sigma_2$, which gives the desired inequality.
\end{proof}

Let $I$ be an ideal of $R$. The \emph{integral closure} of $I$ in $R$ is the ideal $\ol{I}$ consisting of $x\in R$ satisfying some monic polynomial equation:
\[x^n+a_1 x^{n-1}+\cdots+a_n=0, \quad \text{with} \ a_i\in I^i.\]
Note that if $\a,\b$ are ideals in $R$, then $\ol{\a}\cdot\ol{\b}\subseteq \ol{\a\cdot\b}$ (cf. \cite[Corollary 6.8.6]{HS06}). Thus, for a filtration $J_{\blt}$ of ideals in $R$, we have that $\ol{J}_{\blt}\coloneqq \{\ol{J_m}\}_{m\in\bbN}$ is also a filtration.

\begin{lemma}\label{prop-int.cl}
    For any filtration $J_{\blt}$ and any ideal $\a$ in $R$, we have $\vb(\a;J_{\blt})=\vb(\a;\ol{J}_{\blt})$.
\end{lemma}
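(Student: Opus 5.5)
The inequality $\vb(\a;J_{\blt})\le\vb(\a;\ol{J}_{\blt})$ is immediate. The converse will follow from the theory of reductions: a finitely generated ideal integral over $J_m$ is absorbed into $J_m^s\subseteq J_{ms}$ after multiplying by a fixed power. Throughout, we may assume $\a\neq 0$, since otherwise both sides are $\infty$.

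\emph{Easy direction.} Since $J_m\subseteq\ol{J_m}$ for every $m$, any inclusion $\a^l\subseteq J_m$ gives $\a^l\subseteq \ol{J_m}$. Hence $\vJ(\a^l)\le \nu_{\ol{J}_{\blt}}(\a^l)$ for all $l$. Dividing by $l$ and letting $l\to\infty$ gives $\vb(\a;J_{\blt})\le\vb(\a;\ol{J}_{\blt})$.

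\emph{Converse direction.} Since $\vb(\a;\ol{J}_{\blt})=\sup_{l}\nu_{\ol{J}_{\blt}}(\a^l)/l$, it suffices to prove the following: whenever $l\ge 1$ and $m\in\bbN$ satisfy $\a^l\subseteq\ol{J_m}$, we have $\vb(\a;J_{\blt})\ge m/l$. This also covers the case $\nu_{\ol{J}_{\blt}}(\a^l)=\infty$, because then the inclusion holds for arbitrarily large $m$.

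Fix such $l$ and $m$, and put $\b=\a^l$ and $K=J_m+\b$. Both are finitely generated because $R$ is noetherian.
\begin{enumerate}
\item Since $\b\subseteq\ol{J_m}$, the ideal $K$ is integral over $J_m$, so $J_m$ is a reduction of $K$ (cf. \cite[Corollary 1.2.5]{HS06}).
\item Because $K$ is finitely generated, there is some $t\ge 0$ with $K^{t+1}=J_mK^t$.
\item Iterating this equality gives, for every $s\ge 1$,
\[\b^{t+s}\subseteq K^{t+s}=J_m^sK^t\subseteq J_m^s\subseteq J_{ms},\]
where the last inclusion uses that $J_{\blt}$ is graded.
\end{enumerate}
It follows that $\vJ(\a^{l(t+s)})\ge ms$ for all $s\ge1$. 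Using the limit definition of $\vb$ along the subsequence of exponents $l(t+s)$, we obtain
\[\vb(\a;J_{\blt})\ge\lim_{s\to\infty}\frac{ms}{l(t+s)}=\frac{m}{l}.\]
The integer $t$ depends on $l$ and $m$, but it disappears in the limit $s\to\infty$.

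The only real obstacle is step (2): an integral-closure relation has to be turned into a containment in the original filtration with a loss that is uniform in $s$. The reduction number $t$ supplies exactly this, and it requires only that $R$ be noetherian. We do not need any Briançon–Skoda or Rees-type uniform bound, which would require extra hypotheses on $R$.
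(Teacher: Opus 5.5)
Your proof is correct. The paper does not argue this lemma itself; it only cites \cite[Corollary 4.6]{CP24}, so the statement there rests on Cutkosky--Praharaj's general theory of asymptotic Samuel functions of filtrations.

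You instead give a short self-contained argument. Since $R$ is noetherian, $J_m$ is a reduction of the finitely generated ideal $K=J_m+\a^l\subseteq\ol{J_m}$. The reduction number $t$ then gives $\a^{l(t+s)}\subseteq J_m^s\subseteq J_{ms}$ for every $s\ge 1$. The loss $t$ depends on $(l,m)$ but not on $s$, so it disappears as $s\to\infty$, and no uniform Brian\c{c}on--Skoda type bound is needed.

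What your route buys is transparency. It shows the lemma uses nothing beyond noetherianity of $R$ and the graded property $J_m^s\subseteq J_{ms}$. What the citation buys is brevity, and it places the statement inside a framework the paper draws on elsewhere.

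A small simplification is available. You can take $K=\ol{J_m}$ directly: it is finitely generated and has $J_m$ as a reduction. Then $\ol{J_m}^{\,t+s}=J_m^s\,\ol{J_m}^{\,t}\subseteq J_{ms}$ for all $s$, and the auxiliary ideal $J_m+\a^l$ is not needed.
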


\begin{proof}
    See \cite[Corollary 4.6]{CP24}.
\end{proof}

Now we consider the relations between valuations and asymptotic Samuel functions. We start from the following simple observation.

\begin{lemma}\label{lem-v(J.).le.mu.v(a)}
    Assume $\vb(\a;J_{\blt})<\infty$. For every valuation $v$ on $R$, we have
    \begin{equation}\label{eq-v.rho}
        v(\a)\ge \vb(\a;J_{\blt})v(J_{\bullet}).
    \end{equation}
\end{lemma}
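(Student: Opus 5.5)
The idea is to unwind the definition of $\vJ$ along powers of $\a$, apply $v$ to the resulting inclusions, and pass to the limit using Fekete's lemma. Throughout, write $\rho=\vb(\a;J_{\blt})<\infty$.

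First the degenerate cases. If $\rho=0$ the inequality \eqref{eq-v.rho} is trivial, because $v(\a)\ge 0$. So assume $\rho>0$. For each $m\ge 1$ set $k_m=\vJ(\a^m)$. This is an integer, and it is finite: since $\vJ(\a^m)/m\le\rho<\infty$ by the $\sup$ formula for $\vb$. By the definition of the order function, $\a^m\subseteq J_{k_m}$.

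Next, apply $v$ to this inclusion. Since $v$ of an ideal is an infimum, an inclusion of ideals reverses the inequality, so
\[m\,v(\a)=v(\a^m)\ge v(J_{k_m}).\]
Because $J_{\blt}$ is a graded sequence, Fekete's lemma (recalled in \Cref{subsec-gr.fil}) gives $v(J_{\blt})=\inf_{k\ge1} v(J_k)/k$. Hence $v(J_k)\ge k\,v(J_{\blt})$ for every $k\ge 1$ with $J_k\neq(0)$. Combining the two inequalities yields
\[v(\a)\ge \frac{k_m}{m}\,v(J_{\blt}).\]
Letting $m\to\infty$ and using $k_m/m\to\rho$ then gives $v(\a)\ge\rho\,v(J_{\blt})$, which is \eqref{eq-v.rho}.

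The only delicate points are bookkeeping with zero ideals and infinities.

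\emph{Case $k_m=0$.} Here the bound $v(\a)\ge 0$ is trivial, and this can only happen for finitely many $m$ since $\rho>0$.

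\emph{Case $J_{k_m}=(0)$.} Then $\a^m=0$. If $\a\ne 0$ this is impossible in a domain, and if $\a=0$ then $v(\a)=\infty$ and there is nothing to prove.

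\emph{Case $v(J_{\blt})=\infty$.} This cannot occur when some $J_k\ne 0$, since $v(J_k)<\infty$ then. If instead all $J_k=0$ for $k\ge1$, then $\rho<\infty$ forces $\a^m\not\subseteq (0)$, i.e.\ $\a\neq0$. This is a separate case; the convention in \Cref{subsec-gr.fil} excludes it by assuming some $\a_m\neq (0)$.

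I expect no real obstacle. The main care needed is handling the limit in $v(J_{\blt})$ over the indices with $J_k\neq (0)$, and the case $\rho>0$ but $v(J_{\blt})=0$, which is again trivial.
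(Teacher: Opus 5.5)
Your proof is correct and follows the same route as the paper: apply $v$ to the inclusion $\a^m\subseteq J_{\vJ(\a^m)}$, use $v(J_k)\ge k\,v(J_{\blt})$ from the Fekete infimum formula, divide by $m$ and let $m\to\infty$. The paper leaves the degenerate cases implicit, and your handling of them is fine; note only that $\a=0$ cannot occur at all, since then $\vJ(\a^m)=\infty$ and so $\vb(\a;J_{\blt})=\infty$.
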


\begin{proof}
    For every valuation $v$ on $R$, if $\a^l\subseteq J_m$ for $l\in\bbN$, then $v(\a)\cdot l=v(\a^l)\ge v(J_m)$. Dividing by $l$ in both sides and letting $l\to\infty$, we obtain $v(\a)\ge \vb(\a;J_{\blt})v(J_{\bullet})$.
\end{proof}

\begin{remark}\label{rem-asf.Rees}
Recall \Cref{ex-asf}. According to \cite[Lemma 10.1.5]{HS06}, if additionally $I$ is an ideal not contained in any minimal prime ideal of $R$, then the classical asymptotic Samuel function satisfies
\[\ol{\nu}_I(\a)=\min\Big\{\frac{v(\a)}{v(I)} \ \Big| \  v\in \mathcal{RV}(I)\Big\},\]
where $\mathcal{RV}(I)$ denotes the set of \emph{Rees valuations} of $I$. We see that in this case the equality in \eqref{eq-v.rho} must be attained by some Rees valuation. However, this fails for asymptotic Samuel functions of general filtration of ideals (see \Cref{rem-vb.neq.sigma}).
\end{remark}

Let $\a_{\blt}$ and $\b_{\blt}$ be graded sequences of ideals in $R$. Recall that $\a_{\blt}+\b_{\blt}$ is also a graded sequence of ideals defined by:
\[(\a_{\blt}+\b_{\blt})_m\coloneqq \sum_{i=0}^m \a_{i}\cdot \b_{m-i}, \quad m\in\bbN.\]

\begin{lemma}\label{lem-va.+b.}
     For every valuation $v$ on $R$,
     \[v(\a_{\blt}+\b_{\blt})=\min\{v(\a_{\blt}),v(\b_{\blt})\}.\]
\end{lemma}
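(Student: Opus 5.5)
We prove the two inequalities separately. Write $\fk{c}_{\blt}=\a_{\blt}+\b_{\blt}$, so $\fk{c}_m=\sum_{i=0}^m\a_i\b_{m-i}$ with $\a_0=\b_0=R$. We use freely that $v(\a_{\blt})=\inf_{m}v(\a_m)/m$ (Fekete) and the conventions $v(0)=\infty$, so that terms with $\a_m=(0)$ may be ignored.

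\emph{Upper bound.} Since $\a_m=\a_m\b_0\subseteq\fk{c}_m$, we get $v(\fk{c}_m)\le v(\a_m)$ for every $m$. Dividing by $m$ and letting $m\to\infty$ gives $v(\fk{c}_{\blt})\le v(\a_{\blt})$. Symmetrically, $v(\fk{c}_{\blt})\le v(\b_{\blt})$, so
\[v(\fk{c}_{\blt})\le\min\{v(\a_{\blt}),v(\b_{\blt})\}.\]

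\emph{Lower bound.} Set $\mu=\min\{v(\a_{\blt}),v(\b_{\blt})\}$. Since $v(\a_{\blt})$ is the infimum of $v(\a_i)/i$ over $i\ge1$, we have $v(\a_i)\ge i\,v(\a_{\blt})\ge i\mu$ for all $i\ge1$. The same inequality holds trivially for $i=0$ because $v(R)=0$. Likewise $v(\b_j)\ge j\mu$ for all $j\ge 0$. Using $v(\a\b)=v(\a)+v(\b)$ and $v(\a+\b)=\min\{v(\a),v(\b)\}$ for ideals (Section \ref{subsec-val}), we obtain
\[v(\fk{c}_m)=\min_{0\le i\le m}\big(v(\a_i)+v(\b_{m-i})\big)\ge m\mu .\]
Dividing by $m$ and passing to the limit gives $v(\fk{c}_{\blt})\ge\mu$. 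Here the Fekete infimum formula is what makes the argument clean: it yields the exact bound $v(\a_i)\ge i\,v(\a_{\blt})$ with no $o(i)$ error, in contrast to the proof of \Cref{lem-mu.subadditivity}(3).

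The only point needing care is degenerate terms. If some $\a_i$ is zero, its contribution to the minimum is $\infty$ and can be dropped. If $\a_{\blt}$ is zero for all $i\ge1$, then $v(\a_{\blt})=\infty$ and the minimum equals $v(\b_{\blt})$, as desired. The case where both sequences are identically zero for $m\ge1$ is excluded by the standing assumption, so $\fk{c}_m\neq(0)$ for some $m\ge1$ and the limit defining $v(\fk{c}_{\blt})$ makes sense.
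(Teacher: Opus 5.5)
Your proof is correct and follows the paper's argument essentially verbatim. The upper bound comes from $\fk{c}_m\supseteq\a_m,\b_m$. The lower bound comes from inserting the exact Fekete bound $v(\a_i)\ge i\,v(\a_{\blt})$ into $v(\fk{c}_m)=\min_{0\le i\le m}\{v(\a_i)+v(\b_{m-i})\}\ge m\min\{v(\a_{\blt}),v(\b_{\blt})\}$. Your handling of zero terms is harmless, though the convention $v(0)=\infty$ together with the standing nonvanishing assumption on graded sequences already covers it.
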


\begin{proof}
    Denote $\fk{c}_{\blt}=\a_{\blt}+\b_{\blt}$. Since $\fk{c}_m\supseteq \a_m$ and $\fk{c}_m\supseteq \b_m$ for every $m$, we have $v(\fk{c}_{\blt})\le \min\{v(\a_{\blt}),v(\b_{\blt})\}$. On the other hand, for every $m\in\bbN$,
    \begin{equation*}
        \begin{aligned}
            v(\fk{c}_m)=\min_{0\le i\le m}v(\a_{i}\cdot \b_{m-i})&=\min_{0\le i\le m}\big\{v(\a_{i})+v(\b_{m-i})\big\}\\
            &\ge \min_{0\le i\le m}\big\{i\cdot v(\a_{\blt})+(m-i)\cdot v(\b_{\blt})\big\}\\
            &\ge m\cdot \min\{v(\a_{\blt}),v(\b_{\blt})\},
        \end{aligned}
    \end{equation*}
    which implies $v(\fk{c}_{\blt})\ge \min\{v(\a_{\blt}),v(\b_{\blt})\}$. We obtain the desired equality.
\end{proof}

Now let $\a_1,\ldots,\a_r$ be ideals in $R$, and $b_1,\ldots,b_r\in\bbR_{\ge 0}$. Recall the filtration
    \[I_{\blt}\coloneqq \Big(\frac{1}{b_1}\cdot \a_{1}\Big)_{\blt}+\cdots+\Big(\frac{1}{b_r}\cdot \a_{r}\Big)_{\blt}.\]

\begin{lemma}\label{lem-v(I.)}
    For every valuation $v$ on $R$, we have
    \[v(I_{\blt})=\min_{b_j>0}\left\{\frac{1}{b_1}v(\a_1), \ldots, \frac{1}{b_r}v(\a_r)\right\}.\]
\end{lemma}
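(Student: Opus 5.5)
By \Cref{lem-va.+b.} and induction on the number of summands, we get $v(I_{\blt})=\min_j v\big((\tfrac{1}{b_j}\cdot\a_j)_{\blt}\big)$. Strictly speaking, \Cref{lem-va.+b.} is stated for two graded sequences. For the $r$-fold sum we note that the sum of filtrations is associative: both $(\a_{\blt}+\b_{\blt})+\fk{c}_{\blt}$ and the $r$-fold definition give, at level $m$, the sum of the products $\a_{m_1}\b_{m_2}\fk{c}_{m_3}$ over $m_1+m_2+m_3=m$. Hence induction on $r$ applies. The whole computation therefore reduces to evaluating $v$ on each single filtration $(\lambda\cdot\a)_{\blt}$ with $\lambda=1/b_j$.

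We treat the three cases of $\lambda$ separately.
\begin{enumerate}
\item If $b_j=0$, then $\lambda=\infty$ and $(\lambda\cdot\a_j)_m=0$ for $m\ge 1$. By the convention $v(0)=\infty$, this filtration contributes $+\infty$ to the minimum. Equivalently, in the sum only the $m_j=0$ term survives, so the $j$-th summand can simply be dropped from $I_{\blt}$. This is why the minimum ranges only over $b_j>0$.
\item If $b_j>0$, then $(\lambda\cdot\a_j)_m=\a_j^{\lceil m/b_j\rceil}$. Hence
\[
v\big((\lambda\cdot\a_j)_m\big)=\lceil m/b_j\rceil\, v(\a_j),
\]
and dividing by $m$ and letting $m\to\infty$ gives $v(\a_j)/b_j$.
\item The case $\lambda=0$ cannot occur, since $b_j<\infty$.
\end{enumerate}

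The only point requiring care is the degenerate situation. When some $b_j=0$, the summand is the filtration $(R,0,0,\dots)$, and we must check that the limit defining $v(\cdot)$ (taken over $m$ with $\a_m\ne 0$) is interpreted consistently. The cleanest route is to discard these summands before invoking \Cref{lem-va.+b.}, which is justified by the observation in case (1). No real obstacle arises beyond this bookkeeping.
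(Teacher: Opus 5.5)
Your proposal is correct and follows the paper's proof: you compute $v\big((\tfrac{1}{b_j}\cdot\a_j)_{\blt}\big)=v(\a_j)/b_j$ for each $b_j>0$ from $\lceil m/b_j\rceil v(\a_j)/m\to v(\a_j)/b_j$, then apply \Cref{lem-va.+b.} with induction on $r$. Your remarks on associativity of the sum and on dropping the summands with $b_j=0$ make explicit bookkeeping that the paper leaves implicit; dropping them relies on the convention that the $0$-th term of each filtration is $R$.
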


\begin{proof}
    Note that $v\Big(\big(\frac{1}{b_j}\cdot \a_j\big)_{\blt}\Big)=\frac{1}{b_j}v(\a_j)$ for every $j$ with $b_j>0$. Then this lemma follows from \Cref{lem-va.+b.} and induction on $r$.
\end{proof}

\begin{lemma}\label{lem-mu.j.le.bj.-1}
    $\vb(\a_j;I_{\blt})\ge b_j$ for $j=1,\ldots, r$, and $\vb(\a_1\cdots\a_r;I_{\blt})\ge \sum_{j=1}^r b_j$.
\end{lemma}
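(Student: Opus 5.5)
The plan is to prove the first inequality straight from the definitions, by exhibiting explicit inclusions $\a_j^k\subseteq I_m$, and then to get the second from the superadditivity in \Cref{lem-mu.subadditivity}(4).

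\emph{Step 1: the case $b_j=0$.} Here the claim $\vb(\a_j;I_{\blt})\ge 0$ is trivial, since the asymptotic Samuel function takes values in $\bbR_{\ge 0}\cup\{\infty\}$.

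\emph{Step 2: the case $b_j>0$.} Fix $k\in\bbZ_+$ and put $m=\lfloor b_j k\rfloor$. Then
\[\lceil m/b_j\rceil\le k,\]
because $m/b_j\le k$. This gives
\[\a_j^k\subseteq \a_j^{\lceil m/b_j\rceil}=\Big(\frac{1}{b_j}\cdot\a_j\Big)_m.\]
In the definition of the sum filtration, take $m_j=m$ and $m_i=0$ for $i\ne j$. Each factor $\big(\frac{1}{b_i}\cdot\a_i\big)_0$ equals $\a_i^0=R$, including when $b_i=0$, where the term is $R$ by definition. Hence $\big(\frac{1}{b_j}\cdot\a_j\big)_m\subseteq I_m$. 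It follows that $\vI(\a_j^k)\ge \lfloor b_jk\rfloor$. Dividing by $k$ and letting $k\to\infty$ gives $\vb(\a_j;I_{\blt})\ge b_j$.

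\emph{Step 3: the product.} Applying \Cref{lem-mu.subadditivity}(4) repeatedly (by induction on $r$) yields
\[\vb(\a_1\cdots\a_r;I_{\blt})\ge \sum_{j=1}^r\vb(\a_j;I_{\blt})\ge \sum_{j=1}^r b_j.\]
If some $\vb(\a_j;I_{\blt})$ is infinite, the statement in (4) should be read with the convention $\infty+x=\infty$. Its proof, which uses arbitrary $\sigma_1,\sigma_2$ below the two values, works verbatim in that case, so no separate argument is needed.

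The only delicate point is the bookkeeping with ceilings and the degenerate terms $b_i=0$; there is no real obstacle.
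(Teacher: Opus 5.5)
Your proof is correct and is essentially the paper's argument: the inclusion $\a_j^{\lceil m/b_j\rceil}\subseteq(\frac{1}{b_j}\cdot\a_j)_m\subseteq I_m$ gives $\vb(\a_j;I_{\blt})\ge b_j$, and \Cref{lem-mu.subadditivity}(4) then gives the bound for the product. One small correction to your bookkeeping: when $b_i=0$, the factor $(\frac{1}{b_i}\cdot\a_i)_0$ equals $R$ because of the convention $\a_0=R$ for graded sequences in \Cref{subsec-gr.fil}, not because of the displayed definition, which for $\lambda=\infty$ read literally gives $0$.
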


\begin{proof}
    Note that $\a_j^{\lceil m/b_j \rceil}\subseteq \Big(\frac{1}{b_j}\cdot \a_{j}\Big)_m\subseteq I_m$ for all $m\ge 1$. We have $\vb(\a_j;I_{\blt})\ge b_j$ for $j=1,\ldots, r$. Then it also follows from \Cref{lem-mu.subadditivity} that $\vb(\a_1\cdots\a_r;I_{\blt})\ge \sum_{j=1}^r \vb(\a_j;I_{\blt})\ge \sum_{j=1}^r b_j$.
\end{proof}

The following proposition proves the implication ``(1) $\Rightarrow$ (3)'' in \Cref{thm-finite.interp}, where as we can see, this implication actually holds for all noetherian domains.

\begin{proposition}\label{prop-v(aj)=bj.implies.mu=bj-1}
    If $v$ is valuation on $R$ such that $v(\a_j)\ge b_j$ for $j=1,\ldots, r$, then $\vb(\a_1\cdots\a_r;I_{\blt})\le \sum_{j=1}^r v(\a_j)$. Moreover, if $v(\a_j) = b_j$ for all $j$, then $\vb(\a_1\cdots\a_r;I_{\blt})=\sum_{j=1}^r  b_j$.
\end{proposition}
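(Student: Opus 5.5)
The plan is to combine \Cref{lem-v(J.).le.mu.v(a)} with the computation of $v(I_{\blt})$ in \Cref{lem-v(I.)}. Set $\a\coloneqq\a_1\cdots\a_r$. Since $R$ is a domain and every $\a_j$ is nonzero, $\a$ is a nonzero ideal. Moreover $v(\a)=\sum_{j=1}^r v(\a_j)$ by multiplicativity of $v$ on ideals (\Cref{subsec-val}). So it suffices to show $\vb(\a;I_{\blt})\le v(\a)$.

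First I dispose of the degenerate case where all $b_j=0$. Then each summand $\big(\frac{1}{b_j}\cdot\a_j\big)_{\blt}$ corresponds to $\lambda=\infty$, so it is $0$ in positive degrees. Hence $I_m=0$ for $m\ge 1$, giving $\vI(\a^k)=0$ and $\vb(\a;I_{\blt})=0\le v(\a)$.

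Otherwise some $b_j>0$. By \Cref{lem-v(I.)} and the hypothesis $v(\a_j)\ge b_j$, we get $v(I_{\blt})=\min_{b_j>0}v(\a_j)/b_j\ge 1$.

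Next I rule out $\vb(\a;I_{\blt})=\infty$. Rerun the argument of \Cref{lem-v(J.).le.mu.v(a)}: whenever $\a^l\subseteq I_m$, we have $l\,v(\a)\ge v(I_m)\ge m\,v(I_{\blt})\ge m$. Here $v(I_m)\ge m\,v(I_{\blt})$ holds because $v(I_{\blt})=\inf_m v(I_m)/m$. If the ratio $m/l$ could be made arbitrarily large, then $v(\a)=\infty$. That is impossible, since $\a\ne 0$ and $v$ is real-valued on $R^*$. Thus $\vb(\a;I_{\blt})<\infty$.

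With finiteness established, \Cref{lem-v(J.).le.mu.v(a)} applies and gives
\[\sum_{j=1}^r v(\a_j)=v(\a)\ge \vb(\a;I_{\blt})\,v(I_{\blt})\ge \vb(\a;I_{\blt}),\]
which is the first assertion.

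For the second assertion, when $v(\a_j)=b_j$ for all $j$, the bound just proved gives $\vb(\a;I_{\blt})\le\sum_j b_j$. The reverse inequality $\vb(\a;I_{\blt})\ge\sum_j b_j$ is \Cref{lem-mu.j.le.bj.-1}, so equality holds.

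The only delicate point is the finiteness of $\vb(\a;I_{\blt})$, which \Cref{lem-v(J.).le.mu.v(a)} assumes. The inline argument above, using $v(I_{\blt})\ge 1>0$, is how I handle it.
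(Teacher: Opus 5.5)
Your proposal is correct and follows essentially the same route as the paper. You use \Cref{lem-v(I.)} to get $v(I_{\blt})\ge 1$, then \Cref{lem-v(J.).le.mu.v(a)} to get $\vb(\a_1\cdots\a_r;I_{\blt})\le v(\a_1\cdots\a_r)=\sum_j v(\a_j)$, and \Cref{lem-mu.j.le.bj.-1} for the reverse inequality in the equality case.

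Your explicit finiteness check on $\vb(\a_1\cdots\a_r;I_{\blt})$ (which \Cref{lem-v(J.).le.mu.v(a)} formally assumes and the paper leaves unverified) and your treatment of the all-$b_j=0$ case are harmless extra care. In fact your estimate $m\le l\,v(\a)$ whenever $\a^l\subseteq I_m$ already proves the first assertion on its own.
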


\begin{proof}
    Since $v(\a_j)\ge b_j$ for all $j$, we have $v(I_{\blt})\ge 1$ by \Cref{lem-v(I.)}. Hence, it follows from \Cref{lem-v(J.).le.mu.v(a)} that $\vb(\a_1\cdots\a_r;I_{\blt})\le v(I_{\blt})^{-1}v(\a_1\cdots\a_r)\le \sum_{j=1}^r v(\a_j)$.
    
    If $v(\a_j) = b_j$ for all $j$, then the above inequality shows $\vb(\a_1\cdots\a_r;I_{\blt})\le \sum_{j=1}^r b_j$, while according to \Cref{lem-mu.j.le.bj.-1}, we have the converse inequality $\vb(\a_1\cdots\a_r;I_{\blt})\ge \sum_{j=1}^r b_j$. Hence, the desired equality holds.
\end{proof}

\section{Existence of finite valuative interpolation}\label{sec-f.v.i}

In this section, $R$ is an excellent regular domain of equicharacteristic $0$ and of dimension $n$. $X=\Spec R$.

\subsection{Divisorial valuation and quasi-monomial valuation}\label{sec-qm.log.dis.}
Let $X$ be a separated regular excellent scheme over $\bbQ$. Denote by $\Val_X$ the set of real valuations $v$ on the function field $K(X)$ of $X$ that admit a center $c_X(v)$ on $X$ (cf. \cite[Section 1.2]{JM12}).

A valuation $v\in\Val_X$ is called a \emph{divisorial valuation} on $X$, if there exists a prime divisor $E$ over $X$ (i.e. a prime divisor $E$ on a regular scheme $Y$ having a proper birational modification $\pi\colon Y\to X$), and a positive real number $\lambda\in\bbR_{>0}$, such that $v=\lambda\cdot \ord_E$. The \emph{log discrepancy} $A(v)$ of a divisorial valuation $v=\lambda\cdot\ord_E$ is defined such that $A(v)=\lambda\cdot A(\ord_E)$, where $A(\ord_E)\coloneqq\ord_E(K_{Y/X})+1$.

Let $\pi\colon Y\to X$ be a proper birational morphism with regular and connected $Y$, and let $y=(y_1,\cdots,y_r)\in \O_{Y,\eta}^{\oplus r}$ be a system of parameters at a point $\eta\in Y$. For $\alpha,\beta\in \bbR^r$, set $\langle\alpha,\beta\rangle\coloneqq \sum_{i=1}^r\alpha_i\beta_i$. The \emph{quasi-monomial valuation} associated with $y$ and $\alpha=(\alpha_1,\cdots,\alpha_r)\in \bbR^r_{\geq 0}$ is a valuation $\val_{\alpha}\colon K(X)\to \bbR\cup \{+\infty\}$ such that:
\begin{equation*}
    \val_{\alpha}(f)=\min\{\langle \alpha,\beta\rangle\colon c_{\beta}\neq 0\}
\end{equation*}
for all $f\in \O_{Y,\eta}$ written in $\wh{\O_{Y,\eta}}$ as  $f=\sum_{\beta\in \bbZ^r_{\geq 0}}c_{\beta}y^{\beta}$ with each $c_{\beta}\in\wh{\O_{Y,\eta}}$ either zero or a unit (cf. \cite[Section 3.1]{JM12}).

Let $(Y,D)$ be a log smooth pair over $X$. Denote by $\QM(Y,D)$ the set of all quasi-monomial valuations $v$ that can be described at some point $\eta\in Y$ as above such that each $y_i$ defines at $\eta$ an irreducible component of $D$. Then $\QM(Y,D)$ is the union of finitely many closed simplicial cones with each cone is homeomorphic to the cone $\bbR_{\ge 0}$ (see \cite[Lemma 4.5]{JM12}). There is also a \emph{retraction} map
\[r_{Y,D}\colon \Val_X \to \QM(Y,D),\]
which maps a valuation $v$ to the unique quasi-monomial valuation $w=r_{Y,D}(v)\in\QM(Y,D)$ with $w(D_i)=v(D_i)$ for every irreducible component $D_i$ of $D$. We have $r_{Y,D}(v)\le v$ and $r_{Y,D}(v)(\a)=v(\a)$ for an ideal on $X$ if $(Y,D)$ gives a log resolution of $\a$ (cf. \cite[Section 4.3]{JM12}).

The log discrepancy of a quasi-monomial $v\in\QM(Y,D)$ is defined by $A(v)=\sum_{i=1}^r v(D_i)A(\ord_{D_i})$,
and the log discrepancy of a general valuation $v\in\Val_X$ is defined by $A(v)=\sup_{(Y,D)}A(r_{Y,D}(v))\in\bbR_{\ge 0}\cup\{\infty\}$, where the supremum is over all log-smooth pair $(Y,D)$ over $X$ (cf. \cite[Section 5.2]{JM12}).

\subsection{Multiplier ideal and Skoda Theorem}

For an excellent scheme $X$ of characteristic $0$, the existence of \emph{log resolution} has been demonstrated in \cite{Te08,Te18}, generalizing Hironaka’s theorem: given an ideal $\a$ on $X$, there is a projective birational morphism $\pi\colon Y\to X$ such that $Y$ is regular, and $\a\cdot \O_Y$ is the ideal of a divisor $D$ such that $D+ K_{Y/X}$ is of simple normal crossing support.

Let $\a$ be a nonzero ideal on $X$, and $\lambda\in\bbR_{\ge 0}$. Then \emph{multiplier ideal} $\J(\lambda\cdot\a)$, also denoted by $\J(\a^{\lambda})$, is the ideal on $X$ consisting of local sections $f$ in $\O_X$ such that
\[\ord_E(f)+A(\ord_E)> \lambda \cdot \ord_E(\a)\]
for all prime divisors $E$ over $X$.

The following is the local version of an algebro-geometric analogue of Skoda's Division Theorem in complex analytic geometry (\cite{Sk72}). See also \cite[Variant 9.6.39]{La04} for Skoda's Theorem on singular normal varieties via log pairs.

\begin{theorem}[Skoda's Theorem]\label{thm-Skoda}
    For every nonzero ideal $\a$ on $X$ and $m\ge n$,
    \[\J(\a^m)\subseteq \a^{m-n+1}.\]
\end{theorem}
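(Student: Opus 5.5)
We argue through a log resolution and compare the multiplier ideal with an honest power of $\a$. Fix a log resolution $\pi\colon Y\to X$ of $\a$, with $\a\cdot\O_Y=\O_Y(-F)$ and $F+K_{Y/X}$ of simple normal crossing support. We use the standard description $\J(\a^\lambda)=\pi_*\O_Y(K_{Y/X}-\lfloor\lambda F\rfloor)$, i.e. it suffices to test the valuative condition on the divisors appearing on $Y$. The main tool is the local vanishing $R^i\pi_*\O_Y(K_{Y/X}-\lfloor\lambda F\rfloor)=0$ for $i>0$, which is available for excellent $\bbQ$-schemes (Murayama's relative vanishing). We then apply the standard Skoda argument via a Koszul complex.

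The first step is to reduce to $\a$ generated by at most $n$ elements. Since the statement is local and both sides only become smaller under passing to a reduction, we localize at a point $x\in X$. After a faithfully flat extension $R_x\to R_x[t]_{\m[t]}$, we may assume the residue field is infinite, and this extension is compatible with multiplier ideals and with ideal membership. There we pick a reduction $\b\subseteq\a$ generated by $n$ general elements (Northcott--Rees). Then $\ol{\b}=\ol{\a}$, so $\b\O_Y=\a\O_Y$ on a common log resolution, and hence $\J(\a^m)=\J(\b^m)$. Therefore it suffices to prove $\J(\b^m)\subseteq \b^{m-n+1}$, which is stronger than the desired inclusion $\J(\a^m)\subseteq\a^{m-n+1}$.

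Next comes the Koszul step. Write $\b=(g_1,\dots,g_n)$, and on $Y$ consider the line bundle $L=\O_Y(-F)$, which is generated by the $g_i$. The Koszul complex
\[0\to \wedge^{n}V\otimes L^{-n}\to\cdots\to V\otimes L^{-1}\to\O_Y\to0,\]
with $V=\O_Y^{\oplus n}$, is exact because the $g_i$ generate $L$. We twist it by $\O_Y(K_{Y/X}-mF)$ for $m\ge n$. Each term then has the form $\O_Y(K_{Y/X}-(m-k)F)^{\oplus\binom nk}$ with $m-k\ge m-n\ge0$, and local vanishing applies to all of them. Chasing the exact complex through $\pi_*$ shows that the map $\bigoplus_i \pi_*\O_Y(K_{Y/X}-(m-1)F)\to\pi_*\O_Y(K_{Y/X}-mF)$ given by the $g_i$ is surjective. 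This yields $\J(\b^m)=\b\cdot\J(\b^{m-1})$ for $m\ge n$.

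We finish by iterating this equality down to $m=n-1$:
\[\J(\b^m)=\b^{m-n+1}\J(\b^{n-1})\subseteq\b^{m-n+1}.\]
The main obstacle is justifying local vanishing and the reduction step (infinite residue field, $n$-generated reductions) in the excellent, non-finite-type setting. If the vanishing theorem is not available there, the fallback is to use the characterization of $\J$ via test ideals or to reduce to finite type by Artin/Néron approximation. The Koszul chase itself is routine.
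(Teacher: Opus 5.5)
Your argument is correct, granting local vanishing on $X$. Its core is the standard proof behind \cite[Theorem 9.6.21]{La04}: a log resolution, the Koszul complex of $\b\cdot\O_Y=\O_Y(-F)$ twisted by $\O_Y(K_{Y/X}-mF)$, and local vanishing on every term.

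The difference from the paper lies entirely in how the excellent setting is reached. The paper does not rerun the argument. It quotes the finite-type theorem and says it transfers to regular excellent schemes by the arguments of \cite[Appendix A]{JM12}. You instead work directly on $\Spec R_x$, taking $R^i\pi_*\O_Y(K_{Y/X}-\lfloor\lambda F\rfloor)=0$ from Murayama's relative vanishing theorem for excellent $\bbQ$-schemes. This gives an explicit proof on $X$ itself. However, its one nontrivial input is a vanishing theorem considerably deeper and more recent than the statement, and than \cite{JM12}. The paper's version needs only the classical finite-type result plus a citation, but leaves the transfer to the reader. Your fallback of reducing to finite type is the natural way to avoid Murayama's theorem.

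You can also drop the Northcott--Rees step and the base change to $R_x[t]_{\m[t]}$. Take any generators $g_1,\dots,g_k$ of $\a$. Surjectivity of $\bigoplus_i\pi_*\O_Y(K_{Y/X}-(m-1)F)\to\pi_*\O_Y(K_{Y/X}-mF)$ only needs $R^{i-1}\pi_*$ of the $i$-th twisted Koszul term $\wedge^i\O_Y^{\oplus k}\otimes\O_Y(K_{Y/X}-(m-i)F)$ to vanish for $2\le i\le k$.
\begin{itemize}
\item For $i\le n$ this is local vanishing, since $m-i\ge 0$.
\item For $i\ge n+1$ it is automatic. The fibres of the birational morphism $\pi$ have dimension at most $n-1$ (by the dimension formula, $X$ being excellent), so $R^j\pi_*=0$ for $j\ge n$.
\end{itemize}
This gives $\J(\a^m)=\a\,\J(\a^{m-1})$ for all $m\ge n$ directly, and hence the inclusion $\J(\a^m)\subseteq\a^{m-n+1}$.
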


\begin{proof}
This theorem is well-established when $X$ is of finite type (see \cite[Theorem 9.6.21]{La04}). Moreover, it can be extended to regular excellent schemes by employing the same arguments in \cite[Appendix A]{JM12}.
\end{proof}

\subsection{Asymptotic multiplier ideal and asymptotic jumping number}

Let $J_{\blt}$ be a graded sequence of ideals on $X$. For every $\lambda\ge 0$, it can be seen that the following family of ideals on $X$
\[\Big\{\J\Big(\frac{\lambda}{p}\cdot J_{p}\Big)\Big\}_{p\in\bbZ_+}\]
admits a unique maximal element, denoted by $\J( J^{\lambda}_{\blt})$ or $\J(\lambda\cdot J_{\blt})$ which is defined to be the \emph{asymptotic multiplier ideal} of $J_{\blt}$ with exponent $\lambda$ (cf. \cite[Definition 11.1.15]{La04}).

Let $\q$ be a nonzero ideal on $X$. We denote by $\lct^{\q}(J_{\blt})$ or $\lct(\q;J_{\blt})$ the \emph{asymptotic jumping number} (cf. \cite[Proposition 2.12]{JM12}):
\[\lct^{\q}(J_{\blt})=\lct(\q;J_{\blt})\coloneqq \min\{\lambda\ge 0 \mid \q\not\subseteq \J(J_{\blt}^{\lambda})\},\]
or equivalently (see \cite[Corollary 6.9]{JM12}),
\begin{equation}\label{eq-compute}
    \lct^{\q}(J_{\blt})=\inf_{v}\frac{A(v)+v(\q)}{v(J_{\blt})},
\end{equation}
where the infimum is over all (divisorial, quasi-monomial) valuations on $X$. Jonsson and Musta\c{t}\u{a} proved in \cite{JM12} that $\lct^{\q}(J_{\blt})$ can be \emph{computed} by some valuation $v$, i.e. attaining the infimum in \eqref{eq-compute}.

\begin{theorem}[{\cite[Theorem 7.3]{JM12}}]\label{thm-compute}
    If $\lct^{\q}(J_{\blt})=\lambda<\infty$, then for every generic point $\xi$ of an irreducible component of $V(\J(J_{\blt}^{\lambda}):\q)$, there is a valuation $v\in\Val_X$ that computes $\lct^{\q}(J_{\blt})$, with $c_X(v)=\xi$.
\end{theorem}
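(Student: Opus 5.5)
The statement is \cite[Theorem 7.3]{JM12}, and I would follow the Jonsson–Musta\c{t}\u{a} strategy. It has three parts: localize at $\xi$, show that the infimum in \eqref{eq-compute} can be taken over valuations centered at $\xi$, and then use a compactness argument in the valuation space.

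\emph{Localization.} Replace $X$ by $\Spec\O_{X,\xi}$. Multiplier ideals, asymptotic multiplier ideals and $A(\cdot)$ are compatible with flat localization, and the valuations centered at $\xi$ are precisely the valuations of the local ring $\O_{X,\xi}$ centered at its maximal ideal $\m_\xi$. After localizing, the colon ideal $\J(J_{\blt}^{\lambda}):\q$ is $\m_\xi$-primary, and $\lct^{\q}(J_{\blt})$ is unchanged, because $\xi$ lies in the locus where $\q\not\subseteq\J(J_{\blt}^{\lambda})$.

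\emph{The infimum can be restricted to center $\xi$.} I claim that
\[\lambda=\inf\Big\{\frac{A(v)+v(\q)}{v(J_{\blt})} \;\Big|\; v \text{ centered at } \xi\Big\}.\]
The inequality ``$\le$'' is immediate from \eqref{eq-compute}. For ``$\ge$'', pick $p$ so that $\J(J_{\blt}^{\lambda'})=\J(\frac{\lambda'}{p}J_p)$ for $\lambda'$ slightly above $\lambda$. Take a log resolution of $J_p\cdot\q\cdot\m_\xi$. Since $\q\not\subseteq\J(\frac{\lambda'}{p}J_p)$ only near $\xi$ after localizing, the divisors violating the multiplier-ideal inequality can be chosen with center exactly $\xi$. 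Letting $\lambda'\searrow\lambda$ and using $v(J_{\blt})\le v(J_p)/p$ gives the claim.

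\emph{Compactness.} Normalize by $v(\m_\xi)=1$. The ratio is bounded by some $C$ along a minimizing sequence, so $A(v)\le C'$ along it; here I use that $v(J_{\blt})\le v(\m_\xi)\cdot c$ and $\lambda<\infty$. By \cite{JM12}, the set $V_{C'}=\{v:\ c_X(v)=\xi,\ v(\m_\xi)=1,\ A(v)\le C'\}$ is compact in the topology of pointwise convergence on ideals; this is the Izumi-type estimate $v\le A(v)\,\ord_\xi$ plus Tychonoff. On $V_{C'}$ the three pieces behave as follows:
\begin{itemize}
\item $v\mapsto A(v)$ is lower semicontinuous, being a supremum of the continuous functions $A\circ r_{Y,D}$;
\item $v\mapsto v(\q)$ is continuous;
\item $v\mapsto v(J_{\blt})=\inf_p v(J_p)/p$ is upper semicontinuous.
\end{itemize}
Hence the ratio is lower semicontinuous wherever the denominator is positive. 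The minimizing sequence therefore has a limit point $v$ that attains $\lambda$, provided $v(J_{\blt})>0$ at that limit.

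\emph{Main obstacle.} The hard step is controlling the denominator: a minimizing sequence could have $v_k(J_{\blt})\to0$ while the ratio stays bounded. I would first rule this out using $A(v_k)+v_k(\q)\ge A(v_k)\ge c>0$ on normalized valuations (log discrepancy of a valuation centered at $\xi$ is at least $v(\m_\xi)=1$ up to a constant). Then a bounded ratio forces $v_k(J_{\blt})\ge 1/C$. Upper semicontinuity now gives $v(J_{\blt})\ge\limsup_k v_k(J_{\blt})\ge1/C$ at the limit. The resulting valuation has $c_X(v)=\xi$ because $v(\m_\xi)=1>0$ and it is centered in $\O_{X,\xi}$.
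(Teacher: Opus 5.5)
\textbf{There is a genuine gap in your compactness step.} It sits exactly where the hypothesis on $\xi$ has to be used. You bound $A(v_k)$ via ``$v(J_{\blt})\le c\,v(\m_\xi)$'', but no such inequality holds for valuations centered at $\xi$. Moreover, nothing in your compactness or main-obstacle paragraphs uses that $\xi$ is a generic point of a component of $V(\J(J_{\blt}^{\lambda}):\q)$, and without that hypothesis the argument must fail.

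Here is a counterexample. Take $X=\Spec k[x,y]$ with $k$ of characteristic $0$, $J_m=(x^m)$, $\q=R$, and $\xi$ the origin. Then $\lambda=1$. The monomial valuations $v_t$ with $v_t(x)=t$, $v_t(y)=1$ ($t\ge 1$) are centered at $\xi$, with $v_t(\m_\xi)=1$, $v_t(J_{\blt})=t$ and $A(v_t)=t+1$. So the infimum over valuations centered at $\xi$ is still $\lambda$, and the conclusion of your second step holds. Yet $A(v_t)\to\infty$, and no valuation centered at the origin computes $\lambda$: retracting to $(X,\{xy=0\})$ gives $A(v)\ge v(x)+v(y)>v(J_{\blt})$. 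Of course, the origin is not a generic point of a component of $V(\J(J_{\blt}^{1}))=V(x)$.

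Note also that the obstacle you single out, $v_k(J_{\blt})\to 0$, is harmless, since $A(v)\ge v(\m_\xi)$ as you observe. The real danger is $A(v_k)\to\infty$.

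\textbf{The missing input is that the colon ideal is $\m_\xi$-primary after localizing.} Concretely, $\m_\xi^N\q\subseteq\J(J_{\blt}^{\lambda})=\J(J_{\blt}^{\lambda+\epsilon})$ for some $N$ and some $\epsilon>0$, by right continuity of asymptotic multiplier ideals. Next, $v(f)+A(v)\ge c\,v(J_{\blt})$ for every $f\in\J(J_{\blt}^{c})$ and every $v$. To check this, write $\J(J_{\blt}^{c})=\J(\frac{c}{p}\cdot J_p)$, verify the inequality on $\QM(Y,D)$ for a log resolution $(Y,D)$ of $f\cdot J_p$, and then apply $r_{Y,D}$. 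Combining these gives
\[N\,v(\m_\xi)+v(\q)+A(v)\ge(\lambda+\epsilon)\,v(J_{\blt})\quad\text{for all } v\in\Val_X.\]
Hence every $v$ with $(A(v)+v(\q))/v(J_{\blt})\le\lambda+\delta$, $\delta<\epsilon$, satisfies $(\epsilon-\delta)\,v(J_{\blt})\le N\,v(\m_\xi)$. This has two consequences.
\begin{enumerate}
\item Such $v$ have $v(\m_\xi)>0$, i.e.\ they are centered at $\xi$. So \eqref{eq-compute} applied on $\Spec\O_{X,\xi}$ yields the claim of your second step.
\item After normalizing $v(\m_\xi)=1$, one gets $A(v)\le(\lambda+\delta)N/(\epsilon-\delta)$. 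This is the bound compactness requires, and with it your semicontinuity argument goes through.
\end{enumerate}

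You need this replacement for your second step anyway, because the argument there runs the wrong way. From $\ord_E(\q)+A(\ord_E)\le\frac{\lambda'}{p}\ord_E(J_p)$, the inequality $\ord_E(J_{\blt})\le\ord_E(J_p)/p$ only bounds $(A(\ord_E)+\ord_E(\q))/\ord_E(J_{\blt})$ from below, not from above.
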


Jonsson and Musta\c{t}\u{a} also conjectured in \cite{JM12} that the valuation computing the asymptotic jumping number can be chosen as a quasi-monomial valuation (see \cite[Conjecture C]{JM12}), where the cases $\dim X\le 2$ are proved by them in \cite{JM12}, and the case $\q=\O_X$ is proved by Xu in \cite{Xu20}. The general case still remains open.

\subsection{Asymptotic property of jumping numbers}

Let $\a$ be a proper ideal in $R$, and $J_{\blt}$ a graded sequence of nonzero ideals in $R$ with $\lct(J_{\blt})<\infty$. Note that it follows that $\lct(\a^k;J_{\blt})<\infty$ for all $k\in\bbN$. Furthermore, we can see $\lct(\a^k;J_{\blt})$ is concave in $k$ (cf. \cite[Lemma 6.1]{BGZ25}), and thus we can define
\[\rho(\a;J_{\blt})\coloneqq\lim_{k\to\infty}\frac{\lct(\a^k;J_{\blt})}{k}=\inf_{k\ge 1}\frac{\lct(\a^k;J_{\blt})}{k}\in [0,+\infty).\]

\begin{lemma}\label{lem-rho.-1.le.sigma}
    Suppose $J_{\blt}$ is filtration of ideals with $\lct(J_{\blt})<\infty$. Then $\vb(\a;J_{\blt}) \le\rho(\a;J_{\blt})$. In particular $\vb(\a;J_{\blt})<\infty$.
\end{lemma}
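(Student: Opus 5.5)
We show $\vb(\a;J_{\blt})\le \lct(\a^k;J_{\blt})/k$ for every $k\ge 1$ and then let $k\to\infty$. By \Cref{lem-mu.subadditivity}(2), $\vb(\a^k;J_{\blt})=k\,\vb(\a;J_{\blt})$, so it suffices to prove
\[
\vb(\q;J_{\blt})\le \lct(\q;J_{\blt})\quad\text{for every nonzero ideal } \q,
\]
applied with $\q=\a^k$. Since $\lct(J_{\blt})<\infty$, we have $\lct(\q;J_{\blt})<\infty$, as noted before the definition of $\rho$.

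To compare the two quantities, we pass through a valuation. Set $\lambda=\lct(\q;J_{\blt})<\infty$. By \Cref{thm-compute} there is a valuation $v\in\Val_X$ computing it, that is,
\[
\lambda=\frac{A(v)+v(\q)}{v(J_{\blt})},
\]
with $v(J_{\blt})>0$ and $A(v)<\infty$. Alternatively, one can avoid \Cref{thm-compute} and use the infimum formula \eqref{eq-compute} directly with an $\varepsilon$ of slack.

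The key claim is that for any valuation $w$ with $w(J_{\blt})>0$,
\[
w(\q)\ge \vb(\q;J_{\blt})\,w(J_{\blt}).
\]
This is exactly the argument of \Cref{lem-v(J.).le.mu.v(a)}: if $\q^l\subseteq J_m$, then $l\,w(\q)\ge w(J_m)\ge m\,w(J_{\blt})$, and we take the supremum over such pairs $(l,m)$. This argument works even when $\vb(\q;J_{\blt})=\infty$; in that case $w(\q)=\infty$, which is impossible for a nonzero ideal, so finiteness of $\vb$ will come out automatically. Combining the claim with $A(w)\ge 0$ gives
\[
\frac{A(w)+w(\q)}{w(J_{\blt})}\ge \frac{w(\q)}{w(J_{\blt})}\ge \vb(\q;J_{\blt}).
\]
Taking the infimum over $w$ in \eqref{eq-compute} yields $\lct(\q;J_{\blt})\ge \vb(\q;J_{\blt})$.

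Valuations with $w(J_{\blt})=0$ can be excluded from the infimum, since they contribute $+\infty$. The only subtle point is to make sure the infimum in \eqref{eq-compute} runs over valuations with $w(J_{\blt})>0$ and finite $A(w)$; since $\lambda<\infty$, such valuations exist.

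Putting the steps together: $k\,\vb(\a;J_{\blt})=\vb(\a^k;J_{\blt})\le \lct(\a^k;J_{\blt})$ for every $k\ge 1$. Dividing by $k$ and taking the infimum over $k$ gives $\vb(\a;J_{\blt})\le \rho(\a;J_{\blt})<\infty$. The filtration hypothesis is used only so that $\vb$ is defined; the main care needed is the $\infty$ bookkeeping described above.
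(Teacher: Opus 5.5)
Your proof is correct, but it takes a different route from the paper's. The paper argues purely with ideals. With $m_k=\vJ(\a^k)$, it uses the elementary inclusions $\a^k\subseteq J_{m_k}\subseteq\J(J_{m_k})\subseteq\J(m_k\cdot J_{\blt})$ (the middle one is just $I\subseteq\J(I)$) to get $\lct(\a^k;J_{\blt})\ge m_k$, then divides by $k$ and lets $k\to\infty$. Only the definitions of multiplier and asymptotic multiplier ideals are used, and finiteness of $m_k$ is forced by $\lct(\a^k;J_{\blt})<\infty$. You instead go through valuations: you combine the argument of \Cref{lem-v(J.).le.mu.v(a)} with the valuative formula \eqref{eq-compute} to get the per-ideal bound $\vb(\q;J_{\blt})\le\lct(\q;J_{\blt})$, and then use $\vb(\a^k;J_{\blt})=k\,\vb(\a;J_{\blt})$. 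This makes the conceptual content visible: your estimate really shows $\vb(\q;J_{\blt})\le\inf_w w(\q)/w(J_{\blt})$, which equals $\rho(\q;J_{\blt})$ by \Cref{prop-sigma.equal.inf}. Your handling of the case $\vb=\infty$ is also clean. The cost is that the half of \eqref{eq-compute} you need, $\lct^{\q}(J_{\blt})\ge\inf_w\big(A(w)+w(\q)\big)/w(J_{\blt})$, is the nontrivial direction of \cite[Corollary 6.9]{JM12}, since it requires interchanging the supremum over $p$ with the infimum over valuations. The reverse inequality, at least over divisorial valuations, follows directly from the definitions. Appealing to \Cref{thm-compute} is heavier still. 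You can keep your valuative picture and stay elementary by working with a single ideal $J_m$. If $\q^l\subseteq J_m$, then $\lct(\q;J_{\blt})\ge m\cdot\lct^{\q}(J_m)=\inf_E\frac{A(\ord_E)+\ord_E(\q)}{\ord_E(J_m)/m}\ge\frac{m}{l}$, which is essentially the paper's proof.
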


\begin{proof}
    May assume $\vb(\a;J_{\blt})>0$. Since $\J(I)\supseteq I$ for every ideal $I$ of $R$, for $m_k=\sup \{m \mid \a^k\subseteq J_m\}$, we have
    \[\a^{k}\subseteq J_{m_k} \subseteq \J(J_{m_k})\subseteq \J(m_k\cdot J_{\blt}) \implies \lct(\a^{k}; J_{\blt})\ge m_k.\]
    Hence,
    \[\rho(\a;J_{\blt})=\lim_{k\to\infty}\frac{\lct(\a^{k};J_{\blt})}{k}\ge \lim_{k\to\infty} \frac{m_k}{k} = \vb(\a;J_{\blt}) .\]
    We can also see $\vb(\a;J_{\blt})<\infty$ in this case since $\rho(\a;J_{\blt})<\infty$.
\end{proof}

\begin{remark}\label{rem-vb.neq.sigma}
    We can not expect $\vb(\a;J_{\blt}) =\rho(\a;J_{\blt})$ in general cases (see \cite[Example 7.2]{CP24}). The following is another example from \cite{KW07}.
    
    Let $R=\bbC[x,y]$ and $\m=(x,y)$ be the maximal ideal. Let $J_{\blt}$ the graded sequence: 
    \[J_k=\m^k \cdot (x^k, y), \quad \forall\, k\in\bbN.\]
    Then as shown in \cite[Example 3.6]{KW07}, for every $c>0$, the asymptotic multiplier ideal $\J(c\cdot J_{\blt})=\J(c\cdot \m)=\m^{\lfloor c \rfloor-1}$. It follows that $\rho(\m;J_{\blt})=1$ (one can also check this by the following \Cref{prop-sigma.equal.inf}). However, $\m^l\subseteq J_k$ if and only if $l\ge 2k$, which shows $\vb(\m;J_{\blt})=1/2\neq \rho(\m;J_{\blt})$.
    
    Furthermore, if $X$ is a smooth irreducible complex variety, and $J_{\blt}$ is a stable graded sequence in the sense of \cite[Definition 2.3]{KW07}, then according to \cite[Theorem 3.3]{KW07}, there is a suitable positive integer $C$ such that $\J(Ck\cdot J_{\blt})\subseteq J_k$ for all $k$ large enough, which implies that
    \[\vb(\a;J_{\blt}) \ge C^{-1}\rho(\a;J_{\blt})\]
    for any nonzero ideal $\a$ on $X$.
\end{remark}

In general, we cannot use the inequality \eqref{eq-v.rho} to characterize the quantity $\vb(\a;J_{\blt})$ conversely (e.g. seen by \Cref{rem-vb.neq.sigma} and the following \Cref{prop-sigma.equal.inf}), unlike the case of the classical asymptotic Samuel function in \Cref{rem-asf.Rees}. However, we will show in the following proposition that such a characterization does hold for $\rho(\a;J_{\blt})$ over $\Val_X^{<\infty}$ (and over the space of divisorial valuations on $X$), where
\[\Val_X^{<\infty}\coloneqq \{v\in\Val_X \mid A(v)<\infty\}.\]

\begin{proposition}\label{prop-sigma.equal.inf}
    Suppose $\lct(J_{\blt})<\infty$. Then (by convention setting $*/0=\infty$)
    \begin{equation}\label{eq-sigma.equal.inf}
        \rho(\a;J_{\blt})=\inf_{v\in\Val^{<\infty}_X}\frac{v(\a)}{v(J_{\blt})}=\inf_{E}\frac{\ord_E(\a)}{\ord_E(J_{\blt})},
    \end{equation}
    where the infimum in the last term is over all prime divisors $E$ over $X$.
\end{proposition}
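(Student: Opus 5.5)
Write $\sigma_1=\inf_{v\in\Val_X^{<\infty}} v(\a)/v(J_{\blt})$ and $\sigma_2=\inf_E \ord_E(\a)/\ord_E(J_{\blt})$. Since every $\ord_E$ has finite log discrepancy, $\sigma_1\le\sigma_2$. So the plan is to prove the two inequalities $\rho(\a;J_{\blt})\le \sigma_1$ and $\sigma_2\le \rho(\a;J_{\blt})$.

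\emph{Step 1: $\rho\le\sigma_1$.} Take $v\in\Val_X^{<\infty}$. We may assume $v(J_{\blt})>0$, since otherwise the quotient is $\infty$ by convention. By \eqref{eq-compute} applied with $\q=\a^k$,
\[\lct(\a^k;J_{\blt})\le \frac{A(v)+k\,v(\a)}{v(J_{\blt})}.\]
Divide by $k$ and let $k\to\infty$; because $A(v)<\infty$ this gives $\rho(\a;J_{\blt})\le v(\a)/v(J_{\blt})$. Taking the infimum over $v$ yields $\rho\le\sigma_1$.

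\emph{Step 2: $\sigma_2\le\rho$.} This is the substantive direction. Fix $k$ and put $\lambda_k=\lct(\a^k;J_{\blt})<\infty$. I would like a divisor $E$ with
\[\lambda_k\,\ord_E(J_{\blt})\ge k\,\ord_E(\a)+A(\ord_E)-\epsilon\,\ord_E(J_{\blt})\]
for a small $\epsilon>0$. Such an $E$ satisfies
\[\frac{\ord_E(\a)}{\ord_E(J_{\blt})}\le \frac{\lambda_k+\epsilon}{k}-\frac{A(\ord_E)}{k\,\ord_E(J_{\blt})}\le \frac{\lambda_k+\epsilon}{k}.\]
Letting $k\to\infty$ then gives $\sigma_2\le\rho$.

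To produce $E$, I use the formula \eqref{eq-compute}, where the infimum may be taken over divisorial valuations (as the paper states, citing \cite[Corollary 6.9]{JM12}). Since $\lambda_k$ is an infimum, there is a divisorial $v=c\,\ord_E$ with $(A(v)+k\,v(\a))/v(J_{\blt})<\lambda_k+\epsilon$. In particular $v(J_{\blt})>0$, since otherwise the ratio would be $\infty$. The inequality is invariant under scaling $v$, so it holds for $\ord_E$ itself. Rearranging gives exactly the displayed inequality, and dropping the nonnegative term $A(\ord_E)$ gives
\[\ord_E(\a)/\ord_E(J_{\blt})<(\lambda_k+\epsilon)/k.\]
Therefore $\sigma_2\le \inf_k \lambda_k/k=\rho(\a;J_{\blt})$. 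The finiteness hypothesis $\lct(J_{\blt})<\infty$ guarantees that each $\lambda_k$ is finite, so these infima are meaningful.

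\emph{Main obstacle.} The delicate point is that the infimum in \eqref{eq-compute} really may be taken over divisorial valuations with the conventions for $v(J_{\blt})=0$. If one only had the statement over all of $\Val_X$, I would instead take a valuation computing $\lambda_k$ via \Cref{thm-compute} and approximate it by retractions $r_{Y,D}(v)$ onto quasi-monomial cones. These have bounded log discrepancy, $A(r_{Y,D}(v))\le A(v)$, and $r_{Y,D}(v)(\a)=v(\a)$ once $(Y,D)$ resolves $\a$. One would then need $r_{Y,D}(v)(J_{\blt})\to v(J_{\blt})$, which holds along a sequence of log resolutions of the $J_m$ by \cite[Section 6]{JM12}. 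Finally, rational points of the cone $\QM(Y,D)$ give divisorial valuations, and by continuity of the relevant functions on the cone they approximate the quotient.

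Combining Steps 1 and 2 gives $\rho\le\sigma_1\le\sigma_2\le\rho$, which is \eqref{eq-sigma.equal.inf}.
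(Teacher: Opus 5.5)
Your proof is correct. Step~1 is the paper's first step; the routes differ in the reverse inequality.

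\textbf{The paper's route.} For each $k$ it takes a valuation $v_k\in\Val_X^{<\infty}$ that \emph{computes} $\lct(\a^k;J_{\blt})$, using the existence result \Cref{thm-compute}. This gives $\rho(\a;J_{\blt})\ge\sigma_1$. It then proves $\sigma_1=\sigma_2$ as a separate step, via continuity of $v\mapsto v(\a)$ and $v\mapsto v(J_{\blt})$ and density of divisorial valuations in $\Val_X^{<\infty}$.

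\textbf{Your route.} You take near-minimizing prime divisors in the divisorial form of \eqref{eq-compute}, which gives $\sigma_2\le\rho(\a;J_{\blt})$ directly. Together with the trivial $\sigma_1\le\sigma_2$, the chain $\rho\le\sigma_1\le\sigma_2\le\rho$ closes. You use a single black box, the Jonsson--Musta\c{t}\u{a} formula with the infimum over divisorial valuations exactly as the paper states it. The paper uses three: existence of computing valuations, continuity, and density. You also avoid the slightly delicate continuity step, where one has to ensure the divisorial approximants have bounded log discrepancy.

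\textbf{Your fallback paragraph.} It is therefore not needed. If you ever do need it, the lower bound on $r_{Y,D}(v)(J_{\blt})$ comes from resolving the asymptotic multiplier ideals $\J(m\cdot J_{\blt})$, not just the $J_m$. One uses $w(\J(m\cdot J_{\blt}))\le m\,w(J_{\blt})$ for all $w$, and $v(\J(m\cdot J_{\blt}))\ge m\,v(J_{\blt})-A(v)$. Resolving $J_m$ alone only yields the upper bound $r_{Y,D}(v)(J_{\blt})\le v(J_m)/m$.

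\textbf{What each approach buys.} The paper's route exhibits the computing valuations $v_k$ as an explicit minimizing sequence. Normalized by $v_k(J_{\blt})=1$, they satisfy $A(v_k)\le\lct(\a^k;J_{\blt})-k\,\rho(\a;J_{\blt})$, and this is the mechanism reused in \Cref{prop-val.interp.gr.seq}. For the proposition itself, your argument is the more economical one.
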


\begin{proof}
    For every $v\in\Val^{<\infty}_X$, we have $\lct(\a^k;J_{\blt})\le \big(A(v)+k\cdot v(\a)\big)/v(J_{\blt})$, yielding that
    \[\rho(\a;J_{\blt})=\lim_{k\to\infty}\frac{\lct(\a^k;J_{\blt})}{k}\le \lim_{k\to\infty}\Big(\frac{v(\a)}{v(J_{\blt})}+\frac{1}{k}\cdot \frac{A(v)}{v(J_{\blt})}\Big)=\frac{v(\a)}{v(J_{\blt})}.\]
    On the other hand, let $v_k\in\Val_X^{<\infty}$ which computes $\lct(\a^k;J_{\blt})$. Then
    \[\lct(\a^k;J_{\blt})=\frac{A(v_k)+k\cdot v_k(\a)}{v_k(J_{\blt})}>k \cdot \frac{v_k(\a)}{v_k(J_{\blt})}\ge k \inf_{v\in\Val_X^{<\infty}}\frac{v(\a)}{v(J_{\blt})}.\]
    Hence, $\rho(\a;J_{\blt})\ge \inf_{v\in\Val_{X}^{<\infty}}v(\a)/v(J_{\blt})$. We get the first equality. The next equality follows from the continuity of the functions $v \mapsto v(\a)$ and $v \mapsto v(J_{\blt})$ on $\Val_X^{<\infty}$ (\cite[Corollary 6.4]{JM12}), and the denseness of divisorial valuations in $\Val_X^{<\infty}$ (\cite[Remark 4.11]{JM12}).
\end{proof}

It was also introduced in \cite[Definition 3.1]{BLQ24} the notion of the \emph{saturation} $\wt{J}_{\blt}$ of a filtration $J_{\blt}$:
\begin{equation}\label{eq-saturation}
    \wt{J}_{t}\coloneqq \{f\in R \mid \ord_{E}(f)\ge t\cdot \ord_E(J_{\blt}) \text{ for any prime divisor $E$ over $X$}\}
\end{equation}
for each $t\in\bbR_{\ge 0}$. Then the above \Cref{prop-sigma.equal.inf} is equivalent to say that $\rho(\a;J_{\blt})=\sup\,\{t\in\bbR_{\ge 0} \mid \a\subseteq \wt{J}_{t}\}$.

\begin{corollary}\label{cor-sigma.subadditivity}
    Suppose $\lct(J_{\blt})<\infty$. Then for nonzero ideals $\a,\b$ in $R$,
    \begin{equation*}
        \begin{aligned}
            \rho(\a+\b;J_{\blt})&=\min\{\rho(\a;J_{\blt}), \rho(\b;J_{\blt})\},\\
            \rho(\a\b;J_{\blt})&\ge \rho(\a;J_{\blt})+\rho(\b;J_{\blt}).
        \end{aligned}
    \end{equation*}
\end{corollary}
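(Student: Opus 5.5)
The plan is to read both statements off the valuative formula in \Cref{prop-sigma.equal.inf}:
\[\rho(\a;J_{\blt})=\inf_{v\in\Val_X^{<\infty}}\frac{v(\a)}{v(J_{\blt})},\]
with the convention $*/0=\infty$. For each fixed $v\in\Val_X^{<\infty}$ we have the valuation identities $v(\a+\b)=\min\{v(\a),v(\b)\}$ and $v(\a\b)=v(\a)+v(\b)$ from \Cref{subsec-val}. Dividing by $v(J_{\blt})$ and then taking infima over $v$ should give both claims.

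\textbf{Sum formula.} Fix $v$ and set $c=v(J_{\blt})$.
\begin{itemize}
\item If $c>0$, then
\[\frac{v(\a+\b)}{c}=\min\Big\{\frac{v(\a)}{c},\frac{v(\b)}{c}\Big\}.\]
\item If $c=0$, all three ratios are $\infty$ by convention, so the same identity holds.
\end{itemize}
Since $\inf_v\min\{f(v),g(v)\}=\min\{\inf_v f,\inf_v g\}$ for any two functions, taking infima gives $\rho(\a+\b;J_{\blt})=\min\{\rho(\a;J_{\blt}),\rho(\b;J_{\blt})\}$.

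\textbf{Product inequality.} For each $v$ with $v(J_{\blt})>0$ we have
\[\frac{v(\a\b)}{v(J_{\blt})}=\frac{v(\a)}{v(J_{\blt})}+\frac{v(\b)}{v(J_{\blt})}\ge\rho(\a;J_{\blt})+\rho(\b;J_{\blt}).\]
When $v(J_{\blt})=0$ the left-hand ratio is $\infty$, so the inequality is trivial. Taking the infimum over $v$ yields $\rho(\a\b;J_{\blt})\ge\rho(\a;J_{\blt})+\rho(\b;J_{\blt})$.

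Nonzero ideals have finite values, so there are no indeterminate forms. The only point needing care is the $*/0=\infty$ convention, and the case split above handles it. I expect no real obstacle, since all the substantial work is already contained in \Cref{prop-sigma.equal.inf}.

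An alternative route runs through the saturation description $\rho(\a;J_{\blt})=\sup\{t\mid\a\subseteq\wt J_t\}$. Here one uses $\wt J_s\cdot\wt J_t\subseteq\wt J_{s+t}$, which follows because $\ord_E$ is additive. This gives the same two conclusions by the argument of \Cref{lem-mu.subadditivity}.
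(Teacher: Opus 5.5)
Your proof is correct and is essentially the paper's argument: both statements are read off the infimum formula of \Cref{prop-sigma.equal.inf}, using $v(\a+\b)=\min\{v(\a),v(\b)\}$ and $v(\a\b)=v(\a)+v(\b)$ and then interchanging infimum with minimum and sum. The only difference is that the paper takes the infimum over prime divisors $E$ (ratios $\ord_E(\cdot)/\ord_E(J_{\blt})$) rather than over $\Val_X^{<\infty}$, which is immaterial, and your explicit handling of the $v(J_{\blt})=0$ case is a harmless extra.
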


\begin{proof}
    By \Cref{prop-sigma.equal.inf}, we have
    \begin{equation*}
    \begin{aligned}
        \rho(\a+\b;J_{\blt})=\inf_{E}\frac{\ord_E(\a+\b)}{\ord_E(J_{\blt})}&=\inf_{E}\bigg(\min\bigg\{\frac{\ord_E(\a)}{\ord_E(J_{\blt})}, \frac{\ord_E(\b)}{\ord_E(J_{\blt})}\bigg\}\bigg)\\
        &=\min\bigg\{\inf_E\frac{\ord_E(\a)}{\ord_E(J_{\blt})}, \inf_E\frac{\ord_E(\b)}{\ord_E(J_{\blt})}\bigg\}\\
        &=\min\{\rho(\a;J_{\blt}), \rho(\b;J_{\blt})\},
    \end{aligned}
    \end{equation*}
    and
    \begin{equation*}
        \begin{aligned}
            \rho(\a\b;J_{\blt})=\inf_{E}\frac{\ord_E(\a\b)}{\ord_E(J_{\blt})}&= \inf_{E}\bigg(\frac{\ord_E(\a)}{\ord_E(J_{\blt})}+\frac{\ord_E(\b)}{\ord_E(J_{\blt})}\bigg)\\
            &\ge \inf_{E}\frac{\ord_E(\a)}{\ord_E(J_{\blt})}+\inf_{E}\frac{\ord_E(\b)}{\ord_E(J_{\blt})}\\
            &=\rho(\a;J_{\blt})+\rho(\b;J_{\blt}).
        \end{aligned}
    \end{equation*}
    The proof is complete.
\end{proof}

\subsection{A special case related to the finite valuative interpolation}

Recall that $\a_1,\ldots,\a_r$ are ideals of $R$, and $b_1,\ldots,b_r\in\bbR_{\ge 0}$, where $\sum_{b_j>0}\a_j\neq R$, i.e., $\bigcap_{b_j>0}V(\a_j)$ contains some closed point in $X$. Set
\[I_{\blt}\coloneqq \Big(\frac{1}{b_1}\cdot \a_{1}\Big)_{\blt}+\cdots+\Big(\frac{1}{b_r}\cdot \a_{r}\Big)_{\blt}.\]

\begin{lemma}\label{lem-sp.case.lct.finite}
    We have $\lct(I_{\blt})<\infty$ and $0<\vb(\a;I_{\blt})<\infty$ for $\a=\a_1\cdots\a_r$.
\end{lemma}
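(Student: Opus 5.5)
We want three things: $\lct(I_{\blt})<\infty$, $\vb(\a;I_{\blt})>0$, and $\vb(\a;I_{\blt})<\infty$ for $\a=\a_1\cdots\a_r$. Once the first holds, the third follows from \Cref{lem-rho.-1.le.sigma}, so the work is concentrated in the first two.

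\emph{Positivity.} By \Cref{lem-mu.j.le.bj.-1} we have $\vb(\a;I_{\blt})\ge\sum_j b_j$. Since at least one $b_j$ is positive, this sum is positive and we are done. We should also confirm that $I_{\blt}$ is a genuine filtration with nonzero terms. Each $\a_j$ is nonzero, so $\a_j^{\lceil m/b_j\rceil}\neq 0$ because $R$ is a domain. Hence $I_m\supseteq \a_j^{\lceil m/b_j\rceil}\neq 0$ for any $j$ with $b_j>0$.

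\emph{Finiteness of $\lct(I_{\blt})$.} We use the valuative formula \eqref{eq-compute} with $\q=R$. It suffices to exhibit one valuation $v$ with $A(v)<\infty$ and $v(I_{\blt})>0$, since then $\lct(I_{\blt})\le A(v)/v(I_{\blt})<\infty$.

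By hypothesis $\sum_{b_j>0}\a_j\neq R$, so there is a closed point $x$ (a maximal ideal $\m$) with $\a_j\subseteq\m$ for every $j$ with $b_j>0$. Take $v=\ord_E$, where $E$ is the exceptional divisor of the blow-up of $X$ at $x$. Since $R_\m$ is regular, this is the divisorial valuation $\ord_\m$, with $A(\ord_E)=n<\infty$ (or at least finite, as it is divisorial). Then $v(\a_j)\ge 1$ for all $j$ with $b_j>0$. \Cref{lem-v(I.)} then gives
\[v(I_{\blt})=\min_{b_j>0}\frac{v(\a_j)}{b_j}\ge \min_{b_j>0}\frac{1}{b_j}>0.\]
Hence $\lct(I_{\blt})\le A(v)/v(I_{\blt})<\infty$.

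The only point needing care is whether the inequality $\lct(J_{\blt})\le A(v)/v(J_{\blt})$ applies to this $v$, which requires \eqref{eq-compute} to hold as an infimum over divisorial valuations. That is the statement cited from \cite{JM12}. Alternatively, one can argue directly: $\J(\lambda\cdot I_{\blt})\subseteq\m$ once $\lambda\, v(I_{\blt})\ge A(v)$, by the definition of multiplier ideals, so $R\not\subseteq \J(\lambda\cdot I_{\blt})$.

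Finally, \Cref{lem-rho.-1.le.sigma} gives $\vb(\a;I_{\blt})\le\rho(\a;I_{\blt})<\infty$.
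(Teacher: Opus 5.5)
Your proof is correct and follows the paper's argument. Like the paper, you bound $\lct(I_{\blt})$ by $A(\ord_x)/\ord_x(I_{\blt})$ at a closed point $x\in\bigcap_{b_j>0}V(\a_j)$, get $\vb(\a;I_{\blt})>0$ from \Cref{lem-mu.j.le.bj.-1}, and get $\vb(\a;I_{\blt})<\infty$ from \Cref{lem-rho.-1.le.sigma}. One small inaccuracy: $A(\ord_x)=\operatorname{ht}\m_x$, which need not equal $n$ when $R$ is not local, but as you note only its finiteness is used.
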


\begin{proof}
    Let $x\in\bigcap_{b_j>0}V(\a_j)$ be a closed point, i.e. $\a_j\subseteq \m_{x}$ for all $j$ with $b_j>0$, yielding
    \[\lct(I_{\blt})\le \frac{A(\ord_{x})}{\ord_{x}(I_{\blt})}=\frac{A(\ord_{x})}{\min_{b_j>0}\{\ord_{x}(\a_j)/b_j\}}<\infty.\]
    We can also see $\vb(\a;I_{\blt})>0$ by \Cref{lem-mu.j.le.bj.-1}. In addition, we have $\vb(\a;I_{\blt})<\infty$ due to \Cref{lem-rho.-1.le.sigma}.
\end{proof}

For this special filtration $I_{\blt}$, we will show that $\vb(\,\cdot\,; I_{\blt})$ coincides with $\rho(\,\cdot\,;I_{\blt})$, as demonstrated by the following two lemmas.

\begin{lemma}\label{lem-Skoda.multiplier.ideal}
Let $N\in\bbZ_+$, and let $d_j\in\bbN$ satisfy $N\ge b_jd_j$ for all $j$ with $b_j>0$. Then the asymptotic multiplier ideal of $I_{\blt}$ satisfies
\[\J( cN \cdot I_{\blt})\subseteq\J(c\cdot \b), \quad \forall\, c>0,\]
where $\b=\sum_{b_j>0} \a_j^{d_j}$.
\end{lemma}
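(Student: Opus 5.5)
The plan is to compare the two multiplier ideals divisor by divisor, using the definition of $\J(\lambda\cdot\a)$ through the condition $\ord_E(f)+A(\ord_E)>\lambda\cdot\ord_E(\a)$ for all prime divisors $E$ over $X$. Concretely, the key estimate will be
\[
\ord_E(I_{Np})\ \ge\ p\cdot \ord_E(\b)\qquad\text{for all prime divisors $E$ over $X$ and all } p\in\bbZ_+ .
\]
Once this is known, any $f$ with $\ord_E(f)+A(\ord_E)>\frac{c}{p}\ord_E(I_{Np})$ for all $E$ also satisfies $\ord_E(f)+A(\ord_E)>c\cdot\ord_E(\b)$ for all $E$. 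Hence
\[
\J\Big(\tfrac{c}{p}\cdot I_{Np}\Big)\subseteq \J(c\cdot\b)\quad\text{for every } p .
\]

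To pass to the asymptotic multiplier ideal, I use that $\J(cN\cdot I_{\blt})$ is the unique maximal element of the family $\{\J(\frac{cN}{q}\cdot I_q)\}_q$. Since $I_{\blt}$ is graded, $I_q^N\subseteq I_{Nq}$. Therefore
\[
\J\Big(\tfrac{cN}{q}\cdot I_q\Big)=\J\Big(\tfrac{c}{q}\cdot I_q^N\Big)\subseteq\J\Big(\tfrac{c}{q}\cdot I_{Nq}\Big).
\]
So the maximal element is attained along indices of the form $Nq$, and with $p=q$ this gives $\J(cN\cdot I_{\blt})=\J(\frac{c}{p}\cdot I_{Np})$ for a suitable $p$. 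This yields the claim.

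For the key estimate, I unwind the definition of the sum filtration:
\[
I_{Np}=\sum_{m_1+\cdots+m_r=Np}\ \prod_j\Big(\tfrac{1}{b_j}\cdot\a_j\Big)_{m_j}.
\]
For $j$ with $b_j=0$, the factor is $0$ unless $m_j=0$, in which case it is $R$. So only the indices with $b_j>0$ contribute, and each summand is $\prod_{b_j>0}\a_j^{\lceil m_j/b_j\rceil}$ with $\sum_{b_j>0} m_j=Np$. Using $\ord_E(\a+\a')=\min$ and $\ord_E(\a\a')=\ord_E(\a)+\ord_E(\a')$, it suffices to bound one summand. Since $1/b_j\ge d_j/N$ by hypothesis,
\[
\sum_{b_j>0}\Big\lceil\tfrac{m_j}{b_j}\Big\rceil\ord_E(\a_j)\ \ge\ \sum_{b_j>0}\tfrac{m_j}{N}\,d_j\ord_E(\a_j)\ \ge\ \tfrac{1}{N}\Big(\sum m_j\Big)\min_{b_j>0} d_j\ord_E(\a_j)\ =\ p\cdot\ord_E(\b),
\]
because $\ord_E(\b)=\min_{b_j>0}d_j\ord_E(\a_j)$.

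The only delicate point is the bookkeeping around the asymptotic multiplier ideal, namely justifying the reduction to indices divisible by $N$. That step uses only the graded property $I_q^N\subseteq I_{Nq}$ and the monotonicity of multiplier ideals under inclusion of ideals. Skoda's Theorem is not needed here; I expect it to enter only in the subsequent step comparing $\J(c\cdot\b)$ with powers of $\b$.
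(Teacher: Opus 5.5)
Your proof is correct. Its core is the same exponent comparison the paper uses, namely $1/b_j\ge d_j/N$ (equivalently $N\lceil p_j/b_j\rceil\ge p_jd_j$), but you run it valuatively rather than ideal-theoretically.

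The paper establishes the ideal inclusion $\ol{I_p^N}\subseteq\ol{\b^p}$. This needs the integral-closure identity $\ol{(J_1+\cdots+J_r)^l}=\ol{J_1^l+\cdots+J_r^l}$. It then uses that multiplier ideals are unchanged under integral closure, together with $\J(\frac{c}{p}\cdot\b^p)=\J(c\cdot\b)$.

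You instead read off $\ord_E(I_{Np})\ge p\,\ord_E(\b)$ directly from the divisorial definition of $\J$. Here the sum-of-ideals step is automatic, because $\ord_E$ of a sum of ideals is the minimum, so you need no integral-closure facts. Your estimate is in effect the valuative form of the paper's inclusion, and your route is the more self-contained of the two.

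One simplification: your detour through the indices $Nq$ via $I_q^N\subseteq I_{Nq}$ is valid but unnecessary. The same computation gives $N\,\ord_E(I_q)\ge q\,\ord_E(\b)$ for every $q$. Hence $\J(\frac{cN}{q}\cdot I_q)\subseteq\J(c\cdot\b)$ holds for every member of the family, which is exactly how the paper concludes.
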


\begin{proof}
    It suffices to prove
    \[\J\Big(\frac{cN}{p}\cdot I_{p}\Big)\subseteq \J(c\cdot \b), \quad \forall\, p\gg 1.\]
    Note that $N \lceil p_j/b_j\rceil\ge p_jd_j$ for all $j$ with $b_j>0$ and $p_j\in\bbN$. Thus, the integral closure
    \[\ol{I_p^{ N}}=\ol{\bigg(\sum_{p_1+\cdots+p_r=p} \prod_{b_j>0}\a_j^{\lceil p_j/b_j \rceil} \bigg)^{N}}\subseteq \ol{\sum_{p_1+\cdots+p_r=p} \prod_{b_j>0}\a_j^{p_jd_j}}=\ol{\b^p}.\]
    Above we used the fact: $\ol{(J_1+\cdots+J_r)^l}=\ol{J_1^l+\cdots+J_r^l}$ for $l\in\bbN$ and ideals $J_1,\ldots,J_r$ (cf. \cite[Example 1.1.2 \& Corollary 1.3.1]{HS06}). It follows that for every $p\gg 1$,
    \begin{equation*}
        \begin{aligned}
           \J\Big(\frac{cN}{p}\cdot I_{p}\Big)=\J\Big(\frac{c}{p}&\cdot I^{N}_{p}\Big)=\J\Big(\frac{c}{p}\cdot \ol{I_p^{N}}\Big)\\
           &\subseteq \J\Big(\frac{c}{p}\cdot \ol{\b^p}\Big)=\J\Big(\frac{c}{p}\cdot \b^p\Big)=\J(c\cdot\b), 
        \end{aligned}
    \end{equation*}
    by \cite[Corollary 9.6.17]{La04}. The proof is complete.
\end{proof}

\begin{lemma}\label{lem-finite.inter.case.rho.-1.equal.sigma}
    For every nonzero ideal $\a_0$ of $R$, we have $\vb(\a_0;I_{\blt})=\rho(\a_0;I_{\blt})$.
\end{lemma}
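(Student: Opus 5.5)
The plan is to prove the two inequalities separately. The inequality $\vb(\a_0;I_{\blt})\le\rho(\a_0;I_{\blt})$ is already available: \Cref{lem-sp.case.lct.finite} gives $\lct(I_{\blt})<\infty$, so \Cref{lem-rho.-1.le.sigma} applies. It remains to show $\rho(\a_0;I_{\blt})\le\vb(\a_0;I_{\blt})$. This is trivial if $\rho:=\rho(\a_0;I_{\blt})=0$, so assume $\rho>0$. The idea is to convert membership in asymptotic multiplier ideals into membership in the $I_m$ by combining \Cref{lem-Skoda.multiplier.ideal} with Skoda's \Cref{thm-Skoda}.

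\emph{Step 1: set up the parameters.} Fix a large $N\in\bbZ_+$. Put $d_j=\lfloor N/b_j\rfloor$ for each $j$ with $b_j>0$, so that $N\ge b_jd_j\ge N-b_j$, and set $\b=\sum_{b_j>0}\a_j^{d_j}$. Let $B=\max_j b_j$. By definition of $\rho$ as an infimum, $\lct(\a_0^k;I_{\blt})\ge k\rho$ for every $k$. By the definition of the jumping number, $\a_0^k\subseteq\J(\lambda\cdot I_{\blt})$ for all $\lambda<\lct(\a_0^k;I_{\blt})$, hence in particular for all $\lambda<k\rho$.

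\emph{Step 2: pass to powers of $\b$.} For $k$ large, choose the integer $c=c_k=\lceil k\rho/N\rceil-1$, so that $cN<k\rho$ and $c\ge n$. Then \Cref{lem-Skoda.multiplier.ideal} and \Cref{thm-Skoda} give
\[\a_0^k\subseteq\J(cN\cdot I_{\blt})\subseteq\J(c\cdot\b)=\J(\b^c)\subseteq\b^{c-n+1}.\]

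\emph{Step 3: compare powers of $\b$ with $I_{\blt}$.} We need an explicit inclusion $\b^l\subseteq I_{m(l)}$. The ideal $\b^l$ is a sum of products $\prod_{b_j>0}\a_j^{d_jl_j}$ with $\sum_j l_j=l$. Put $m_j=\lfloor b_jd_jl_j\rfloor$. Then $\lceil m_j/b_j\rceil\le d_jl_j$, so $\a_j^{d_jl_j}\subseteq(\frac1{b_j}\cdot\a_j)_{m_j}\subseteq I_{m_j}$. Since $I_{\blt}$ is graded and decreasing, each such product lies in $I_{\sum m_j}$, and
\[\textstyle\sum_j m_j\ge\sum_j(b_jd_jl_j-1)\ge l(N-B)-r.\]
Hence $\b^l\subseteq I_{l(N-B)-r}$.

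\emph{Step 4: conclude.} Taking $l=c_k-n+1$ gives
\[\nu_{I_{\blt}}(\a_0^k)\ge (c_k-n+1)(N-B)-r\ge\Big(\frac{k\rho}{N}-1-n\Big)(N-B)-r.\]
Dividing by $k$ and letting $k\to\infty$ yields $\vb(\a_0;I_{\blt})\ge\rho(1-B/N)$. Letting $N\to\infty$ gives $\vb(\a_0;I_{\blt})\ge\rho$.

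The main point needing care is the bookkeeping in Step 3. The rounding losses must be controlled so that they are $O(l)$ with constant $B$ independent of $N$, plus a term independent of $l$; that is what makes the $N\to\infty$ limit work. The indices $j$ with $b_j=0$ have to be checked against the paper's convention for $(\lambda\cdot\a)_{\blt}$ with $\lambda=\infty$. Under that convention their summands are zero for $m\ge1$, so they can be omitted from $I_{\blt}$ and from $\b$ throughout.
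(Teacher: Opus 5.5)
Your proof is correct and follows essentially the same route as the paper: the upper bound comes from \Cref{lem-rho.-1.le.sigma}, then $\a_0^k\subseteq\J(cN\cdot I_{\blt})\subseteq\J(\b^{c})\subseteq\b^{c-n+1}$ via \Cref{lem-Skoda.multiplier.ideal} and Skoda's theorem, and finally $\b^{l}$ is placed in some $I_m$ with $m\ge l(N-B)-r$ before letting $N\to\infty$. The only differences are cosmetic. You choose $c_k$ from $k\rho$ rather than from $\lct(\a_0^k;I_{\blt})$, and you bound $\b^l$ term by term, whereas the paper uses $\b\subseteq I_{e_N}$ with $e_N=\min_{b_j>0}\lfloor N-b_j\rfloor$ together with the graded property.
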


\begin{proof}
    We only need to prove $\vb(\a_0;I_{\blt})\ge\rho(\a_0;I_{\blt})$. Without loss of generality, we assume $\rho(\a_0;I_{\blt})>0$. Let $N\gg 1$ be a large enough positive integer, and $d_j\in\bbN$ such that
    \[\frac{d_j}{N}\le \frac{1}{b_j}\le \frac{d_j}{N}+\frac{1}{N}, \quad \forall\, j \text{ with } b_j>0.\]
    For every $l\in\bbZ_+$, let $k_l=\lceil \lct(\a_0^l;I_{\blt})/N \rceil-1$, i.e.
    \[N k_l<\lct(\a_0^l;I_{\blt})\le N(k_l+1),\]
    which shows $\a_0^l\subseteq \J(Nk_l \cdot I_{\blt})$. Meanwhile, by \Cref{lem-Skoda.multiplier.ideal}, $\J(Nk_l \cdot I_{\blt})\subseteq \J(\b^{k_l})$, where $\b=\sum_{b_j\ge 0} \a_j^{d_j}$. Now applying Skoda's Theorem (\Cref{thm-Skoda}) to $\b$, we get
    \[\J(\b^{k_l})\subseteq \b^{k_l-n+1}, \quad \forall\, k_l\ge n.\]
    In addition, since $\frac{1}{b_j}\le \frac{d_j}{N}+\frac{1}{N}$ for all $j$ with $b_j>0$, if let $e_N=\min_{b_j>0} \lfloor N-b_j \rfloor$, then we can see 
    \[I_{e_N}\supseteq \sum_{b_j>0} \a_j^{\lceil e_N/b_j\rceil}\supseteq \sum_{b_j\ge 0} \a_j^{d_j}=\b.\]
    In conclusion, when $k_l\ge n$, we have
    \[\a_0^{l}\subseteq\J(Nk_l \cdot I_{\blt})\subseteq\J(\b^{k_l})\subseteq\b^{k_l-n+1}\subseteq I_{e_N}^{k_l-n+1}\subseteq I_{(k_l-n+1)e_N}.\]
    This implies that
    \[\vb(\a_0;I_{\blt})\ge \limsup_{l\to \infty}\frac{(k_l-n+1)e_N}{l}=\frac{\rho(\a_0;I_{\blt})\cdot e_N}{N}, \quad \forall\, N\gg 1,\]
    where $ \rho(\a_0;I_{\blt})=\lim_{l\to \infty}Nk_l/l$ by the choice of $k_l$. Note that $N/e_N \to 1$ when $N\to \infty$, so we get $\vb(\a_0;I_{\blt})\ge \rho(\a_0;I_{\blt})$. The proof is complete.
\end{proof}

\begin{remark}
    The fact that $\vb(\a_0;I_{\blt})=\rho(\a_0;I_{\blt})$ for all $\a_0$ is equivalent to say
    \[\mathcal{K}(I_{\blt})=\wt{I}_{\blt},\]
    where $\wt{I}_{\blt}$ is the saturation of $I_{\blt}$ defined in \cite[Definition 3.1]{BLQ24} by \eqref{eq-saturation}, and the filtration $\mathcal{K}(I_{\blt})=\{K(I_{\blt})_t\}_{t\ge 0}$ is defined in \cite[Theorem 5.5]{CP24} by
    \[K(I_{\blt})_t\coloneqq \{f\in R \mid \vbI(f)\ge t\}, \quad \forall\,t\in\bbR_{\ge 0}.\]
\end{remark}

We prove the following lemma, where indeed the implication (3) $\Rightarrow$ (1) in \Cref{thm-finite.interp} is essentially due to this lemma, in combination with \Cref{lem-finite.inter.case.rho.-1.equal.sigma}.

\begin{lemma}\label{lem-finite.val.interp.exist.QM}
    Let $\a_0$ be a nonzero ideal of $R$ with $\vb(\a_0;I_{\blt})>0$. Suppose $(Y,D)$ is a log smooth pair over $X$ which gives a log resolution of $\a_1\cdots\a_r$. Then there exists a quasi-monomial valuation $v\in \QM(Y,D)$ such that
    \[\vb(\a_0;I_{\blt})=\frac{v(\a_0)}{v(I_{\blt})}.\]
\end{lemma}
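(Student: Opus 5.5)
The plan is to combine \Cref{lem-finite.inter.case.rho.-1.equal.sigma} with \Cref{prop-sigma.equal.inf}, which together give
\[\vb(\a_0;I_{\blt})=\rho(\a_0;I_{\blt})=\inf_{v\in\Val_X^{<\infty}}\frac{v(\a_0)}{v(I_{\blt})}.\]
Then I would use the retraction $r_{Y,D}$ to push this infimum into $\QM(Y,D)$. Finally, I would use a compactness argument on a slice of the cone $\QM(Y,D)$ to show that the infimum is attained. Note that by \Cref{lem-sp.case.lct.finite} and \Cref{lem-rho.-1.le.sigma} the common value is finite, and by hypothesis it is positive.

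\textbf{Reduction to $\QM(Y,D)$.} Let $v\in\Val_X^{<\infty}$ and put $w=r_{Y,D}(v)$. Since $(Y,D)$ is a log resolution of $\a_1\cdots\a_r$, it is also a log resolution of each $\a_j$. Hence $w(\a_j)=v(\a_j)$ for all $j$, and \Cref{lem-v(I.)} gives $w(I_{\blt})=v(I_{\blt})$. On the other hand $w\le v$, so $w(\a_0)\le v(\a_0)$. Since $A(w)<\infty$, the valuation $w$ also lies in $\Val_X^{<\infty}$. Therefore
\[\inf_{\Val_X^{<\infty}}\frac{v(\a_0)}{v(I_{\blt})}=\inf_{w\in\QM(Y,D)}\frac{w(\a_0)}{w(I_{\blt})}=:\lambda,\]
with $0<\lambda<\infty$.

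\textbf{Attaining the infimum.} The ratio is invariant under scaling, so I would restrict to the compact slice $\Sigma=\{w\in\QM(Y,D)\mid \sum_i w(D_i)=1\}$, which is a finite union of simplices. On each simplicial cone, write $\a_j\cdot\O_Y=\O_Y(-\sum_i c_{ji}D_i)$. Then $w(\a_j)=\sum_i c_{ji}w(D_i)$ is linear, so $w(I_{\blt})=\min_{b_j>0}w(\a_j)/b_j$ is continuous and piecewise linear. The function $w\mapsto w(\a_0)$ is continuous on $\QM(Y,D)$ by \cite[Corollary 6.4]{JM12}. Take a minimizing sequence $w_k\in\Sigma$ and pass to a subsequence converging to some $w\in\Sigma$. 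If $w(I_{\blt})>0$, the ratio is continuous at $w$ and equals $\lambda$ there, and we are done.

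\textbf{Main obstacle: the degenerate case $w(I_{\blt})=0$.} In this case the bound $w_k(\a_0)\le(\lambda+1)w_k(I_{\blt})$ forces $w(\a_0)=0$, and continuity alone gives nothing. My first attempt would be to change the normalization so that the denominator stays away from $0$. Within a fixed simplicial cone with coordinates $t_i=w(D_i)$, the quantity $w(I_{\blt})$ depends only on the coordinates $t_i$ for $D_i$ in the support of some $\a_j$. Moreover, quasi-monomial valuations are monotone in the weights, so lowering the weights $t_i$ of components outside these supports does not change $w(I_{\blt})$ and can only decrease $w(\a_0)$. Using this, I would replace each $w_k$ by the valuation with those extra weights set to zero. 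I would then normalize instead by $w_k(I_{\blt})=1$, equivalently by a bound on the relevant weights. The hope is that after this projection, the set $\{w(I_{\blt})=1\}$ restricted to the relevant faces is compact, since the minimum of the linear forms $\sum_i c_{ji}t_i$ equal to $1$ bounds the relevant coordinates. One can then extract a limit with $w(I_{\blt})=1$ and $w(\a_0)=\lambda$. Verifying that compactness, which has to be done face by face and handled for components meeting only some of the $\a_j$, is the step I expect to require the most care.
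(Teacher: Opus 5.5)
Your reduction to $\QM(Y,D)$ (via \Cref{lem-finite.inter.case.rho.-1.equal.sigma}, \Cref{prop-sigma.equal.inf} and the retraction) and your handling of a nondegenerate limit coincide with the paper's proof. The paper normalizes by $A(v)=1$ rather than $\sum_i w(D_i)=1$, and simply asserts that $\beta(v)=v(\a_0)/v(I_{\blt})$ is continuous on that compact slice. Your worry about $w(I_{\blt})=0$ is legitimate, and the paper glosses over it. Take $R=\bbC[x,y]$, $\a_1=(x)$, $\a_2=(y)$, $b_1=b_2=1$, $\a_0=(x)$, $Y=X$, $D=\{xy=0\}$. Then $\beta(\val_t)=t_1/\min(t_1,t_2)$, which equals $1$ at $(1/k,1-1/k)$ but is $\infty$ (by the convention $*/0=\infty$) at the limit point $(0,1)$.

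The gap is in your repair. Normalizing by $w(I_{\blt})=1$ is not equivalent to a bound on the relevant weights. The minimum over $j$ of the forms $\frac{1}{b_j}\sum_i c_{ji}t_i$ being $1$ only bounds the coordinates in the support of an $\a_j$ that attains the minimum. In the example above, $\{w(I_{\blt})=1\}\supseteq\{(1,s)\mid s\ge 1\}$ and $\beta\equiv 1$ on this ray. So a minimizing sequence can run off to infinity, and no face-by-face check will give compactness; here there are no irrelevant components to project away.

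What is missing is to apply your monotonicity observation to the whole polyhedron rather than only to irrelevant coordinates.
\begin{enumerate}
\item On a simplicial cone $\sigma\cong\bbR^s_{\ge0}$ of $\QM(Y,D)$, set $P_\sigma=\{t\in\bbR^s_{\ge 0}\mid \sum_i c_{ji}t_i\ge b_j \text{ for all } j \text{ with } b_j>0\}$. If $P_\sigma=\emptyset$, then $\beta\equiv\infty$ on $\sigma$ and this cone can be ignored.
\item Scaling gives $\inf_\sigma\beta=\inf_{t\in P_\sigma}\val_t(\a_0)$. Moreover $\beta(\val_t)\le\val_t(\a_0)$ for every $t\in P_\sigma$.
\item Since all $c_{ji}\ge 0$, the recession cone of $P_\sigma$ is $\bbR^s_{\ge0}$. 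By Minkowski--Weyl, $P_\sigma=Q_\sigma+\bbR^s_{\ge0}$, where $Q_\sigma$ is the compact convex hull of the vertices.
\item Monotonicity of $t\mapsto\val_t(\a_0)$ gives $\inf_{P_\sigma}\val_t(\a_0)=\inf_{Q_\sigma}\val_t(\a_0)$. This is attained at some $q\in Q_\sigma$ by continuity.
\item Then $\inf_\sigma\beta\le\beta(\val_q)\le\val_q(\a_0)=\inf_\sigma\beta$, so $\val_q$ is a minimizer on $\sigma$.
\end{enumerate}
Taking the best of the finitely many cones gives the required $v\in\QM(Y,D)$. Your projection step then becomes superfluous, since vertices of $P_\sigma$ automatically have zero weight on components outside the supports of the $\a_j$.
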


\begin{proof}
    We have $\lct(I_{\blt})<\infty$ by \Cref{lem-sp.case.lct.finite} and hence $0<\vb(\a_0;I_{\blt})<\infty$. Let $r_{Y,D}\colon \Val\to \QM(Y,D)$ be the retraction map. Note that $r_{Y,D}(v)(\a_0)\le v(\a_0)$ and $r_{Y,D}(v)(\a_j)=v(\a_j)$ for $j=1,\ldots,r$, since $(Y,D)$ gives a log resolution of $\a_1\cdots\a_r$. Then it follows from
    \[\beta(v)\coloneqq\frac{v(\a_0)}{v(I_{\blt})}=\frac{v(\a_0)}{\min_{b_j>0}\{v(\a_j)/b_j\}}\]
    that $\beta(r_{Y,D}(v))\le \beta(v)$ for all $v\in\Val_X$, which implies ($*/0=\infty$)
    \[\inf_{v\in\Val^{<\infty}_X}\beta(v)=\inf_{v\in\QM(Y,D)}\beta(v)=\inf_{v\in\QM(Y,D), \, A(v)=1}\beta(v),\]
    where the last equality above holds because $\beta$ is invariant under rescaling. Since $\QM(Y,D)\cap\{v\mid A(v)=1\}$ is a compact subset of $\QM(Y,D)$ (cf. \Cref{sec-qm.log.dis.}), and $\beta(v)$ is continuous on this compact set, we can see that the infimum must be achieved by some $v\in\QM(Y,D)$, where the infimum exactly equals to $\vb(\a_0;I_{\blt})$ by \Cref{prop-sigma.equal.inf} and \Cref{lem-finite.inter.case.rho.-1.equal.sigma}.
\end{proof}

\subsection{Proof of \Cref{thm-finite.interp}}

Now we prove \Cref{thm-finite.interp}.

\begin{proof}[Proof of \Cref{thm-finite.interp}]
    (2) $\Rightarrow$ (1) is obvious, and (1) $\Rightarrow$ (3) is due to \Cref{prop-v(aj)=bj.implies.mu=bj-1}. It suffices to prove (3) $\Rightarrow$ (2). By \Cref{lem-finite.val.interp.exist.QM} and \Cref{lem-sp.case.lct.finite}, there exists a quasi-monomial valuation $v$ on $R$ such that $v(I_{\blt})=1$ and $v(\a)=\vb(\a;I_{\blt})=\sum_{j=1}^r b_j$. It follows from $v(I_{\blt})=1$ that $v(\a_j)\ge b_j$ for $j=1,\ldots, r$, due to \Cref{lem-v(I.)}. Hence, since $v(\a)=\sum_{j=1}^r v(\a_j)$, we get $v(\a_j)=b_j$ for all $j=1,\ldots, r$. The proof is complete. 
\end{proof}

It is natural to ask whether $\vbI(\a_j) = b_j$ for all $j = 1, \ldots, r$ suffices to ensure the existence of a valuation $v$ such that $v(\a_j) = b_j$ for all $j$. However, the following example demonstrates that considering the asymptotic Samuel function at the product $\a = \a_1 \cdots \a_r$ is essential.

\begin{example}
Consider $R=\bbC[x,y]$. Let $\a_1=\m^6+(x)$, $\a_2=\m^6+(y)$ and $\a_3=\m^4+(xy)$, where $\m=(x,y)$. Let $b_1=1$, $b_2=1$ and $b_3=3$. Then one can check that $\vbI(\a_1)=1$, $\vbI(\a_2)=1$, $\vbI(\a_3)=3$ and $\vbI(\a_1\a_2\a_3)=6$. Hence, even if $\a_1$, $\a_2$ and $\a_3$ are all $\m$-primary, we can not deduce $\vbI(\a_1\a_2\a_3)=b_1+b_2+b_3$ from $\vb(\a_j;I_{\blt})=b_j$ for $j=1,2,3$.
\end{example}

\section{Existence of infinite valuative interpolation}\label{sec-i.v.i}

In this section, $R$ is an excellent regular domain of equicharacteristic $0$ and of dimension $n$. $X=\Spec R$.

\subsection{Valuation approximation method}

To characterize the existence of an infinite valuative interpolation, we first investigate whether the infimum in the middle term of \eqref{eq-sigma.equal.inf} is attained by some valuation in $\Val_X^{<\infty}$. We use the valuation approximation method, which is closer to the approach in \cite{BGMY25}.

Let $J_{\blt}$ be a filtration of ideals in $R$.

\begin{lemma}\label{lem-exist.Afinite.implies.sup.finite}
    Suppose $\lct(J_{\blt})<\infty$. Let $\a$ be a nonzero ideal of $R$. If there exists $v\in\Val^{<\infty}_X$ such that $\rho(\a;J_{\blt})=v(\a)/v(J_{\blt})$, then
    \[\sup_{k\in\bbN}\Big(\lct(\a^k; J_{\blt})-\rho(\a;J_{\blt})\cdot k\Big)<\infty.\]
\end{lemma}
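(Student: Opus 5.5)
The plan is to plug the valuation $v$ into the valuative formula \eqref{eq-compute} for the asymptotic jumping number. This gives an upper bound on $\lct(\a^k;J_{\blt})$ that is linear in $k$ with slope exactly $\rho(\a;J_{\blt})$ and a constant error term.

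First I need $v(J_{\blt})>0$. If $v(J_{\blt})=0$, then under the convention $*/0=\infty$ the identity $\rho(\a;J_{\blt})=v(\a)/v(J_{\blt})$ would force $\rho(\a;J_{\blt})=\infty$. This contradicts $\rho(\a;J_{\blt})\in[0,+\infty)$, which holds because $\lct(J_{\blt})<\infty$. So $0<v(J_{\blt})<\infty$. Here $v(J_{\blt})<\infty$ holds because $J_m\neq 0$ for some $m$, so the limit defining $v(J_{\blt})$ is a finite real number. Also $A(v)<\infty$, since $v\in\Val_X^{<\infty}$. It follows that $v(\a)=\rho(\a;J_{\blt})\,v(J_{\blt})<\infty$.

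Next, by \eqref{eq-compute}, whose infimum runs over all valuations in $\Val_X$ (and in particular includes $v$), for every $k\in\bbN$ we have
\[\lct(\a^k;J_{\blt})\le \frac{A(v)+v(\a^k)}{v(J_{\blt})}=\frac{A(v)}{v(J_{\blt})}+k\cdot\frac{v(\a)}{v(J_{\blt})}=\frac{A(v)}{v(J_{\blt})}+k\,\rho(\a;J_{\blt}).\]
This is the same inequality already used in the first half of the proof of \Cref{prop-sigma.equal.inf}. Rearranging gives
\[\lct(\a^k;J_{\blt})-\rho(\a;J_{\blt})\,k\le \frac{A(v)}{v(J_{\blt})}\quad\text{for all }k.\]
The right-hand side is a finite constant independent of $k$, so taking the supremum over $k$ proves the claim.

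The only point needing care is that \eqref{eq-compute} may be applied with an arbitrary $v\in\Val_X$, and not merely with divisorial or quasi-monomial valuations. This is the content of \cite[Corollary 6.9]{JM12}, in which the infimum may equivalently be taken over all of $\Val_X$, and it is exactly how the upper bound was used in \Cref{prop-sigma.equal.inf}. Everything else is routine. As a side remark, the supremum is also bounded below by $0$, because $\rho(\a;J_{\blt})=\inf_k \lct(\a^k;J_{\blt})/k$ by concavity; this is not needed for the statement.
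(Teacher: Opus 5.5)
Your proposal is correct and follows the same route as the paper: both substitute $v$ into the valuative formula \eqref{eq-compute} to get $\lct(\a^k;J_{\blt})\le A(v)/v(J_{\blt})+k\,\rho(\a;J_{\blt})$. The paper simply normalizes $v(J_{\blt})=1$ first, and your explicit check that $0<v(J_{\blt})<\infty$ justifies that normalization, which the paper leaves implicit.
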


\begin{proof}
May assume $v(J_{\blt})=1$ after rescaling. Then for every $k\in\bbN$,
    \[\lct(\a^k;J_{\blt})\le\frac{A(v)+v(\a^k)}{v(J_{\blt})}=A(v)+\rho(\a;J_{\blt})\cdot k,\]
    which implies $\sup_{k\in\bbN}\Big(\lct(\a^k; J_{\blt})-\rho(\a;J_{\blt})\cdot k\Big)\le A(v)<\infty$.
\end{proof}

Let $\xi\in X$ be a point, and $\m_{\xi}$ the ideal defining the closure of $\xi$. Let $\ord_{\xi}$ be the associated order valuation with center $\xi$:
\[\ord_{\xi}(f)\coloneqq \max\big\{k\in\bbN \mid f\in \m_{\xi}^{k}\big\}.\]
Set
\[\Val_{X,\xi}^{<\infty}\coloneqq \{v\in\Val_X\mid c_X(v)=\xi, \ A(v)<\infty\}.\]

\begin{proposition}\label{prop-val.interp.gr.seq}
    Assume $\ord_{\xi}(J_{\blt})>0$ and $\m_{\xi}^p\subseteq J_{q}$ for some $p,q\in\bbZ_+$. Let $\a$ be a nonzero ideal of $R$ with $\a\subseteq\m_{\xi}$. Then the following two statements are equivalent:
    \begin{enumerate}
        \item There exists $v\in\Val_{X,\xi}^{<\infty}$ such that $v(J_{\blt})=1$ and $v(\a)=\rho(\a;J_{\blt})$;
        \item It holds that
        \[M\coloneqq \sup_{k\in\bbN}\Big(\lct(\a^k;J_{\blt})-\rho(\a;J_{\blt})\cdot k\Big)<\infty.\]
    \end{enumerate}  
    Moreover, the valuation $v$ in (1) can be chosen with $A(v)\le M$ and $v(\m_{\xi})\ge q/p$ if (2) holds.
\end{proposition}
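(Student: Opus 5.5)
(1) $\Rightarrow$ (2) is \Cref{lem-exist.Afinite.implies.sup.finite}: with $v(J_{\blt})=1$ we get $\lct(\a^k;J_{\blt})\le A(v)+k\,v(\a)=A(v)+k\rho(\a;J_{\blt})$, so $M\le A(v)<\infty$. The hypotheses give $\lct(J_{\blt})\le A(\ord_\xi)/\ord_\xi(J_{\blt})<\infty$, so $\rho$ is defined.

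For (2) $\Rightarrow$ (1) I will use valuation approximation. For each $k$, apply \Cref{thm-compute} to $\q=\a^k$ to get $v_k\in\Val_X^{<\infty}$ computing $\lct(\a^k;J_{\blt})$, normalized so that $v_k(J_{\blt})=1$. The first task is to force the center to be $\xi$. Since $\a\subseteq\m_\xi$ and $\m_\xi^p\subseteq J_q$, the ideal $\J(J_{\blt}^{\lambda}):\a^k$ should contain a power of $\m_\xi$ and be contained in $\m_\xi$, so $\xi$ is a generic point of a component of $V(\J(J_{\blt}^{\lambda}):\a^k)$; then \Cref{thm-compute} yields $c_X(v_k)=\xi$. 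If this check fails, I will instead localize at $\xi$ and work in $\O_{X,\xi}$, where only centers in $\overline{\{\xi\}}$ matter. From $\m_\xi^{pm}\subseteq J_{qm}$ we get $p\,v_k(\m_\xi)\ge q\,v_k(J_{\blt})=q$, so $v_k(\m_\xi)\ge q/p$. The computing identity gives
\[A(v_k)+k\,v_k(\a)=\lct(\a^k;J_{\blt})\le k\rho+M,\]
and $v_k(\a)\ge \rho$ by \Cref{prop-sigma.equal.inf}. Hence $A(v_k)\le M$ and $\rho\le v_k(\a)\le \rho+M/k$.

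The main step is extracting a limit. The set $\{v\in\Val_{X,\xi}:A(v)\le M\}$, or the part of it centered in $\overline{\{\xi\}}$, is compact by \cite[Proposition 5.9]{JM12}, and $A$ is lower semicontinuous. A subsequence $v_k$ therefore converges to some $v$ with $A(v)\le M$. The maps $w\mapsto w(\a)$ and $w\mapsto w(\m_\xi)$ are continuous, so $v(\a)=\rho$ and $v(\m_\xi)\ge q/p>0$. In particular $v$ is nontrivial and its center lies in $V(\m_\xi)$; since $\xi$ is the generic point there and the center is specialization-closed in the limit, I take $c_X(v)=\xi$, which is the step I will treat most carefully.

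It remains to show $v(J_{\blt})=1$, which I expect to be the hardest point because $w\mapsto w(J_{\blt})$ is only upper semicontinuous, being an infimum of continuous functions. Upper semicontinuity gives $v(J_{\blt})\ge\limsup v_k(J_{\blt})=1$. For the reverse, use \cite[Corollary 6.4]{JM12}, which says $w(J_{\blt})$ is continuous on sets $\{A\le M\}$ when $\lct(J_{\blt})<\infty$; if that is unavailable, use $\rho\le v(\a)/v(J_{\blt})$ from \Cref{prop-sigma.equal.inf}. Since $v(\a)=\rho>0$ in the relevant case, this gives $v(J_{\blt})\le 1$. The case $\rho=0$ has to be handled separately; there I will rescale, or argue that $\rho>0$ because $\ord_\xi(\a)>0$ together with $\m_\xi^p\subseteq J_q$ forces it. Either way $v(J_{\blt})=1$, $v(\a)=\rho(\a;J_{\blt})$, $A(v)\le M$ and $v(\m_\xi)\ge q/p$.
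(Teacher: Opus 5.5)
Your argument is the paper's own: computing valuations $v_k$ from \Cref{thm-compute} normalized by $v_k(J_\blt)=1$, the bounds $A(v_k)\le M$ and $v_k(\m_\xi)\ge q/p$, compactness via \cite[Proposition 5.9]{JM12}, and continuity of $w\mapsto w(\a)$ to get $v(\a)=\rho$. In the compact set you must keep the constraint $w(\m_\xi)\ge q/p$, as the paper does; otherwise $t\,v\to$ trivial valuation as $t\to 0$.

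The one real variation is your fallback proof of $v(J_\blt)=1$, which avoids \cite[Corollary 6.4]{JM12}. Upper semicontinuity gives $v(J_\blt)\ge 1$. \Cref{prop-sigma.equal.inf} gives $\rho\le v(\a)/v(J_\blt)=\rho/v(J_\blt)$, hence $v(J_\blt)\le 1$. This needs $\rho>0$, which holds: $w(\a)\ge w(\m_\xi)$ and $w(J_\blt)\le \frac{p}{q}w(\m_\xi)$ for every $w$, so $\rho\ge q/p$.

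The genuine gap is the centering of $v_k$, and the paper's proof has the same hole: it asserts the centering from the same two hypotheses. The inclusion $\J(J_\blt^{\lambda}):\a^k\subseteq\m_\xi$ that you want means $\a^k\not\subseteq\J(J_\blt^{\lambda})\O_{X,\xi}$. In other words, the jumping number at $\xi$ must equal the global one, and nothing forces this unless $\xi$ is a closed point.

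\emph{Closed $\xi$.} Here it is automatic. $J_\blt$ is trivial off $V(J_q)$, so $V(\J(J_\blt^\lambda):\a^k)$ is contained in $V(J_q)\subseteq V(\m_\xi)=\{\xi\}$. It is nonempty because $\a^k\not\subseteq\J(J_\blt^\lambda)$.

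\emph{Non-closed $\xi$.} Here the statement itself is false. Take $R=\bbC[x,y]$, $\xi$ the generic point of $V(x)$, $J_m=x^m(x,y)^m$ and $\a=(x)$. Then $\ord_\xi(J_\blt)=1$ and $\m_\xi^2\subseteq J_1$, so the hypotheses hold. Since $w(J_\blt)=w(x)+\min\{w(x),w(y)\}$, one gets $\rho(\a;J_\blt)=1/2$, $\lct(J_\blt)=1$ and $\lct(\a^k;J_\blt)=1+k/2$. The lower bound is $\lct(J_\blt)+k\rho$; the upper bound comes from the order valuation at the origin. Hence $M=1$ and (2) holds. Yet any $v$ centered at $\xi$ has $v(y)=0$, so $v(J_\blt)=v(x)=v(\a)$, and (1) fails.

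Your localization fallback does not repair this. Over $\O_{X,\xi}$, both $\lct(\a^k;J_\blt)$ and $\rho(\a;J_\blt)$ are replaced by the (possibly larger) local invariants at $\xi$, so you would be verifying a different condition (2).

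So the argument, yours and the paper's, is correct exactly when $\xi$ is closed. That is the only case used in \Cref{thm-infinite.interp}, where $\xi$ is the closed point.

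A smaller point on the limit's center. Your phrase ``specialization-closed'' points the wrong way: under pointwise limits the center can only generize. What pins the limit's center to $\xi$ is that each $v_k$ vanishes on $R\setminus\m_\xi$, together with $v(\m_\xi)\ge q/p>0$.
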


\begin{proof}
    First, observe $\lct(J_{\blt})<\infty$ by 
    \[\lct(J_{\blt})=\inf_{v\in\Val_X^*}\frac{A(v)}{v(J_{\blt})}\le\frac{A(\ord_{\xi})}{\ord_{\xi}(J_{\blt})}<\infty.\]
    It follows that $\lct(\a^k; J_{\blt})<\infty$ for all $k\in\bbN$ and $\rho(\a;J_{\blt})<\infty$.
    
    (1) $\Rightarrow$ (2): This follows from \Cref{lem-exist.Afinite.implies.sup.finite} immediately.
    
    (2) $\Rightarrow$ (1): We now assume
    \[M=\sup_{k\in\bbN}\Big(\lct(\a^k; J_{\blt})-\rho(\a;J_{\blt})\cdot k\Big)<\infty.\]
    For every $k\in\bbN$, since $J_q\supseteq \m_{\xi}^p$ and $\a\subseteq\m_{\xi}$, there is a valuation $v_k\in\Val^{<\infty}_{X,\xi}$ which computes $\lct(\a^k;J_{\blt})$ and satisfies $v_k(J_{\blt})=1$ (see \Cref{thm-compute}).
    Then we have
    \begin{equation}\label{eq-vk.compute}
        \lct(\a^k;J_{\blt})=\frac{A(v_k)+v_k(\a^k)}{v_k(J_{\blt})}= A(v_k)+k v_k(\a).
    \end{equation}
    Since $v_k(\a)\ge \rho(\a;J_{\blt})v_k(J_{\blt})=\rho(\a;J_{\blt})$ due to \Cref{prop-sigma.equal.inf}, it follows that
    \[\sup_{k\in\bbN}A(v_k)\le \sup_{k\in\bbN}\Big(\lct(\a^k; J_{\blt})-\rho(\a;J_{\blt})\cdot k\Big)= M<\infty.\]
    Meanwhile, we have
    \[\m_{\xi}^p\subseteq J_q \implies v_k(\m_{\xi})\ge p^{-1}v_k(J_q)\ge \frac{q}{p}v_k(J_{\blt})=\frac{q}{p}>0.\]
    Therefore, as the set
    \[\big\{w\in\Val_X\mid c_X(w)=\xi, \ w(\m_{\xi})\ge q/p, \ A(w)\le M\big\}\]
    is compact in $\Val_X$ (see \cite[Proposition 5.9]{JM12}), we can extract a subsequence of $(v_k)_{k\in\bbN}$ which is convergent to some $v\in\Val_{X,\xi}^{<\infty}$, with $A(v)\le M$ and $v(\m_{\xi})\ge q/p$. According to \cite[Corollary 6.4]{JM12}, we have $v(J_{\blt})=1$. We also obtain from (\ref{eq-vk.compute}) that
    \[v(\a)=\lim_{k\to\infty}v_k(\a)=\lim_{k\to\infty}\frac{\lct(\a^k;J_{\blt})}{k}=\rho(\a;J_{\blt}).\]
    Hence, the valuation $v$ is just what we need.
\end{proof}

\begin{example}\label{ex-rho.-1.neq.sigma}
Let $R=\bbC[x,y]$ and $\m=(x,y)$. Let $v$ be a normalized valuation on $R$ centered at $o$ which is \emph{infinite singular} with finite \emph{skewness} $\alpha(v)<\infty$ and infinite log discrepancy $A(v)=\infty$ (see \cite[Remark A.4]{FJ04} for the existence of such valuations). Set $J_{\blt}=\a_{\blt}^v$ to be the graded sequence associated to the valuation $v$. One can verify $\lct(\a_{\blt}^v)<\infty$ and $\rho(\m;\a_{\blt}^v)=1$ via the valuative tree theory.

Then $v$ itself is the (unique) valuation satisfying $v(\m)=1=\rho(\m;\a_{\blt}^v)$ and $v(\a_{\blt}^v)=1$, while $v\notin\Val_{X}^{<\infty}$. Indeed, we have
\[\sup_{k\in\bbN}\big(\lct(\m^k;\a_{\blt}^v)-k\big)=A(v)=\infty\]
for this case according to \cite[Proposition A.2]{BGZ25}.
\end{example}

\Cref{prop-val.interp.gr.seq} can be generalized to countably infinite ideals case. We start from the following observation.

\begin{lemma}\label{lem-multiply.change.to.plus}
    Let $J_{\blt}$ be a  filtration of nonzero ideals in $R$ with $\lct(J_{\blt})<\infty$, and let $\a_1,\ldots,\a_r$ be nonzero ideals in $R$. Set $\a=\a_1\cdots\a_r$. Suppose there is a valuation $v\in\Val_X^{<\infty}$ such that $v(\a)=\rho(\a;J_{\blt})$ and $v(J_{\blt})=1$. Then $v(\a_j)=\rho(\a_j;J_{\blt})$ for $j=1,\ldots,r$ if and only if $\rho(\a;J_{\blt})=\sum_{j=1}^r \rho(\a_j;J_{\blt})$.
\end{lemma}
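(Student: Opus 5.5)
The plan is to compare $v(\a_j)$ with $\rho(\a_j;J_{\blt})$ using the valuative characterization of $\rho$ in \Cref{prop-sigma.equal.inf}. Since $v\in\Val_X^{<\infty}$ and $v(J_{\blt})=1$, that proposition gives
\[
v(\a_j)=\frac{v(\a_j)}{v(J_{\blt})}\ge \rho(\a_j;J_{\blt}),\qquad j=1,\ldots,r.
\]
Summing over $j$ and using multiplicativity of $v$ on ideals (\Cref{subsec-val}), we get
\[
\rho(\a;J_{\blt})=v(\a)=\sum_{j=1}^r v(\a_j)\ge \sum_{j=1}^r\rho(\a_j;J_{\blt}).
\]
The reverse inequality $\rho(\a;J_{\blt})\ge\sum_j\rho(\a_j;J_{\blt})$ always holds. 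This follows from \Cref{cor-sigma.subadditivity} by induction on $r$, applying it to $\a_1\cdots\a_{r-1}$ and $\a_r$. So the equality $\rho(\a;J_{\blt})=\sum_j\rho(\a_j;J_{\blt})$ holds in fact automatically under the hypotheses.

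For the direction ``$\Leftarrow$'', suppose $\rho(\a;J_{\blt})=\sum_j\rho(\a_j;J_{\blt})$. Then
\[
\sum_{j=1}^r\big(v(\a_j)-\rho(\a_j;J_{\blt})\big)=v(\a)-\rho(\a;J_{\blt})=0.
\]
Each summand is nonnegative by the first step, so each must vanish, i.e.\ $v(\a_j)=\rho(\a_j;J_{\blt})$ for all $j$. Finiteness is not an issue: $\rho(\a_j;J_{\blt})\le \lct(\a_j;J_{\blt})<\infty$, and $v(\a_j)\le v(\a)=\rho(\a;J_{\blt})<\infty$.

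For the direction ``$\Rightarrow$'', suppose $v(\a_j)=\rho(\a_j;J_{\blt})$ for all $j$. Then
\[
\rho(\a;J_{\blt})=v(\a)=\sum_j v(\a_j)=\sum_j\rho(\a_j;J_{\blt}),
\]
which is the desired equality.

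There is no real obstacle. The only points needing care are the convention $*/0=\infty$ (harmless here since $v(J_{\blt})=1\neq0$) and the induction applying \Cref{cor-sigma.subadditivity} to products of several ideals.
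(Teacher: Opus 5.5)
Your two directions are correct and coincide with the paper's proof. For ``$\Leftarrow$'' you combine $v(\a_j)\ge\rho(\a_j;J_{\blt})$ (from \Cref{prop-sigma.equal.inf}, using $v\in\Val_X^{<\infty}$ and $v(J_{\blt})=1$) with $\sum_j v(\a_j)=v(\a)=\rho(\a;J_{\blt})=\sum_j\rho(\a_j;J_{\blt})$. For ``$\Rightarrow$'' you use additivity of $v$ on products.

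The intermediate paragraph asserting that $\rho(\a;J_{\blt})=\sum_j\rho(\a_j;J_{\blt})$ holds automatically is wrong, and you should delete it.

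\begin{itemize}
\item What \Cref{cor-sigma.subadditivity} yields by induction is $\rho(\a;J_{\blt})\ge\sum_j\rho(\a_j;J_{\blt})$. This is the \emph{same} inequality you had just obtained from $v$, not its reverse. Nothing bounds $\rho(\a;J_{\blt})$ above by $\sum_j\rho(\a_j;J_{\blt})$.
\item The claim is in fact false. Take the example following the proof of \Cref{thm-finite.interp}: $R=\bbC[x,y]$, $\a_1=\m^6+(x)$, $\a_2=\m^6+(y)$, $\a_3=\m^4+(xy)$, $(b_1,b_2,b_3)=(1,1,3)$, and $J_{\blt}=I_{\blt}$. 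By \Cref{lem-finite.inter.case.rho.-1.equal.sigma}, $\rho(\a_j;I_{\blt})=\vb(\a_j;I_{\blt})=b_j$ and $\rho(\a;I_{\blt})=\vb(\a;I_{\blt})=6$.
\item The valuation $v=\tfrac32\ord_{\m}$ satisfies $A(v)=3$, $v(I_{\blt})=\min\{3/2,3/2,1\}=1$ and $v(\a)=6$. So every hypothesis of the lemma holds, yet $6\neq 5$ and $v(\a_1)=3/2\neq 1$.
\end{itemize}

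Combined with your ``$\Leftarrow$'' argument, the automatic-equality claim would make the conclusion $v(\a_j)=\rho(\a_j;J_{\blt})$ unconditional, which this example rules out. Neither direction uses that paragraph, so removing it leaves a correct proof identical to the paper's.
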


\begin{proof}
    ``If" part: Suppose $\rho(\a;J_{\blt})=\sum_{j=1}^r \rho(\a_j;J_{\blt})$. Then 
    \[\sum_{j=1}^r v(\a_j)=v(\a)=\rho(\a;J_{\blt})=\sum_{j=1}^r \rho(\a_j;J_{\blt}),\]
    while each $v(\a_j)\ge v(J_{\blt})\rho(\a_j;J_{\blt})=\rho(\a_j;J_{\blt})$ due to \Cref{prop-sigma.equal.inf}. Hence, $v(\a_j)=\rho(\a_j;J_{\blt})$ for $j=1,\ldots,r$.
    
    ``Only if" part: This is a direct consequence of $v(\a)=\sum_{j=1}^r v(\a_j)$.
\end{proof}

Now we prove the following generalization of \Cref{prop-val.interp.gr.seq}, where the finite case will be used in the proof of \Cref{thm-infinite.interp}.

\begin{proposition}\label{prop-gr.seq.infinite.compute}
    Let $J_{\blt}$ be a  filtration of ideals in $R$ such that $\ord_{\xi}(J_{\blt})>0$ and $\m_{\xi}^p\subseteq J_{q}$ for some $p,q\in\bbZ_+$. Assume $(\a_j)_{j\in\bbZ_+}$ is a sequence of nonzero ideals of $R$ with $\a_j\subseteq\m_{\xi}$ for all $j\ge 1$. Then the following two statements are equivalent:
    \begin{enumerate}
        \item There exists $v\in\Val_{X,\xi}^{<\infty}$ such that $v(J_{\blt})=1$ and $v(\a_j)=\rho(\a_j;J_{\blt})$ for all $j\ge 1$;
        \item For every $r\in\bbZ_+$,
        \[\rho(\a_1\cdots\a_r;J_{\blt})=\sum_{j=1}^r\rho(\a_j;J_{\blt}),\]
        and it holds that
        \[\sup_{r\ge 1}\sup_{k\in\bbN}\Big(\lct(\a_1^k\cdots\a_r^k; J_{\blt})-\rho(\a_1\cdots\a_r;J_{\blt})\cdot k\Big)<\infty.\]
    \end{enumerate}
\end{proposition}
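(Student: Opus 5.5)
The plan is to reduce to \Cref{prop-val.interp.gr.seq} applied to the finite products $\a^{(r)}\coloneqq\a_1\cdots\a_r$, then pass to a limit in $r$ using the compactness used in that proof.

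\emph{Direction (1) $\Rightarrow$ (2).} Let $v$ be as in (1). For each $r$, additivity gives $v(\a^{(r)})=\sum_{j\le r}v(\a_j)=\sum_{j\le r}\rho(\a_j;J_{\blt})$. By \Cref{prop-sigma.equal.inf} we have $v(\a^{(r)})\ge \rho(\a^{(r)};J_{\blt})$, since $v(J_{\blt})=1$. By \Cref{cor-sigma.subadditivity}, $\rho(\a^{(r)};J_{\blt})\ge\sum_{j\le r}\rho(\a_j;J_{\blt})$. Together these force equality throughout, so the first condition holds and $v(\a^{(r)})=\rho(\a^{(r)};J_{\blt})$. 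The estimate in the proof of \Cref{lem-exist.Afinite.implies.sup.finite} then gives
\[\lct\big((\a^{(r)})^k;J_{\blt}\big)-k\rho(\a^{(r)};J_{\blt})\le A(v)\]
for all $r$ and $k$. This bound is uniform in $r$, which is the second condition.

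\emph{Direction (2) $\Rightarrow$ (1).} Let $M$ be the double supremum in (2). For each fixed $r$, apply \Cref{prop-val.interp.gr.seq} to $\a^{(r)}\subseteq\m_\xi$; its hypotheses on $J_{\blt}$ are exactly ours. This produces $w_r\in\Val^{<\infty}_{X,\xi}$ with $w_r(J_{\blt})=1$, $w_r(\a^{(r)})=\rho(\a^{(r)};J_{\blt})$, and, by the ``moreover'' clause, $A(w_r)\le M$ and $w_r(\m_\xi)\ge q/p$. Since $\rho(\a^{(r)};J_{\blt})=\sum_{j\le r}\rho(\a_j;J_{\blt})$, \Cref{lem-multiply.change.to.plus} yields $w_r(\a_j)=\rho(\a_j;J_{\blt})$ for every $j\le r$.

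All the $w_r$ lie in the compact set
\[\{w\in\Val_X\mid c_X(w)=\xi,\ w(\m_\xi)\ge q/p,\ A(w)\le M\}\]
from \cite[Proposition 5.9]{JM12}. Extract a convergent subsequence $w_{r_i}\to v$. Then $v$ lies in the same set, so $c_X(v)=\xi$ and $A(v)\le M<\infty$.

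For each fixed $j$, we have $w_{r_i}(\a_j)=\rho(\a_j;J_{\blt})$ once $r_i\ge j$. Convergence in $\Val_X$ is pointwise convergence on ideals, so $v(\a_j)=\rho(\a_j;J_{\blt})$. Finally, $v(J_{\blt})=\lim w_{r_i}(J_{\blt})=1$ by the continuity of $w\mapsto w(J_{\blt})$ on sets of bounded log discrepancy \cite[Corollary 6.4]{JM12}, exactly as in the proof of \Cref{prop-val.interp.gr.seq}.

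\emph{Main obstacle.} The delicate step is this limit. Pointwise convergence $v(\a_j)=\lim w_{r_i}(\a_j)$ is immediate from the topology. But passing $w(J_{\blt})=1$ to the limit needs the log discrepancy bound, which is exactly why the supremum in (2) must be uniform in $r$. One also has to confirm that the compact set in \cite[Proposition 5.9]{JM12} keeps the center equal to $\xi$ rather than a specialization of $\xi$. This is handled as in the proof of \Cref{prop-val.interp.gr.seq}, using $w(\m_\xi)\ge q/p>0$ together with the center condition built into that set.
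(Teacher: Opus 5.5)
Your proposal is correct and matches the paper's proof essentially step for step. For (1)$\Rightarrow$(2) you combine \Cref{prop-sigma.equal.inf}, \Cref{cor-sigma.subadditivity} and the bound $\lct\le A(v)+k\rho$, uniform in $r$. For (2)$\Rightarrow$(1) you apply \Cref{prop-val.interp.gr.seq} to each product $\a_1\cdots\a_r$, use \Cref{lem-multiply.change.to.plus} to get $w_r(\a_j)=\rho(\a_j;J_{\blt})$ for $j\le r$, and pass to a limit in the compact set of \cite[Proposition 5.9]{JM12}, with \cite[Corollary 6.4]{JM12} giving $v(J_{\blt})=1$.
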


\begin{proof}
    First, as in the proof of \Cref{prop-val.interp.gr.seq}, we can see $\lct(J_{\blt})<\infty$ and $\rho(\a;J_{\blt})<\infty$ for all nonzero ideals $\a$ of $R$. 
    
    (1) $\Rightarrow$ (2): By the assumption and \Cref{prop-sigma.equal.inf}, for every $r\in\bbZ_+$,
    \begin{equation*}
        \begin{aligned}
            \sum_{j=1}^r \rho(\a_j;J_{\blt})=\sum_{j=1}^r v(\a_j)=v(\a_1\cdots\a_r)&\ge \rho(\a_1\cdots\a_r;J_{\blt})\cdot v(J_{\blt})\\
            &=\rho(\a_1\cdots\a_r;J_{\blt}),
        \end{aligned}
    \end{equation*}
    so we have $\rho(\a_1\cdots\a_r;J_{\blt})=\sum_{j=1}^r\rho(\a_j;J_{\blt})$ by \Cref{cor-sigma.subadditivity}, which also yields that $v(\a_1\cdots\a_r)=\rho(\a_1\cdots\a_r;J_{\blt})$ for any $r\ge 1$.
    
    Now, for every $k\in\bbN$ and $r\ge 1$,
    \[\lct(\a_1^k\cdots\a_r^k; J_{\blt})\le\frac{A(v)+v(\a_1^k\cdots\a_r^k)}{v(J_{\blt})}=A(v)+\rho(\a_1\cdots\a_r;J_{\blt})\cdot k,\]
    which implies \[\sup_{r\ge 1}\sup_{k\in\bbN}\Big(\lct(\a_1^k\cdots\a_r^k; J_{\blt})-\rho(\a_1\cdots\a_r;J_{\blt})\cdot k\Big)\le A(v)<+\infty.\]
    
    (2) $\Rightarrow$ (1): We now assume $\rho(\a_1\cdots\a_r;J_{\blt})=\sum_{j=1}^r\rho(\a_j;J_{\blt})$ for all $r\ge 1$, and
    \[M\coloneqq \sup_{r\ge 1}\sup_{k\in\bbN}\Big(\lct(\a_1^k\cdots\a_r^k; J_{\blt})-\rho(\a_1\cdots\a_r;J_{\blt})\cdot k\Big)<+\infty.\]
    According to \Cref{prop-val.interp.gr.seq}, for any $r\ge 1$, we can find a valuation $v_r\in \Val_{X,\xi}^{<\infty}$ such that $v_r(J_{\blt})=1$ and $v_r(\a_1\cdots\a_r)=\rho(\a_1\cdots\a_r;J_{\blt})$. Note that since $\rho(\a_1\cdots\a_r;J_{\blt})=\sum_{j=1}^{r}\rho(\a_j;J_{\blt})$ for all $r$, it follows from \Cref{lem-multiply.change.to.plus} that $v_r(\a_j)=\rho(\a_j;J_{\blt})$ for all $1\le j\le r$. Moreover, we can assume $v_r(\m_{\xi})\ge q/p$ and $A(v_r)\le M$ for all $r$ by \Cref{prop-val.interp.gr.seq}. Hence, there exists a subsequence of $(v_r)_{r\in\bbZ_+}$ which converges to some $v\in\Val_{X,\xi}^{<\infty}$ (which also satisfies $v(\m_{\xi})\ge q/p$ and $A(v)\le M$). We can see that $v(\a_j)=\rho(\a_j;J_{\blt})$ for all $j\in\bbZ_+$, and by \cite[Corollary 6.4]{JM12} we have $v(J_{\blt})=1$ since every $v_r(J_{\blt})=1$.
    
    The proof is complete.
\end{proof}

\subsection{Proof of \Cref{thm-infinite.interp}}

\begin{proof}[Proof of \Cref{thm-infinite.interp}]
    First, we prove (1) $\Rightarrow$ (2). Assume $v$ is a valuation on $R$ with $A(v)<\infty$ such that $v(\a_j)=b_j$ for all $j\in\bbZ_+$. Then $\vb(\a_1\cdots\a_r;I_{\blt}^{(r)})=\sum_{j=1}^r b_j$ for all $j\in\bbZ_+$ by \Cref{prop-v(aj)=bj.implies.mu=bj-1}. In addition, since $v(I_{\blt}^{(r)})=\min_{1\le j\le r}\{v(\a_j)/b_j\}=1$, for every $r\ge 1$ and $k\in\bbN$, we have
    \[\lct(\a_1^k\cdots\a_r^k;I_{\blt}^{(r)})\le \frac{A(v)+v(\a_1^k\cdots\a_r^k)}{v(I_{\blt}^{(r)})}=A(v)+k\sum_{j=1}^r b_j,\]
    which confirms $\sup_{r}\sup_{k}\Big(\lct\big(\a_1^k\cdots\a_r^k; I^{(r)}_{\blt}\big)-k\sum_{j=1}^r b_j\Big)\le A(v)<\infty$.

    Next, we prove (2) $\Rightarrow$ (1). By assumption, there exists some $r_0\in\bbZ_+$ such that $\sqrt{\sum_{j=1}^{r_0} \a_{j}}=\m$, which implies that there exists $s\in\bbZ_+$ such that $\m^{s}\subseteq \sum_{j=1}^{r_0}\a_j\subseteq \m$ (cf. \cite[Corollary 7.16]{AM69}). Hence, by taking $p= s\cdot \sum_{j=1}^{r_0} \lceil 1/b_j \rceil$, we have
    \[I_{1}^{(r)}\supseteq \m^{p}, \quad \forall\, r\ge r_0.\]
    Moreover, $\ord_{\m}(I^{(r)}_{\blt})=\min_{1\le j\le r}\{\ord_{\m}(\a_j)/b_j\}>0$ for all $r$.
    
    Let $\fk{c}_r\coloneqq \a_1\cdots\a_r$ for $r\ge r_0$. Due to \Cref{lem-finite.inter.case.rho.-1.equal.sigma}, we have $\rho(\fk{c}_r;I^{(r)}_{\blt})=\vb(\fk{c}_r;I^{(r)}_{\blt})=\sum_{j=1}^r b_j$ for all $r\ge 1$. Now by \Cref{prop-val.interp.gr.seq}, for every $r\ge r_0$, there exists a valuation $v_r\in\Val_{X}$ on $R$ such that $v_r(I_{\blt}^{(r)})=1$ and $v_r(\fk{c}_r)=\rho(\fk{c}_r;I^{(r)}_{\blt})=\sum_{j=1}^r b_j$. Moreover, the valuation $v_r$ can satisfy $v_r(\m)\ge 1/p$ and
    \[A(v_r)\le \sup_{k\ge 1}\Big(\lct(\fk{c}_r;I^{(r)}_{\blt})-k\sum_{j=1}^r b_j\Big)\le M,\]
    where $M\coloneqq \sup_{r\ge 1}\sup_{k\in\bbN}\Big(\lct\big(\a_1^k\cdots\a_r^k; I^{(r)}_{\blt}\big)-k\sum_{j=1}^r b_j\Big)<\infty$. We verify $v_r(\a_j)=b_j$ for all $1\le j\le r$. In fact, we have $\sum_{j=1}^r v_r(\a_j)=v_r(\fk{c}_r)=\sum_{j=1}^r b_j$, and $v_r(\a_j)\ge \vb(\a_j;I_{\blt}^{(r)})\cdot  v_r(I_{\blt}^{(r)})\ge b_j$ for $1\le j\le r$ by \Cref{lem-mu.j.le.bj.-1}. It follows that $v_r(\a_j)=b_j$ for all $1\le j\le r$. Meanwhile, since every $v_r$ lies in the following compact subset of $\Val_X$,
    \[\{w\in\Val_X \mid A(w)\le M, \ w(\m)\ge 1/p\},\]
    we can extract a subsequence of $(v_r)_{r\ge r_0}$ which converges to some valuation $v$ on $R$ with $A(v)\le M<\infty$. Since every $v_r$ satisfies $v_r(\a_j)=b_j$ for $1\le j\le r$, we can see $v(\a_j)=b_j$ for all $j\in\bbZ_+$. Therefore, the valuation $v$ is just what we need.
    \end{proof}

\section{Complex analytic case}\label{sec-c.a.c}

In this section, $R=\O_{\bbC^n,o}$ is the ring of germs of holomorphic functions at the origin $o$ in $\bbC^n$.

For every nonzero ideal $\a$ of $R$, we can associate to $\a$ a plurisubharmonic (psh for short) function near $o$ as follows: if $g_1,\ldots,g_s$ is a set of generators of $\a$, then we set $\log|\a|\coloneqq \log\big(|g_1|^2+\cdots+|g_s|^2\big)^{1/2}$. Note that this definition depends on the choice of generators for $\a$, but any two such choices yield psh functions that differ by a bounded term near $o$. For ideals $\a_1,\a_2$ of $R$, near $o$, we have
\begin{equation*}
    \begin{split}
        \log|\a_1\a_2|&=\log|\a_1|+\log|\a_2|+O(1),\\ \quad \log|\a_1+\a_2|&=\max\{\log|\a_1|,\log|\a_2|\}+O(1).
    \end{split}
\end{equation*}
Notably, for ideals $\a$ and $I$ of $R$, we also have $\log|\a|\le \log|I|+O(1)$ if and only if $\a\subseteq \ol{I}$, where $\ol{I}$ is the integral closure of $I$ in $R$; see e.g. \cite[Corollary 10.5]{CADG}.

Now suppose $\a_1,\ldots,\a_r$ are proper ideals of $R$, and $b_1,\ldots,b_r\in\bbR_{+}$. Define the psh function
\[\Phi=\log\Big(|\a_1|^{1/b_1}+\cdots+|\a_r|^{1/b_r}\Big)=\max_{1\le j\le r}\log|\a_j|^{1/b_j}+O(1)\]
near $o$, and for every psh function $\psi$ near $o$ recall that the \emph{relative type} \cite{Ras06} of $\psi$ to $\Phi$ at $o$ is defined by
\[\sigma(\psi,\Phi)\coloneqq \sup\,\{c\ge 0 \mid \psi\le c\Phi+O(1) \ \text{near} \ o\}.\]
Let $I_{\blt}\coloneqq \Big(\frac{1}{b_1}\cdot \a_{1}\Big)_{\blt}+\cdots+\Big(\frac{1}{b_r}\cdot \a_{r}\Big)_{\blt}$.

\begin{lemma}\label{lem-log.Im.le.mvarphi}
    For every $m\in\bbZ_+$, we have
    \[\big(m+\max_{1\le j\le r} b_j\big)\Phi+O(1)\le \log|I_m|\le m\Phi+O(1) \quad \text{near} \ o.\]
\end{lemma}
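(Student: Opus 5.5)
The plan is to compute $\log|I_m|$ directly from the definition of $I_m$ as a finite sum of products, using the two rules $\log|\a_1\a_2|=\log|\a_1|+\log|\a_2|+O(1)$ and $\log|\a_1+\a_2|=\max\{\log|\a_1|,\log|\a_2|\}+O(1)$ recalled above. Since all $b_j>0$, we have
\[I_m=\sum_{m_1+\cdots+m_r=m}\a_1^{\lceil m_1/b_1\rceil}\cdots\a_r^{\lceil m_r/b_r\rceil},\]
a finite sum for fixed $m$. It follows that, near $o$,
\[\log|I_m|=\max_{m_1+\cdots+m_r=m}\sum_{j=1}^r \Big\lceil \frac{m_j}{b_j}\Big\rceil\log|\a_j|+O(1),\]
where the $O(1)$ may depend on $m$; this is harmless because the statement is for fixed $m$. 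Set $\phi_j=\log|\a_j|$. Since the $\a_j$ are proper ideals, each $\phi_j$ is bounded above near $o$; after shrinking the neighbourhood and subtracting constants, we may assume $\phi_j\le 0$. This only changes $\Phi$ by $O(1)$. We have $\Phi=\max_j\phi_j/b_j+O(1)$.

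For the upper bound, fix a tuple $(m_j)$. Because $\phi_j\le 0$ and $\lceil m_j/b_j\rceil\ge m_j/b_j$, we get
\[\sum_j\lceil m_j/b_j\rceil\phi_j\le\sum_j m_j\,(\phi_j/b_j)\le \Big(\sum_j m_j\Big)\max_j(\phi_j/b_j)=m\Phi+O(1).\]
Taking the maximum over the finitely many tuples gives $\log|I_m|\le m\Phi+O(1)$.

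For the lower bound, we compare with the single-index terms $m_j=m$, $m_i=0$ for $i\ne j$. These show
\[\log|I_m|\ge \max_j \lceil m/b_j\rceil\phi_j+O(1).\]
Using $\phi_j\le 0$ and $\lceil m/b_j\rceil\le m/b_j+1=(m+b_j)/b_j\le (m+\max_i b_i)/b_j$, we get
\[\lceil m/b_j\rceil\phi_j\ge (m+\max_i b_i)\,\phi_j/b_j.\]
Maximizing over $j$ yields $\log|I_m|\ge (m+\max_i b_i)\Phi+O(1)$.

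The only delicate point is the sign normalization $\phi_j\le 0$. It is needed to move the ceilings in the right direction, and it is justified because each proper ideal $\a_j\subseteq\m$ has generators vanishing at $o$. The rest is bookkeeping of the finitely many $O(1)$ terms for fixed $m$.
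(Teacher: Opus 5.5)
Your proposal is correct and follows the paper's proof essentially verbatim. You expand $\log|I_m|$ as a maximum over the finitely many tuples $(m_1,\ldots,m_r)$, drop the ceilings to get the upper bound, and compare with the single-index terms $\a_j^{\lceil m/b_j\rceil}$ (using $\lceil m/b_j\rceil\le m/b_j+1$) for the lower bound; your explicit normalization $\phi_j\le 0$ is exactly what the paper uses tacitly when it moves the ceilings.
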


\begin{proof}
Near $o$, we have
\begin{equation*}
    \begin{aligned}
        \log|I_m|&=\log\bigg|\sum_{m_1+\cdots+m_r=m} \a_1^{\lceil m_1/b_1\rceil}\cdots \a_r^{\lceil m_r/b_r\rceil}\bigg|+O(1)\\
        &=\max_{m_1+\cdots+m_r=m}\bigg(\sum_{j=1}^r\lceil m_j/b_j\rceil \log|\a_j|\bigg)+O(1),
    \end{aligned}
\end{equation*}
which implies
    \begin{equation*}
        \begin{aligned}
            \log|I_m|\le \max_{m_1+\cdots+m_r=m}\bigg(&\sum_{j=1}^r m_j\cdot\log|\a_j|^{1/b_j}\bigg)+O(1)\\
            &= m \cdot \max_{1\le j\le r}\log|\a_j|^{1/b_j}+O(1)=m\Phi+O(1),
        \end{aligned}
    \end{equation*}
    and
    \begin{equation*}
        \begin{aligned}
            \log|I_m|&\ge \max_{1\le j\le r}\lceil m/b_j \rceil \log|\a_j|+O(1)\\
            &\ge \max_{1\le j\le r}\big( m/b_j+1\big) \log|\a_j|+O(1) \ge \big(m+\max_{1\le j\le r} b_j\big)\Phi+O(1).
        \end{aligned}
    \end{equation*}   
\end{proof}

We show that the relative type and jumping number of $\Phi$ coincide with some algebraic notions of $I_{\blt}$ in the following two lemmas respectively. 

\begin{lemma}\label{lem-analytic.case.rho-1.equal.sigma}
    $\vb(\a;I_{\blt})=\sigma(\log|\a|,\Phi)$ for every ideal $\a$ of $R=\O_{\bbC^n,o}$.
\end{lemma}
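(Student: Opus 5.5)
We prove the two inequalities separately, passing between algebra and analysis via the dictionary $\log|\a|\le \log|I|+O(1)\iff \a\subseteq\ol{I}$ together with \Cref{lem-log.Im.le.mvarphi}. Write $\sigma=\sigma(\log|\a|,\Phi)$, and note that $\log|\a^k|=k\log|\a|+O(1)$.

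\emph{Step 1: $\vb(\a;I_{\blt})\le\sigma$.} Suppose $\a^k\subseteq I_m$. Then $k\log|\a|\le\log|I_m|+O(1)\le m\Phi+O(1)$ by the right-hand inequality of \Cref{lem-log.Im.le.mvarphi}, so $\sigma\ge m/k$. Take $m=\nu_{I_{\blt}}(\a^k)$ (if this is infinite for some $k$, the same argument shows $\sigma=\infty$). Letting $k\to\infty$ gives $\sigma\ge\vb(\a;I_{\blt})$.

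\emph{Step 2: $\sigma\le\vb(\a;I_{\blt})$.} Fix $0<c<\sigma$ (the case $\sigma=0$ is trivial). Then $\log|\a|\le c\Phi+O(1)$. For $k\in\bbZ_+$ put $m_k=\lfloor ck\rfloor-\lceil\max_j b_j\rceil$. For $k$ large, $m_k\ge 1$ and $m_k+\max_j b_j\le ck$. Since $\Phi\le O(1)$ near $o$ (the ideals are proper), we have $ck\Phi\le (m_k+\max_jb_j)\Phi+O(1)$. Combining with the left-hand inequality of \Cref{lem-log.Im.le.mvarphi},
\[\log|\a^k|=k\log|\a|+O(1)\le ck\Phi+O(1)\le (m_k+\max_j b_j)\Phi+O(1)\le \log|I_{m_k}|+O(1).\]
Hence $\a^k\subseteq\ol{I_{m_k}}$, so $\nu_{\ol{I}_{\blt}}(\a^k)\ge m_k$. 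Dividing by $k$ and letting $k\to\infty$ gives $\vb(\a;\ol I_{\blt})\ge c$. By \Cref{prop-int.cl}, $\vb(\a;I_{\blt})=\vb(\a;\ol I_{\blt})\ge c$, and letting $c\uparrow\sigma$ finishes.

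The main point requiring care is Step 2. The comparison is only up to integral closure, so \Cref{prop-int.cl} is essential. One also has to absorb the additive shift $\max_j b_j$ using the sign of $\Phi$ near $o$; this costs only $O(1)$ in $m$, which disappears in the limit. Degenerate cases ($\a=0$, or $\sigma=\infty$) are handled by the same inclusions with arbitrarily large $c$.
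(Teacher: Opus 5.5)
Your proof is correct. Step 1 is the same as the paper's. Step 2 reaches the same conclusion, $\a^{k}\subseteq\ol{I_{m}}$ with $m/k\to c$, by a different intermediate route.

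The paper does not use the lower bound $\log|I_m|\ge(m+\max_j b_j)\Phi+O(1)$ from \Cref{lem-log.Im.le.mvarphi} here. Instead, for large $L$ it chooses integers $d_j$ with $d_j/L\le 1/b_j<d_j/L+1/L$. It then shows $\log|\a^{\lceil L/\sigma\rceil}|\le\log|J|+O(1)$ for the auxiliary ideal $J=\sum_j\a_j^{d_j}$. Finally it checks the purely algebraic inclusion $J\subseteq I_m$ for $m=\lfloor L-\max_j b_j\rfloor$, which gives $\a^{\lceil L/\sigma\rceil}\subseteq\ol{J}\subseteq\ol{I_m}$. This is the same rational-approximation device used in \Cref{lem-Skoda.multiplier.ideal} and \Cref{lem-finite.inter.case.rho.-1.equal.sigma}.

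Your version is shorter because the comparison between $\Phi$ and $\log|I_m|$ is already packaged in \Cref{lem-log.Im.le.mvarphi}. The extra shift by $\max_j b_j$ is then absorbed using $\Phi\le O(1)$, the same sign argument the lemma's own proof uses. The paper's version instead confines the analytic comparison to a single ideal generated by powers of the $\a_j$, so that the containment in $I_m$ becomes a transparent algebraic check.

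Both arguments depend in the same way on the dictionary $\log|\a|\le\log|I|+O(1)\iff\a\subseteq\ol{I}$ and on \Cref{prop-int.cl} to pass from $\ol{I}_{\blt}$ back to $I_{\blt}$.
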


\begin{proof}
    As $\vb(\a;I_{\blt})=\vb(\a;\ol{I}_{\blt})$ by \Cref{prop-int.cl}, we only need to show $\vb(\a;I_{\blt})\le \sigma(\log|\a|,\Phi)$ and $\vb(\a;\ol{I}_{\blt})\ge \sigma(\log|\a|,\Phi)$.
    
    First, we prove $\vb(\a;I_{\blt})\le \sigma(\log|\a|,\Phi)$. May assume $\vb(\a;I_{\blt})>0$. For every positive integer $l$, let $m_l=\max\,\{m\in\bbN\mid \a^{l}\subseteq I_m\}$. Then it follows from \Cref{lem-log.Im.le.mvarphi} that
\[l\cdot\log|\a|\le \log|I_{m_l}|+O(1)\le m_l\cdot\Phi+O(1).\]
    By the definition of relative type, we have
    \[\sigma(\log|\a|,\Phi)\ge \sup_{l\in\bbZ_+}\frac{m_l}{l}=\vb(\a;I_{\blt}).\]
    
    Next, we prove $\vb(\a;\ol{I}_{\blt})\ge \sigma(\log|\a|,\Phi)$. May assume $\sigma(\log|\a|,\Phi)>0$. Given any positive real number $\sigma<\sigma(\log|\a|,\Phi)$, we have $\log|\a|\le \sigma\Phi+O(1)$ near $o$. For $L\in\bbZ_+$ sufficiently large, clearly there exists positive integers $d_1,\ldots,d_r$ such that
    \[\frac{d_j}{L}\le \frac{1}{b_j}< \frac{d_j}{L}+\frac{1}{L}, \quad j=1,\ldots r.\]
    It follows that
    \begin{equation*}
        \begin{split}
            \lceil L/\sigma \rceil\cdot\log|\a|\le L\max_{1\le j\le r}\log|\a_j|^{1/b_j}+O(1)&\le L\cdot \max_{1\le j\le r}\log|\a_j|^{d_j/L}+O(1)\\
            &\le \log\Big(\sum_{1\le j\le r}\big|\a_j^{d_j}\big|\Big)+O(1).
        \end{split}
    \end{equation*}
    Let $J=\sum_{j=1}^r \a_j^{d_j}$ be an ideal of $\O_{\bbC^n,o}$. Then the above inequality says that $\log\big|\a^{\lceil L/\sigma \rceil}\big|\le \log |J|+O(1)$, which indicates that $\a^{\lceil L/\sigma \rceil}\subseteq \ol{J}$. Now we verify that $J\subseteq I_m$ for $m= \lfloor L- \max_{1\le j\le r} b_j\rfloor$. In fact, for such $m$, we have \[m/b_j \le \Big(\frac{1}{b_j}-\frac{1}{L}\Big)L\le d_j \implies \lceil m/b_j\rceil \le d_j, \quad j=1,\dots, r,\]
    which implies $J=\sum\a_j^{d_j}\subseteq I_m$. Thus, we have $\a^{\lceil L/\sigma \rceil}\subseteq\ol{J}\subseteq \ol{I_m}$ for $m= \lfloor L- \max_{1\le j\le r} b_j\rfloor$. It follows that 
    \[\vb(\a;\ol{I}_{\blt})\ge \sup_{L}\frac{ \lfloor L- \max_{1\le j\le r} b_j\rfloor}{\lceil L/\sigma \rceil}=\sigma.\]
    Taking $\sigma \to \sigma(\log|\a|,\Phi)$, we get $\vb(\a;\ol{I}_{\blt})\ge \sigma(\log|\a|,\Phi)$.
    
    The proof is complete.
\end{proof}

For a psh function $\phi$ near $o$, the \emph{multiplier ideal} of $\phi$ at $o$ is defined by:
\[\I(\phi)_o\coloneqq \big\{ f\in \O_{\bbC^n,o} \mid |f|^2e^{-2\phi} \text{ is integrable near } o\},\]
and the \emph{jumping number} $c_o^{\q}(\phi)\coloneqq \sup\,\{c\ge 0 \mid \q\subseteq \I(c\phi)_o\}$ for every ideal $\q$ of $R=\O_{\bbC^n,o}$. In particular, if $\phi=t\log|\a|$ for some nonzero ideal $\a\subseteq R$ and $t>0$, then $\I(t\log|\a|)_o=\J(t\cdot \a)$ (see \cite{DK01}).

\begin{lemma}\label{lem-monomial.case}
    $\lct^{\q}(I_{\blt})=c_o^{\q}(\Phi)$ for every nonzero ideal $\q$ of $R=\O_{\bbC^n,o}$.
\end{lemma}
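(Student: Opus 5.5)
The plan is to prove the two inequalities $\lct^{\q}(I_{\blt})\le c_o^{\q}(\Phi)$ and $\lct^{\q}(I_{\blt})\ge c_o^{\q}(\Phi)$ by comparing asymptotic multiplier ideals with analytic multiplier ideals. Two inputs from earlier in the section will do most of the work. The first is \Cref{lem-log.Im.le.mvarphi}, which sandwiches $\log|I_m|$ between $(m+\max_j b_j)\Phi$ and $m\Phi$ up to bounded terms. The second is the identity $\I(t\log|\a|)_o=\J(t\cdot\a)$ from \cite{DK01}. The point is that, for each fixed $p$, the asymptotic multiplier ideal $\J(\lambda\cdot I_{\blt})$ is the maximal element among the ideals $\J(\frac{\lambda}{p}\cdot I_p)=\I(\frac{\lambda}{p}\log|I_p|)_o$.

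\textbf{Step 1: $\J(\lambda\cdot I_{\blt})\subseteq \I(\lambda\Phi)_o$.} Fix $\lambda\ge 0$ and choose $p$ large so that $\J(\lambda\cdot I_{\blt})=\J(\frac{\lambda}{p}\cdot I_p)$. By \Cref{lem-log.Im.le.mvarphi},
\[\frac{\lambda}{p}\log|I_p|\ge \frac{\lambda(p+\max_j b_j)}{p}\Phi+O(1)\ge \lambda'\Phi+O(1)\]
near $o$, for any $\lambda'\ge \lambda(1+\max_j b_j/p)$; here we use $\Phi\le 0$ near $o$. Since multiplier ideals are monotone in the weight, this gives $\J(\lambda\cdot I_{\blt})\subseteq \I(\lambda_p\Phi)_o$ with $\lambda_p=\lambda(1+\max_j b_j/p)$. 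Letting $p\to\infty$ and using the openness property (strong openness, or simply the fact that $\I(c\Phi)_o$ is constant for $c\in[\lambda,\lambda+\epsilon)$, a consequence of Noetherianity and semicontinuity), we obtain $\J(\lambda\cdot I_{\blt})\subseteq\I(\lambda\Phi)_o$. Now if $\q\subseteq\J(\lambda\cdot I_{\blt})$ then $\q\subseteq \I(\lambda\Phi)_o$. Hence $c_o^{\q}(\Phi)\ge \lct^{\q}(I_{\blt})$; at the endpoint we use that $c_o^{\q}$ is a supremum while $\lct^{\q}$ is the threshold at which the inclusion first fails.

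\textbf{Step 2: the reverse inclusion.} We have $\frac{\lambda}{p}\log|I_p|\le\lambda\Phi+O(1)$, so $\I(\lambda\Phi)_o\subseteq\J(\frac{\lambda}{p}\cdot I_p)\subseteq\J(\lambda\cdot I_{\blt})$ for every $p$. Thus $\q\subseteq\I(\lambda\Phi)_o$ implies $\q\subseteq\J(\lambda\cdot I_{\blt})$, which gives $\lct^{\q}(I_{\blt})\ge c$ for every $c<c_o^{\q}(\Phi)$. Combined with Step 1, this yields equality.

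The main obstacle is the endpoint and limit bookkeeping in Step 1, namely passing from the weights $\lambda(1+\max_j b_j/p)\Phi$ to $\lambda\Phi$. I would sidestep it by working only with strict inequalities between thresholds. For $c<\lct^{\q}(I_{\blt})$ we have $\q\subseteq\J(c\cdot I_{\blt})\subseteq\I(c_p\Phi)_o$ with $c_p\downarrow c$, so $c_o^{\q}(\Phi)\ge c_p\ge c$. Letting $c\uparrow\lct^{\q}(I_{\blt})$ gives one inequality, and Step 2 gives the other. This avoids invoking openness at all; only monotonicity and the identification $\I(t\log|\a|)_o=\J(t\cdot\a)$ are needed.
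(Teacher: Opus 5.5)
\textbf{There is a genuine gap: in both steps you apply monotonicity of multiplier ideals backwards.} If $\phi\le\psi+O(1)$ near $o$, then $e^{-2\phi}\ge Ce^{-2\psi}$, so $\I(\phi)_o\subseteq\I(\psi)_o$. The more singular weight has the smaller ideal. Two consequences follow.

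The lower bound $\frac{\lambda}{p}\log|I_p|\ge\lambda_p\Phi+O(1)$ from \Cref{lem-log.Im.le.mvarphi} gives $\I(\lambda_p\Phi)_o\subseteq\J(\frac{\lambda}{p}\cdot I_p)$. It does not give $\J(\frac{\lambda}{p}\cdot I_p)\subseteq\I(\lambda_p\Phi)_o$.

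The upper bound $\frac{\lambda}{p}\log|I_p|\le\lambda\Phi+O(1)$ gives $\J(\frac{\lambda}{p}\cdot I_p)\subseteq\I(\lambda\Phi)_o$, not the reverse.

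The inclusion asserted in Step 1 can actually fail for an admissible $p$. Take $n=1$, $r=1$, $\a_1=\m$, $b_1=1$. Then $\J(\frac{\lambda}{p}\cdot I_p)=\J(\lambda\cdot\m)$ for every $p$, so $p=1$ realizes the maximum. For $\lambda=0.9$, your Step 1 would give $R=\J(0.9\cdot I_{\blt})\subseteq\I(1.8\log|z|)_o=(z)$, which is false. Consequently the ``sidestep'' in your last paragraph establishes nothing, because it rests on $\J(c\cdot I_{\blt})\subseteq\I(c_p\Phi)_o$. Meanwhile Step 2 presents as immediate the direction that genuinely needs the limit in $p$.

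\textbf{The fix is to swap the roles of the two bounds.}

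First direction: take $p$ with $\J(\lambda\cdot I_{\blt})=\J(\frac{\lambda}{p}\cdot I_p)$, and use $\J(\frac{\lambda}{p}\cdot I_p)=\I(\frac{\lambda}{p}\log|I_p|)_o$. The upper bound then yields $\J(\lambda\cdot I_{\blt})\subseteq\I(\lambda\Phi)_o$ for every $\lambda$. Hence $c_o^{\q}(\Phi)\ge\lct^{\q}(I_{\blt})$, with no limiting argument at all. This is shorter than the paper's treatment of this direction, which goes through \Cref{lem-Skoda.multiplier.ideal} and the auxiliary ideals $\b_L=\sum_j\a_j^{d_j}$.

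Second direction: the lower bound yields $\I(\lambda(1+B/p)\Phi)_o\subseteq\J(\lambda\cdot I_{\blt})$, where $B=\max_j b_j$. Given $\lambda<c_o^{\q}(\Phi)$, choose $p$ with $\lambda(1+B/p)<c_o^{\q}(\Phi)$. Then $\q\subseteq\J(\lambda\cdot I_{\blt})$, so $\lambda<\lct^{\q}(I_{\blt})$, and therefore $\lct^{\q}(I_{\blt})\ge c_o^{\q}(\Phi)$. This is exactly the paper's argument for this direction, and it is where your strict-inequality bookkeeping belongs.

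Openness is needed in neither direction.
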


\begin{proof}
    Let $c>0$. By \Cref{lem-log.Im.le.mvarphi}, we have for every $m\in\bbZ_+$,
    \[\J(cm \cdot I_{\blt})\supseteq \J(c\cdot I_m)=\I(c\log|I_m|)_o\supseteq \I\big(c(m+B)\Phi\big)_o,\]
    where $B=\max_{1\le j\le r} b_j$. If $cm=\lct^{\q}(I_{\blt})$, then
    \[\q\not\subseteq \J(cm\cdot I_{\blt}) \implies \q\not\subseteq \I\big(c(m+B)\Phi\big)_o,\]
    i.e., $\frac{m+B}{m}\cdot\lct^{\q}(I_{\blt})=c(m+B)\ge c_o^{\q}(\Phi)_o$. Hence, we get $\lct^{\q}(I_{\blt})\ge c_o^{\q}(\Phi)_o$ by taking $m\to \infty$.
    
    Take $L\in\bbZ_+$ sufficiently large, and let $d_1,\ldots,d_r\in\bbZ_+$ such that
    \[\frac{d_j}{L}\le \frac{1}{b_j}< \frac{d_j}{L}+\frac{1}{L}, \quad j=1,\ldots r.\]
    Then by \Cref{lem-Skoda.multiplier.ideal}, for every $c>0$, we have
    \[\J(cL \cdot I_{\blt})\subseteq\J(c\cdot \b_L)=\I(c\log|\b_L|)_o\eqqcolon \I(c\phi_L)_o,\]
    where $\b_L=\sum_{j=1}^r \a_j^{d_j}$ and $\phi_L\coloneqq \log|\b_L|$. Note that by the choice of $d_j$,
    \begin{equation*}
        \begin{aligned}
            c\phi_L=\max_{1\le j\le r} c d_j\log|\a_j|+O(1)&\le \max_{1\le j\le r} c\Big(\frac{1}{b_j}-\frac{1}{L}\Big)L\log|\a_j|+O(1)\\
            &\le c(L-B)\Phi+O(1).
        \end{aligned}
    \end{equation*}
    Thus, $\J(cL \cdot I_{\blt})\subseteq \I\big(c(L-B)\Phi\big)_o$ for all $c>0$. Hence, $c_o^{\q}(\Phi)\ge c(L-B)$ for $cL=\lct^{\q}(I_{\blt})$. Consequently, we get $c_o^{\q}(\Phi)\ge \lct^{\q}(I_{\blt})$ by letting $L\to \infty$.
    
    The proof is complete.
\end{proof}

Now we show \Cref{thm-finite.interp} can imply a result in the complex analytic case which was demonstrated in \cite{BGMY25}.

\begin{corollary}[{\cite[Theorem 1.2]{BGMY25}}]
    Let $R=\O_{\bbC^n,o}$, let $\a_1,\ldots,\a_r$ be proper ideals of $R$, and let $b_1,\ldots,b_r\in\bbR_{+}$. Then there exists a valuation on $R$ such that $v(\a_j)=b_j$ for $j=1,\ldots,r$ if and only if $\sigma(\log|\a_1\cdots\a_r|,\Phi)=\sum_{j=1}^r b_j$, where $\Phi=\log\big(|\a_1|^{1/b_1}+\cdots+|\a_r|^{1/b_r}\big)$.
\end{corollary}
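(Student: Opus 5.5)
The plan is to deduce the corollary directly from \Cref{thm-finite.interp} by means of \Cref{lem-analytic.case.rho-1.equal.sigma}. First, $R=\O_{\bbC^n,o}$ is an excellent regular local domain of equicharacteristic $0$ and dimension $n$, so \Cref{thm-finite.interp} applies to it. The standing assumption $\sum_{b_j>0}\a_j\neq R$ is automatic here. Indeed, every $b_j>0$ and every $\a_j$ is proper, so $\sum_j\a_j\subseteq\m\neq R$.

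Next I check that the filtration $I_{\blt}=\sum_j\big(\frac{1}{b_j}\cdot\a_j\big)_{\blt}$ appearing in \Cref{thm-finite.interp} is exactly the one used to define $\Phi$ in this section. By \Cref{thm-finite.interp}, the existence of a valuation $v$ on $R$ with $v(\a_j)=b_j$ for all $j$ is equivalent to
\[\vb(\a_1\cdots\a_r;I_{\blt})=\sum_{j=1}^r b_j.\]
Applying \Cref{lem-analytic.case.rho-1.equal.sigma} with $\a=\a_1\cdots\a_r$ gives $\vb(\a_1\cdots\a_r;I_{\blt})=\sigma(\log|\a_1\cdots\a_r|,\Phi)$. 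Substituting this into the previous condition yields the stated equivalence.

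The only point needing a moment's care is that $\log|\a_1\cdots\a_r|$ depends on a choice of generators. The relative type is insensitive to bounded perturbations, so $\sigma(\log|\a_1\cdots\a_r|,\Phi)$ is well defined. Likewise, $\Phi$ is well defined up to $O(1)$, which does not affect $\sigma$. There is no real obstacle, since all the analytic work was already done in \Cref{lem-log.Im.le.mvarphi} and \Cref{lem-analytic.case.rho-1.equal.sigma}. As a byproduct, \Cref{thm-finite.interp} also shows that the valuation can be chosen quasi-monomial.
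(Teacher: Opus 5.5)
Your proposal is correct and follows the paper's proof: you identify $\vb(\a_1\cdots\a_r;I_{\blt})$ with $\sigma(\log|\a_1\cdots\a_r|,\Phi)$ via \Cref{lem-analytic.case.rho-1.equal.sigma} and then invoke \Cref{thm-finite.interp}. Your extra checks (that $\O_{\bbC^n,o}$ meets the standing hypotheses of \Cref{thm-finite.interp}, and that $\sum_j\a_j\subseteq\m\neq R$) are the right ones to justify the application, and the paper leaves them implicit.
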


\begin{proof}
    Using \Cref{lem-analytic.case.rho-1.equal.sigma}, we get $\sigma(\log|\a_1\cdots\a_r|,\Phi)=\vb(\a_1\cdots\a_r;I_{\blt})$, so this corollary can be induced by \Cref{thm-finite.interp} directly.
\end{proof}

Moreover, \Cref{thm-infinite.interp} gives the following characterization of the existence of infinite valuative interpolation for the complex analytic case. 

\begin{corollary}
    Let $R=\O_{\bbC^n,o}$, let $(\a_j)_{j\ge 1}$ be a sequence of proper ideals of $R$, and let $(b_j)_{j\ge 1}$ be a sequence of positive real numbers. Assume $\bigcap_{j\ge 1}V(\a_j)=\{o\}$ near $o$. Then there exists a valuation $v$ on $R$ centered at $o$ with $A(v)<\infty$ satisfying $v(\a_j)=b_j$ for all $j\ge 1$ if and only if
    \[\sigma(\log|\a_1\cdots\a_r|,\Phi_r)=\sum_{j=1}^r b_j, \quad \forall\, r\ge 1,\]
    and
    \[\sup_{r\ge 1}\sup_{k\in\bbN}\Big(c_o^{\a_1^k\cdots\a_r^k}(\Phi_r)-k\sum_{j=1}^r b_j\Big)<\infty,\]
    where $\Phi_r=\log\big(|\a_1|^{1/b_1}+\cdots+|\a_r|^{1/b_r}\big)$.
\end{corollary}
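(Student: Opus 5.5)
The plan is to reduce this corollary to \Cref{thm-infinite.interp} with $R=\O_{\bbC^n,o}$. That ring is an excellent regular local domain of equicharacteristic $0$, with maximal ideal $\m$. The analytic quantities will be translated into algebraic ones using \Cref{lem-analytic.case.rho-1.equal.sigma} and \Cref{lem-monomial.case}, applied for each fixed $r$ to the finite family $\a_1,\ldots,\a_r$ with exponents $b_1,\ldots,b_r$.

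First I will check the hypotheses of \Cref{thm-infinite.interp}.

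\begin{enumerate}
\item The $b_j$ are positive and every $\a_j$ is proper, hence nonzero; if some $\a_j$ were zero, $v(\a_j)=\infty\neq b_j$ and we could discard that case.
\item The condition $\bigcap_{j\ge1}V(\a_j)=\{o\}$ near $o$ must give $\sqrt{\sum_{j\ge 1}\a_j}=\m$. Since $R$ is noetherian, $\sum_{j\ge1}\a_j=\sum_{j\le r_0}\a_j$ for some finite $r_0$. Its zero set is the germ $\bigcap_{j\le r_0}V(\a_j)=\{o\}$. The Rückert Nullstellensatz then gives $\sqrt{\sum_{j\le r_0}\a_j}=\m$.
\end{enumerate}

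This Nullstellensatz step is the only genuinely analytic input beyond the two lemmas, and it is where I would be careful. The germ-level intersection of infinitely many sets should be read through the finitely generated sum ideal, whose zero germ is exactly the intersection, so the noetherian reduction above makes this legitimate.

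Next, fix $r\ge1$ and set $\Phi_r$ and $I^{(r)}_{\blt}$ as in the statements.

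\begin{enumerate}
\item By \Cref{lem-analytic.case.rho-1.equal.sigma}, $\sigma(\log|\a_1\cdots\a_r|,\Phi_r)=\vb_{I^{(r)}_{\blt}}(\a_1\cdots\a_r)$.
\item By \Cref{lem-monomial.case} with $\q=\a_1^k\cdots\a_r^k$, which is nonzero, $c_o^{\a_1^k\cdots\a_r^k}(\Phi_r)=\lct^{\a_1^k\cdots\a_r^k}(I^{(r)}_{\blt})$ for every $k\in\bbN$.
\end{enumerate}

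With these identifications, the two analytic conditions in the corollary are exactly the two conditions in statement (2) of \Cref{thm-infinite.interp}. Applying (1)$\Leftrightarrow$(2) of that theorem then gives the result. Valuations on $R$ centered at $\m$ are precisely those centered at $o$, so statement (1) of the theorem matches the corollary's conclusion.
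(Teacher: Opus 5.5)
Your proposal is correct and follows the paper's proof. For each $r$ you identify $\sigma(\log|\a_1\cdots\a_r|,\Phi_r)$ and $c_o^{\a_1^k\cdots\a_r^k}(\Phi_r)$ with $\vb_{I^{(r)}_{\blt}}(\a_1\cdots\a_r)$ and $\lct^{\a_1^k\cdots\a_r^k}(I^{(r)}_{\blt})$ via \Cref{lem-analytic.case.rho-1.equal.sigma} and \Cref{lem-monomial.case}, then invoke \Cref{thm-infinite.interp}; your noetherian reduction plus the R\"uckert Nullstellensatz makes explicit the hypothesis $\sqrt{\sum_j\a_j}=\m$, which the paper leaves to the reader (one small slip: ``proper'' does not imply ``nonzero'', since the zero ideal is proper, but nonvanishing of the $\a_j$ is an implicit standing assumption anyway, needed even to define $\log|\a_j|$).
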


\begin{proof}
    Using \Cref{lem-analytic.case.rho-1.equal.sigma} and \Cref{lem-monomial.case}, we get for every $k\in\bbN$ and $r\ge 1$,
    \begin{equation*}
            \sigma(\log|\a_1\cdots\a_r|,\Phi_r)=\vb(\a_1\cdots\a_r;I^{(r)}_{\blt}) \quad  \text{and} \quad c_o^{\a_1^k\cdots\a_r^k}(\Phi_r)=\lct(\a_1^k\cdots\a_r^k; I_{\blt}^{(r)}),
    \end{equation*}
    where $I^{(r)}_{\blt}\coloneqq \Big(\frac{1}{b_1}\cdot \a_{1}\Big)_{\blt}+\cdots+\Big(\frac{1}{b_r}\cdot \a_{r}\Big)_{\blt}$. Hence, this corollary can be induced by \Cref{thm-infinite.interp} directly.
\end{proof}

\end{document}